\documentclass[12pt, oneside, reqno]{amsart}

\usepackage[T1]{fontenc}
\usepackage[margin=2.5cm]{geometry}
\usepackage{todonotes}
\usepackage{amsmath,amssymb,amsthm,amsfonts,amsaddr}
\usepackage{mathtools,mathrsfs}
\usepackage{graphicx}
\usepackage{hyperref}
\usepackage{xcolor}
\usepackage{tikz, tikz-cd}
\usepackage{pgfplots}
\usepackage{pgfplotstable}
\usepackage{subcaption}
\usepackage{booktabs}
\synctex=1


\newtheorem{theorem}{Theorem}[section]
\newtheorem{lemma}[theorem]{Lemma}

\newtheorem{proposition}[theorem]{Proposition}

\theoremstyle{remark}
\newtheorem{remark}[theorem]{Remark}
\newtheorem{example}[theorem]{Example}
\newtheorem{assumption}[theorem]{Assumption}

\title[Estimators for eigenspace error]{
  Reliable eigenspace error estimation using source error estimators}
\author{Jay Gopalakrishnan \and Gabriel Pinochet-Soto}

\dedicatory{Dedicated to the memory of Professor Raytcho Lazarov}

\pgfplotsset{compat=1.18}

\pgfplotsset{
    compat=newest,
    table/col sep=comma,
    width=0.75\textwidth,
    height=0.75\textwidth,
    grid=both,
    legend pos=north east,
    minor tick num=1,
    major grid style={line width=.2pt,draw=gray!30},
    minor grid style={line width=.2pt,draw=gray!15},
}


\pgfplotscreateplotcyclelist{three-by-three}{%
    {orange!90!black, mark=diamond, solid, thick},
    {orange!90!black, mark=triangle, solid, thick},
    {orange!90!black, mark=square, solid, thick},
    {blue!90!black, mark=diamond, dashed, thick},
    {blue!90!black, mark=triangle, dashed, thick},
    {blue!90!black, mark=square, dashed, thick},
    {purple!90!black, mark=diamond, dotted, thick},
    {purple!90!black, mark=triangle, dotted, thick},
    {purple!90!black, mark=square, dotted, thick},
}

\pgfplotscreateplotcyclelist{three-by-two}{%
    {red, mark=diamond, solid, thick},
    {blue, mark=triangle, solid, thick},
    {orange, mark=square, solid, thick},
    {red!80!black, mark=diamond*, thick},
    {blue!80!black, mark=triangle*, thick},
    {orange!80!black, mark=square*, thick},
}

\pgfplotscreateplotcyclelist{two-by-two}{%
    {orange!90!black, mark=diamond, solid, thick},
    {orange!90!black, mark=triangle, solid, thick},
    {blue!90!black, mark=diamond, dashed, thick},
    {blue!90!black, mark=triangle, dashed, thick},
}

\pgfplotscreateplotcyclelist{one-by-two}{%
    {orange, mark=square, solid, thick},
    {orange!80!black, mark=square*, thick},
}

\pgfplotscreateplotcyclelist{two-in-one}{%
    {orange!90!black, mark=diamond, solid, ultra thick},
    {blue!90!black, mark=square, densely dotted, thick},
}


\newcommand{\comment}[1]{}

\newcommand{\ii}{\mathfrak{i}}
\newcommand{\diff}{\mathop{}\!\mathrm{d}}

\newcommand{\tForAll}{\text{ for all }}
\newcommand{\tAnd}{\text{ and }}

\renewcommand{\hat}{\widehat}



\DeclareMathOperator{\diver}{div}

\DeclareMathOperator{\ran}{ran}
\DeclareMathOperator{\dom}{dom}

\DeclareMathOperator{\spn}{span}

\DeclareMathOperator{\dist}{dist}

\DeclareMathOperator{\gap}{gap}
\DeclareMathOperator{\osc}{osc}


\newcommand{\RR}{\mathbb{R}}
\newcommand{\CC}{\mathbb{C}}






\DeclarePairedDelimiter{\abs}{\lvert}{\rvert}
\DeclarePairedDelimiter{\norm}{\lVert}{\rVert}

\DeclarePairedDelimiter{\set}{\{}{\}}
\DeclarePairedDelimiter{\parens}{(}{)}















\newcommand{\nquad}{N}




\newcommand{\vL}{\varLambda}
\newcommand{\vT}{\varTheta}
\newcommand{\vU}{\varUpsilon}
\newcommand{\vG}{\varGamma}

\newcommand{\dz}{\diff{z}}
\newcommand{\Srh}{S_{r, h}}
\newcommand{\Prh}{P_{r, h}}
\newcommand{\EE}{\mathcal{E}}
\newcommand{\Ac}{\mathcal{A}}
\newcommand{\Af}{{A}}
\newcommand{\Hc}{\mathcal{H}}
\newcommand{\og}{\omega}
\newcommand{\veps}{\varepsilon}
\newcommand{\vpi}{\varPi}
\newcommand{\om}{\Omega}
\newcommand{\oh}{\Omega_h}
\newcommand{\Ho}{\mathring{H}}
\newcommand{\ip}[1]{\langle{#1}\rangle}
\newcommand{\Cregz}{C_{z,\text{reg}}}
\DeclareFontFamily{U}{boondoxuprscr}{\skewchar \font =45}
\DeclareFontShape{U}{boondoxuprscr}{m}{n}{
    <-> BOONDOXUprScr-Regular}{}
\DeclareFontShape{U}{boondoxuprscr}{b}{n}{
    <-> BOONDOXUprScr-Bold}{}
\newcommand\hcal{\text{\fontencoding{U}\fontfamily{boondoxuprscr}\fontshape{n}\selectfont h}}
\newcommand\scal{\text{\fontencoding{U}\fontfamily{boondoxuprscr}\fontshape{n}\selectfont s}}

\begin{document}

\begin{abstract}
    We introduce a framework for repurposing error estimators for source
  problems to compute an estimator for the gap between eigenspaces and their
  discretizations. Of interest are eigenspaces of finite
  clusters of eigenvalues of unbounded nonselfadjoint linear operators
  with compact resolvent.  Eigenspaces and eigenvalues of rational
  functions of such operators are studied as a  first step.
  Under an assumption of convergence of resolvent approximations in
  the operator norm and an assumption on global reliability of source
  problem error estimators, we show that the gap in eigenspace
  approximations can be bounded by a globally reliable and computable
  error estimator. Also included are applications of the theoretical
  framework to first-order system least squares
  (FOSLS) discretizations and discontinuous Petrov-Galerkin (DPG)
  discretizations, both yielding new estimators for the error gap.
  Numerical experiments with a selfadjoint model problem and with a
  leaky nonselfadjoint  waveguide eigenproblem show that adaptive algorithms
  using the new estimators give refinement patterns
  that target the cluster as a whole instead of  individual eigenfunctions.


\end{abstract}

\maketitle

\section{Introduction} 

When clusters of some physically relevant eigenvalues are to be approximated,
eigensolvers  often use
functions on  the complex plane to transform such eigenvalues to a dominant set of
eigenvalues of the rational function of the operator. Shifted inverse
iteration is perhaps the simplest example. Contour
integral iterative solvers provide more involved examples. Such eigensolvers
typically require many applications of the resolvent operator, which amounts
to solving ``source problems.''  When using
finite element methods for such source problems, one often has ready
access to {\em a posteriori} error estimators for source problems.
The goal of this work
is to repurpose such source problem error estimators to give error
indicators for the gap between discrete and exact eigenspaces of the
targeted eigenvalue cluster.

Error estimation for eigenvalue problems is well-studied.  In the
extensive literature on this topic, one finds estimators for
least-squares discretizations~\cite{BerBof2022-1, BerBof2020-1},
guaranteed bounds for eigenvalue clusters of selfadjoint
problems~\cite{CanEtAl2017-1, CanEtAl2020-1}, homotopy methods for
nonselfadjoint problems~\cite{CarstensenEtAl2011-1}, Crouzeix-Raviart
discretizations for the Laplace operator~\cite{CartensenEtAl2015-1,%
CarGed2014-1}, mixed methods~\cite{Boffi2010-1}, DPG
method~\cite{BerBofSch2023-1}, skeletal finite
element methods~\cite{CarZhaZha2020-1}, oscillation-free
discretizations for symmetric operators~\cite{CarGed2011-1}, dual
weighted residual methods~\cite{HeuRan2001-1, RanWesWol2010-1},
explicit error estimators~\cite{Lar2000-1}, and works targeting
cluster error estimation~\cite{CanEtAl2020-1, 
  GianiEtAl2012-1, GianiEtAl2016-1}. Source problem error estimators
have of course been studied even longer, see e.g., \cite{AinOde1997-1, BabRhe1979-1,
  BabRhe1978-1, DieJosHue1998-1, HakNeuOva2017-1, NagZhaZho2006-1, ZieZhu1992-1,
  ZieZhu1992-2}.  The goal of this
paper is to make it easy to reutilize source problem error estimators
within an eigensolver.  While our basic idea for eigenspace error
estimation is not radically different from the cited previous
literature, our focus on unbounded nonselfadjoint operators within the
theory presented here is new, and so is the practical perspective of repurposing
source problem error estimators within often-used spectral mappings by
rational functions.

In various practical scenarios, such as in our prior work on eigenmodes of
microstructured optical fibers~\cite{GopGroPinVan2025-1}, the need
arises to numerically approximate the eigenspace corresponding to a
finite cluster $\vL$ of physically interesting eigenvalues in the
spectrum of an unbounded differential operator $\Ac$.  These
eigenvalues need neither be dominant nor lie near any extremity of
the spectrum.  Consider eigensolvers that target  such clusters
using a rational function $r(z)$ of a complex variable~$z$, applied to
the operator $\Ac$. The $r(\cdot)$ is usually designed to make
$r(\vL)$ into a dominant set of eigenvalues of $r(\Ac)$ so that
standard iterative techniques can capture them easily. For example,
the subspace iteration $E_{(\ell)} = r(\Ac) E_{(\ell-1)}$ for
$\ell=1,2,\dots,$ starting with some initial space $E_{(0)}$, can be
proven to converge to the wanted eigenspace under suitable
conditions~\cite{GopGruOva2020-1}.
Of course, in practice, we must apply such eigensolvers to
finite-dimensional approximations of $r(\Ac)$, such as finite element
approximations, which then capture a discretized eigenspace. We are
interested in computable error estimators for such discrete
eigenspaces that can be used as a basis for automatic adaptive
algorithms. A key difference when compared to the standard adaptivity
for source problems is that here the desired output is a {\em space},
a multidimensional eigenspace, rather than a single solution function.
Accordingly, the reliability of our error estimator is established using the gap metric which gives a notion of distance between spaces. The main result,
Theorem~\ref{thm:espace-gap-estimator}, bounds the gap between exact and approximate eigenspace by a computable error estimator.

A prominent example of the above-mentioned type of rational functions
(that make the targeted portion of the spectrum into a dominant one)
is the so-called ``FEAST method''~\cite{GopGruOva2020-1, KesPolPet2016-1, PetPol2014-1, Pol2009-1}.  It is a filtered subspace
iteration, i.e., a subspace iteration using $r(\Ac)$ where $r(\cdot)$
is viewed as a filter that isolates wanted portions of the spectrum.
In such examples, it is essential to understand how the eigenspaces of
$r(\Ac)$ relate to those of $\Ac$ since the subspace iteration
captures the former.  This requires us to tailor the spectral mapping
theorem~\cite{HilPhi1996-1} to the case of a rational function of a
linear operator.  We provide elementary arguments clarifying how
eigenspaces are transformed under $r(\cdot)$ by building on a result of
Yamamoto~\cite{Yam1971-1} (which he proves very simply using
B\'ezout's identity). This first step (completed in the next section)
then clarifies what assumptions we must place on the rational filters
in the remainder of the analysis for nonselfadjoint~$\Ac$.  We then
complete a general
framework for the design of source-problem error estimators that can
be used for eigenspace error estimation. The assumptions in the
framework are verified for two finite element discretizations of a
model problem.

This work is organized as follows.
In Section~\ref{sec:rational-functions}, we concentrate our attention on the
spectral mapping theorem for rational functions of a linear operator.
Section~\ref{sec:eigenspace-error-estimate} is devoted to the eigenspace
error estimation problem from a filtered subspace iteration.
In this section, we introduce the error representation operator, which is the
key to the integration of source-problem error estimators with eigenspace
error estimation.
In Subsections~\ref{ssec:FOSLS} and~\ref{ssec:DPG}, we present two concrete
applications of the error representation operator to the first-order system
least squares (FOSLS)~\cite{CaiEtAl1994-1, CaiEtAl1997-1}
and the discontinuous Petrov-Galerkin (DPG)~\cite{DemGop2013-1, DemGop2017-1, DemGop2025-1}
discretizations of the finite element method, respectively.
Both these methods, due to their nature as a residual
minimization method, are able to provide intrinsic residual-based error estimators.
In Section~\ref{sec:numerical-results}, we present numerical results
that illustrate the performance of the proposed error estimators for a
model problem.

\section{Rational functions of an operator} \label{sec:rational-functions}

We study how the generalized eigenspaces of a rational function of a
linear operator are related to those  of the operator.
The main result of this section is Lemma~\ref{lem:rational-gen-eigenspaces} which shows how algebraic eigenspaces are transformed by rational functions of the operator.

Let \(\Hc\) be a complex ($\CC$) Hilbert space, endowed with norm
\(\norm{\cdot}_\Hc\). Let $\Ac : \dom (\Ac) \subseteq \Hc \to \Hc$ be a
closed linear operator (that is not assumed to be bounded or
selfadjoint).  We use $\sigma(\cdot)$ and $\rho(\cdot)$ to denote the
spectrum and resolvent set of an operator, respectively.  Recall that
a complex number $\lambda$ in the spectrum $\sigma(\Ac)$ is called an
eigenvalue of $\Ac$ if $\ker( \lambda - \Ac)$ is
nontrivial. Throughout, in expressions like $\lambda - \Ac$, the
complex number $\lambda$ also serves to denote the linear operator
that scales vectors by it, and
$\ker(\cdot)$ denotes the null space.  

Consider
complex-valued rational functions $r$ of the form
\begin{equation}
    r(z) =  \og_0 +  \sum_{j=1}^\nquad 
    \sum_{i=1}^{\nu_j}\frac{\omega_{j, i}}{(z_j - z)^i}
    \label{eq:feast-filter}
\end{equation}
for some $\og_0, \omega_{j,i} \in \CC$, some distinct $z_j \in
\rho(\Ac)$, and some positive integers $\nu_j$, $j=1,\dots,N$.
Substituting $z$ by $\Ac$, we define the bounded
linear operator $r(\Ac)$ on $\Hc$, namely
\begin{gather*}
  r(\Ac) =
  \og_0 +
  \sum_{j=1}^\nquad \sum_{i=1}^{\nu_j}\omega_{j, i} R(z_j)^i.
\end{gather*}
Here and throughout, for each complex number $z$ in the resolvent set, we denote
by $R(z)$ the resolvent operator of $\Ac$ that maps from $\Hc$ into $\dom(\Ac)$,
given by $R(z) f= (z - \Ac)^{-1}f$ for any $f \in \Hc$.

A result of~\cite{Yam1971-1} shows that for any set of distinct
complex numbers $\alpha_i$ and positive integers $n_i$, $i=1,\dots,s$,
the equality
\begin{equation}
  \label{eq:Yamamoto-decomposition}
    \ker \left( \prod_{i=1}^s (\alpha_i - \Ac)^{n_i} \right)
  = \bigoplus_{i=1}^s \ker ( \alpha_i - \Ac)^{n_i},
\end{equation}
holds, where the right hand side is a direct sum, when
$\dom (\Ac) = \Hc$. It was noted in \cite{GonzaOniev86}
that~\eqref{eq:Yamamoto-decomposition} also holds when $\dom (\Ac)$ is
smaller than $\Hc$ and equals the domain of polynomials in $\Ac$ of
degree $d= \sum_{i=1}^s n_i$. We use this result to study how
generalized eigenvectors of $r(\Ac)$ are related to those of $\Ac$
(for unbounded $\Ac$ with general domains).

Let $\dom(\Ac^1) = \dom(\Ac)$, and recursively define 
$\dom(\Ac^n ) = \{ x \in \dom (\Ac) : \Ac x \in \dom
(\Ac^{n-1})\}$ for any $n > 1$.
Then set
\begin{gather*}
  \dom (\Ac^\infty)= \bigcap_{n=1}^\infty \dom (\Ac^n).
\end{gather*}
The generalized eigenspace of the unbounded $\Ac$ corresponding to an 
eigenvalue $\lambda$ is
\[
  E_\lambda^\infty(\Ac) = \bigcup_{n=1}^\infty \{ u \in \dom (\Ac^\infty) :
  (\Ac-\lambda)^n u = 0   \}. 
\]
We now provide an elementary argument to  relate this space  to the generalized
eigenspace of the bounded
linear operator $r(\Ac): \Hc \to \Hc$ corresponding to an eigenvalue
$\mu \in \CC$, namely
\[
  E_\mu(r(\Ac)) =
  \bigcup_{n=1}^\infty \{ u \in \Hc: ( r(\Ac) -\mu )^n u = 0\}.
\]
Let
\begin{equation}
  \label{eq:r-mu-inv}
  r^{-1}_\mu = \{ \lambda \in \CC \setminus \{ z_1, \dots, z_N\}:  \,
  \mu = r(\lambda)\}  
\end{equation}
for any $0 \ne \mu \in \CC$.  This set of inverse images of $\mu$ under
$r$ is used in the next lemma.

\begin{lemma}
  \label{lem:rational-gen-eigenspaces}
  For any $\mu\in \CC$ that does not coincide with  $\og_0$,
  \begin{equation}
    \label{eq:13}    
    E_\mu(r(\Ac)) = \bigoplus_{\lambda \,\in\, r^{-1}_\mu } E_\lambda^\infty (\Ac).
  \end{equation}
\end{lemma}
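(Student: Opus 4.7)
The plan is to reduce the statement to the Yamamoto decomposition~\eqref{eq:Yamamoto-decomposition} by factoring $r(\Ac) - \mu$ as a product of a polynomial in $\Ac$ and an inverse polynomial. The key observation is that since $\mu \ne \og_0$, the rational function $r(z) - \mu$ tends to the nonzero value $\og_0 - \mu$ at infinity; putting it over the common denominator $Q(z) = \prod_{j=1}^{\nquad}(z_j - z)^{\nu_j}$ yields $r(z) - \mu = P(z)/Q(z)$ with $P$ a polynomial of the same degree $d = \sum_j \nu_j$. Since $r(z) - \mu$ has a pole at each $z_j$, no $z_j$ is a root of $P$, so the roots of $P$ coincide exactly with $r^{-1}_\mu$, giving the factorization $P(z) = (\og_0 - \mu) \prod_{\lambda \in r^{-1}_\mu}(\lambda - z)^{m_\lambda}$ for positive integers $m_\lambda$ summing to $d$.

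Next, I would lift this factorization to operators. The operator $Q(\Ac)^{-1} = \prod_{j=1}^{\nquad} R(z_j)^{\nu_j}$ is bounded on $\Hc$ with range in $\dom(\Ac^d)$, where $P(\Ac)$ is defined. Using commutativity of resolvents at distinct points (via the resolvent identity) and of polynomial factors in $\Ac$ on appropriate domains, I would establish $r(\Ac) - \mu = P(\Ac)Q(\Ac)^{-1}$ and, iterating, $(r(\Ac) - \mu)^n = P(\Ac)^n Q(\Ac)^{-n}$ as bounded operators on $\Hc$ for every $n \ge 1$. Thus the equation $(r(\Ac) - \mu)^n u = 0$ becomes $P(\Ac)^n v = 0$ for $v = Q(\Ac)^{-n} u \in \dom(\Ac^{dn})$, placing us precisely in the setting where~\eqref{eq:Yamamoto-decomposition} applies.

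With this setup, the inclusion $\bigoplus_\lambda E_\lambda^\infty(\Ac) \subseteq E_\mu(r(\Ac))$ follows by noting that each resolvent $R(z_j)$ commutes with $\lambda - \Ac$ (since $z_j \ne \lambda$) and preserves $E_\lambda^\infty(\Ac)$; thus $Q(\Ac)^{-n} u$ remains in $E_\lambda^\infty(\Ac)$, and the factor $(\lambda - \Ac)^{m_\lambda n}$ in $P(\Ac)^n$ annihilates it for $n$ large enough. Conversely, if $(r(\Ac) - \mu)^n u = 0$, applying~\eqref{eq:Yamamoto-decomposition} to $P(\Ac)^n$ on $\dom(\Ac^{dn})$ decomposes $v = \sum_\lambda v_\lambda$ with $v_\lambda \in \ker(\lambda - \Ac)^{m_\lambda n} \subseteq E_\lambda^\infty(\Ac)$; then $u = Q(\Ac)^n v = \sum_\lambda Q(\Ac)^n v_\lambda$ with each summand staying in its generalized eigenspace by polynomial invariance. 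Directness of the sum in~\eqref{eq:13} then reduces to the standard fact that generalized eigenspaces of $\Ac$ at distinct eigenvalues intersect trivially.

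The main obstacle will be the careful domain bookkeeping needed to justify the operator identity $(r(\Ac)-\mu)^n = P(\Ac)^n Q(\Ac)^{-n}$ for unbounded $\Ac$: verifying that all compositions are defined on $\Hc$, that $Q(\Ac)^{-n}$ maps into $\dom(\Ac^{dn})$ where $P(\Ac)^n$ lives, and that pairwise commutativity among resolvents and polynomial factors persists on these domains. A secondary but essential technical point is the invariance of each $E_\lambda^\infty(\Ac)$ under $R(z_j)$ for $z_j \ne \lambda$, which is what makes the ``$\supseteq$'' direction work cleanly and ensures that $Q(\Ac)^n v_\lambda$ in the converse step remains in the correct generalized eigenspace.
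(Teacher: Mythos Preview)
Your plan follows the same core route as the paper: write $r(z)-\mu = P(z)/Q(z)$ with $Q(z)=\prod_j(z_j-z)^{\nu_j}$, factor $P$, and invoke the Yamamoto decomposition~\eqref{eq:Yamamoto-decomposition}. The one structural difference is in how the unbounded-operator domain issues are handled. The paper first proves, via a bootstrap using $\mu\ne\og_0$ (their equations leading to $E_\mu^\infty(r(\Ac))=E_\mu(r(\Ac))$), that every generalized eigenvector of $r(\Ac)$ already lies in $\dom(\Ac^\infty)$, after which all factors commute freely. You instead absorb the domain issue into the substitution $v=Q(\Ac)^{-n}u\in\dom(\Ac^{dn})$ and then push $Q(\Ac)^n$ back at the end. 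Both devices work; yours is arguably more direct but, as you note, requires the commutation identity $(r(\Ac)-\mu)^n=P(\Ac)^nQ(\Ac)^{-n}$ to be checked carefully on $\Hc$, whereas the paper's bootstrap lets all such manipulations take place inside $\dom(\Ac^\infty)$ from the outset.

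There is one genuine oversight. Your claim that ``since $r(z)-\mu$ has a pole at each $z_j$, no $z_j$ is a root of $P$'' is not quite right: the form~\eqref{eq:feast-filter} does not assume $\omega_{j,\nu_j}\ne 0$, so the pole of $r-\mu$ at $z_j$ may have order $m_j<\nu_j$, in which case $P$ vanishes at $z_j$ to order $\nu_j-m_j$. The paper treats this explicitly (``the postponed case where $\lambda_\ell=z_i$''): either cancel the common factor from numerator and denominator, or observe that $z_j\in\rho(\Ac)$ forces $E_{z_j}^\infty(\Ac)=\{0\}$, so such roots contribute nothing to the direct sum. Your argument goes through once you insert this correction; without it, the asserted factorization $P(z)=(\og_0-\mu)\prod_{\lambda\in r_\mu^{-1}}(\lambda-z)^{m_\lambda}$ and the identification of the roots of $P$ with $r_\mu^{-1}$ can fail.
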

  
\begin{proof}
  Letting 
  \[
    E_\mu^\infty(r(\Ac)) =
    \bigcup_{n=1}^\infty \{ u \in \dom (\Ac^\infty) : (r(\Ac) - \mu )^n u = 0\},
  \]
  we claim that
  \begin{equation}
    \label{eq:9}
    E_\mu^\infty(r(\Ac)) = E_\mu(r(\Ac)). 
  \end{equation}
  Obviously $E_\mu^\infty(r(\Ac)) \subseteq E_\mu(r(\Ac)).$ To prove
  the reverse inclusion, we begin by proving that 
  \begin{equation}
    \label{eq:10}
    \{ u \in \Hc: \; (r(\Ac) - \mu) u \in \dom (\Ac^\infty) \}
    \subseteq  \dom (\Ac^\infty).
  \end{equation}
  Indeed, given any $u \in \Hc$, if $v = (r(\Ac) - \mu ) u$ is in
  $\dom(\Ac^\infty)$, then
  \begin{equation}
    \label{eq:11}
    u = \frac{1}{\og_0-\mu}\left( v -
      \sum_{j=1}^\nquad \sum_{i=1}^{\nu_j}\omega_{j, i} R(z_j)^i
      u\right)
  \end{equation}
  must be in $\dom (\Ac)$ since the $R(z_j)$ maps into
  $\dom (\Ac)$ and $v$ is in $\dom (\Ac)$. Since $z_j \in \rho(\Ac)$ and $u \in\dom(\Ac)$, the resolvent commutes with the operator when applied to $u$, i.e.,
  $R(z_j) \Ac u = \Ac  R(z_j) u$.
  Both sides of this equality are in $\dom (\Ac)$, so repeatedly commuting,
  we find that
  $R(z_j)^i \Ac u = \Ac  R(z_j)^i u$ for higher powers $i$ as well.  
  Hence, applying $\Ac$ to both sides of~\eqref{eq:11},
  \begin{equation}
    \label{eq:12}
    \Ac u = \frac{1}{\og_0 - \mu}\left( \Ac v -
      \sum_{j=1}^\nquad \sum_{i=1}^{\nu_j}\omega_{j, i} R(z_j)^i
      \Ac u\right).
  \end{equation}
  This shows  that $\Ac u \in \dom (\Ac)$, i.e., $u \in \dom
  (\Ac^2)$. Continuing this argument, we find that
  $u \in \dom (\Ac^\infty)$, thus proving~\eqref{eq:10}.

  Next, observe that for any integer $k \ge 1$ and any $u \in \Hc$, if
  $(r(\Ac) - \mu)^ku$ is in $\dom (\Ac^\infty)$, then $u$ must also be
  in $ \dom(\Ac^\infty)$. When $k=1$, we just proved this
  in~\eqref{eq:10}. When $k>1$, we use induction: since
  $(r(\Ac) - \mu) (r(\Ac) - \mu)^{k-1}u$ is in $\dom (\Ac^\infty)$,
  by~\eqref{eq:10} we conclude that $(r(\Ac) - \mu)^{k-1}u$ is in
  $\dom (\Ac^\infty)$, so by induction hypothesis, $u$ is in
  $\dom (\Ac^\infty)$.  In particular, if $u \in \Hc$ and
  $(r(\Ac) - \mu)^ku = 0$, then $u$ must be in $\dom
  (\Ac^\infty)$. Having thus shown that
  $E_\mu^\infty(r(\Ac)) \supseteq E_\mu(r(\Ac))$, the proof
  of~\eqref{eq:9} is complete.

  It now suffices to prove~\eqref{eq:13} with $E_\mu(r(\Ac))$ replaced
  by $E_\mu^\infty(r(\Ac))$. Using polynomials 
  \[
    q(z) = \prod_{j=1}^N (z_j - z)^{\nu_j}, \qquad
    p(z) = \sum_{j=1}^N \sum_{i=1}^{\nu_j}
    \og_{j,i} \prod_{k \ne j} (z_k - z)^{\nu_k - i},
  \]
  we may rewrite the rational function as $r(z) = p(z) / q(z)$. Then
  \begin{equation}
    \label{eq:15}
    r(z) - \mu = \frac{p(z) - \mu \,q(z)}{ q(z)}.
  \end{equation}
  Factorizing the numerator, we obtain finitely many distinct complex
  numbers $\lambda_\ell$, $\ell=1,\dots, L,$
  and associated integers $n_\ell$ such that
  \begin{equation}
    \label{eq:15.1}
    p(z) - \mu \,q(z) = 
    \alpha \prod_{\ell=1}^L    (\lambda_\ell - z)^{n_\ell}
\end{equation}
  for some scaling factor $\alpha$. 

  Let us continue supposing that $\{ \lambda_1, \dots \lambda_L\}$
  does not intersect $\{ z_1, \dots, z_N\}$.  (The intersecting case
  is dealt with at the end.) Then, from \eqref{eq:15} and~\eqref{eq:15.1},
  it is clear
  that $r(z) - \mu=0$ if and only if $z = \lambda_\ell$ for some
  $\ell$. In other words, the inverse image of $\mu$ equals 
  \[
    r^{-1}\{ \mu \} = \{ \lambda_1, \dots \lambda_L\}.
  \]
  Let $u \in \dom (\Ac^\infty)$.  Since the factors of $r(\Ac) - \mu$
  commute in $\dom (\Ac^\infty)$,
  \[
    (r(\Ac) - \mu)^k u = q (\Ac)^{-k} \prod_\ell (\lambda_\ell - \Ac)^{n_\ell k} u. 
  \]
  The factor $q (\Ac)^{-k}$ is a composition of powers of injective
  resolvent operators since $z_j \in \rho(\Ac)$. Hence, any
  $u \in \dom (\Ac^\infty)$ satisfies $(r(\Ac) - \mu)^k u =0$ if and
  only if
  \[
    \prod_\ell (\lambda_\ell - \Ac)^{n_\ell k} u = 0,
  \]
  which, by the argument of~\cite{Yam1971-1} leading
  to~\eqref{eq:Yamamoto-decomposition}, is equivalent to
  \[
    u \in \bigoplus_{\ell =1}^L \ker( \lambda_\ell - \Ac)^{n_\ell k}.
  \]
  This proves that
  \begin{equation}
    \label{eq:16}
        E_\mu^\infty(r(\Ac)) =
    \bigoplus_{\lambda \,\in\, r^{-1} \{ \mu\} } E_\lambda^\infty (\Ac)      
  \end{equation}
  holds for the current case.

  To finish the proof, consider the postponed case where there
  are integers 
  $\ell$ and $i$ such that $\lambda_\ell = z_i$. If $n_\ell \le  \nu_i$, then we may 
  cancel off the common factor $(z_i - z)^{n_\ell}$
  from the numerator and denominator of~\eqref{eq:15} and reuse the above arguments with
  $\tilde q(z) = q(z)/ (z_i - z)^{n_\ell}$, a polynomial of lower degree,
  in place of $q(z)$.
  Then,   $r^{-1} \{ \mu \}$ no longer contains the associated~$\lambda_\ell$,
  so  $\lambda_\ell$ no longer 
  contributes to the sum of~\eqref{eq:16}. If $n_\ell>\nu_i$, then the
  numerator in~\eqref{eq:15} will still contain a factor of
  $\lambda_\ell -z$, so $r^{-1}\{ \mu\}$ continues to contain
  $\lambda_\ell$. But $E_{\lambda_\ell}(\Ac)$ is trivial, since
  $\lambda_\ell=z_i$ is in the resolvent
  set of $\Ac$. Hence it only contributes the trivial space to the sum
  in~\eqref{eq:16} and can be removed from the sum.
\end{proof}

\begin{example}[A finite-dimensional case] \label{ex:3x3}
  Consider a linear operator  $\Ac$ on $\Hc = \CC^3$  
  represented by the matrix 
  \[
    \Ac =
    \begin{bmatrix*}[r]
      -\frac{1}{2} & 0 & 0 \\
      0 & \frac{1}{2} & 1 \\
      0 & 0 & \frac{1}{2}
    \end{bmatrix*}
  \]
  in Jordan canonical form.  Here, abusing notation, we have
  identified the linear operator with its matrix representation
  obtained using  the standard unit basis vectors denoted by
  $e_1, e_2,$ and $e_3$.
  The eigenvalues of \(\Ac\) are \(-\frac{1}{2}\) and
  \(\frac{1}{2}\), with algebraic multiplicities \(1\) and \(2\),
  respectively.  Their generalized eigenspaces are
  $E^\infty_{-1/2} = \spn(e_1)$ and
  $E^\infty_{-1/2} = \spn(e_2, e_3)$.  Define the rational function
  \begin{equation}
    \label{eq:feast-N2}
    r(z) = -\frac{1}{z^2 + 1}
    = \frac{\ii}{2(z - \ii)} + \frac{-\ii}{2(z + \ii)}.    
  \end{equation}
  It is immediately verified that the image of all eigenvalues of
  $\Ac$ under $r$ equal \(-\frac{4}{5}\).  In particular, the inverse
  image $r^{-1}_\mu$ of \(\mu = -\frac{4}{5}\) equals 
  \begin{equation}
    \label{eq:3x3r-inv}
    r^{-1}_{-4/5} = \left\{\frac{1}{2}, -\frac{1}{2}\right\},
  \end{equation}
  the entire spectrum of $\Ac$.
  Functions of Jordan forms are easy to compute and in this case we obtain 
  \[
    r(\Ac) = 
    \begin{bmatrix*}[r]
      -\frac{4}{5} & 0 & 0 \\
      0 & -\frac{4}{5} & \frac{16}{25} \\[.25em]
      0 & 0 & -\frac{4}{5}
    \end{bmatrix*}.
  \]
  In particular, it shows that the generalized eigenspace of the
  eigenvalue $-4/5$ is the entire space $\CC^3$. This eigenspace also
  admits the direct decomposition
  \begin{align*}
    E_{-4/5}(r(\Ac)) = 
    \CC^3
    & =
    \spn(e_1)
    \oplus
    \spn(e_2, e_3)
    \\
    & =
    E_{-1/2}^\infty(\Ac) \oplus E_{1/2}^\infty(\Ac),
  \end{align*}
  thus illustrating the identity of
  Lemma~\ref{lem:rational-gen-eigenspaces} in a simple
  finite-dimensional case.
\end{example}

\begin{example}[The Cayley transform of an operator]
  \label{ex:cayley}
  Consider the Cayley transform
  \begin{equation}
    \label{eq:mobius}
    r(z) = \frac{z - \ii}{z + \ii} = 1 - \frac{2\ii}{z + \ii},
  \end{equation}
  which maps the upper half-plane to the unit disk~\cite[Theorem
  6.3.6]{GreeKran06}.  This is a M\"obius transform (or a linear
  fractional transformation), see
  e.g.,~\cite[Section~III.3]{Conway2012} or
  \cite[Section~6.3]{GreeKran06}.  In this example, we set the linear
  operator \(\Ac\) on \(\Hc = \CC^3\) as represented by the matrix in
  Jordan canonical form
  \[
    \Ac =
    \begin{bmatrix*}[r]
      10 - \ii & 0 & 0 \\
      0 & 10 + \ii & 1 \\
      0 & 0 & 10 + \ii
    \end{bmatrix*},
  \]
  whose eigenvalues are \(10 - \ii\) and \(10 + \ii\), with algebraic
  multiplicities \(1\) and \(2\), respectively.
  The image of these eigenvalues under the Cayley transform
  \(r\) defined in~\eqref{eq:mobius} are
  \(r(10 - \ii) = \frac{5 - \ii}{5}\) and
  \(r(10 + \ii) = \frac{25 - 5\ii}{26}\).
  As M\"obius transforms are invertible, the inverse images
  \(r^{-1}_\mu\), for \(\mu_1 = \frac{5 - \ii}{5}\) and
  \(\mu_2 = \frac{25 - 5\ii}{26}\) are singletons.
  The  Cayley transform of \(\Ac\) is 
  \[
    r(\Ac) =
    \begin{bmatrix*}[r]
      \mu_1 & 0 & 0 \\
      0 & \mu_2 & \frac{5 + 507\ii}{676} \\
      0 & 0 & \mu_2
    \end{bmatrix*},
  \]
  whose eigenvalues are \(\mu_1\) and \(\mu_2\), with algebraic
  multiplicities \(1\) and \(2\), respectively.
  It can be checked readily that the generalized eigenspaces of
  \(r(\Ac)\) satisfy the identity of
  Lemma~\ref{lem:rational-gen-eigenspaces}.

  It is worth noting that \(\abs{\mu_1} = \sqrt{26}/5 > 1\) and
  \(\abs{\mu_2} = 5/\sqrt{26} < 1\), which is consistent with the
  mapping properties of the Cayley transform.  This example
  illustrates how M\"obius transforms can be used to map some parts of
  the spectrum of an operator into dominant eigenvalues (larger in
  absolute value than the rest of the spectrum) of the transformed operator.
\end{example}

\begin{remark}[General rational functions]
  A general rational function is a quotient of two polynomials $p(z)$
  and $q(z)$ over the complex field, namely, \(r(z) = p(z)/ q(z)\).
  It is well-known (see, e.g.,~\cite[Eq.~V.1.9]{Conway2012} or
  \cite[Theorems~IV.5.2--IV.5.3]{Lang2012}) that we can express \(r\)
  in a partial fraction expansion: denoting the zeros of $q(z)$ by
  $z_i$, with multiplicity $\nu_i$, there exist coefficients
  \(\omega_{j, i}\in \mathbb{C}\) and a polynomial
  \(\hcal(z)\) representing the holomorphic part of \(r\),
  such that
  \begin{equation}
    \label{eq:general-r}
    r(z) = \hcal(z) + \scal(z), \quad \text{ where } \quad \scal(z) 
    =    \sum_{j=1}^\nquad 
    \sum_{i=1}^{\nu_j}\frac{\omega_{j, i}}{(z_j - z)^i}.
  \end{equation}
  As before, assume that $z_j \in \rho(\Ac)$. Then, the proof of
  Lemma~\ref{lem:rational-gen-eigenspaces} can be generalized to this
  case to show that  {\em for any $\mu \in \CC$
    such that
    \begin{equation}
      \label{eq:4}
      \hcal_\mu^{-1} \cap \sigma(\Ac) = \emptyset, 
    \end{equation}
    the equality~\eqref{eq:13} continues to hold} for the $r(z)$
  in~\eqref{eq:general-r}.  Here
  $\hcal_\mu^{-1}:= \{ z \in \CC: \hcal(z) = \mu \}$ denotes the set
  of zeros of \(\hcal(z) -\mu\).  The change in the proof needed to achieve this 
  generalization involves replacing~\eqref{eq:11} by
  \[
    u = (\hcal(\Ac) - \mu)^{-1} \left( v  - \scal(\Ac) u \right),
  \]
  where the inverse exists and can be written as a composition of
  resolvents due to~\eqref{eq:4} and the fundamental theorem of algebra.
  This proves that $u$ is in
  \(\dom(\Ac)\) whenever \(v\) is in \(\dom(\Ac)\), and iterating, one
  establishes~\eqref{eq:10}, \eqref{eq:9}, and the remainder of proof
  of the lemma as before.  Note that condition~\eqref{eq:4}
  generalizes the condition $\mu \ne \og_0$ of
  Lemma~\ref{lem:rational-gen-eigenspaces}: indeed, when
  $\hcal(z) = \og_0$, a polynomial of degree zero, $\mu \ne \og_0$
  implies that $\hcal^{-1}_\mu = \emptyset$ so~\eqref{eq:4} holds.
\end{remark}

\section{Estimating the error in eigenspace}
\label{sec:eigenspace-error-estimate}

Often, in practical problems, the need arises to numerically
approximate the eigenspace corresponding to a finite cluster of
eigenvalues in $\sigma(\Ac)$.
Here, ``cluster''  refers to a finite set of eigenvalues that are
relatively close to each other and well separated from the rest of
the spectrum.
The main result of this section, Theorem~\ref{thm:espace-gap-estimator}, provides a computable reliable eigenspace error estimator  under suitable assumptions.

Recall that  an eigenvalue is called an
isolated eigenvalue if it has an open neighborhood in $\CC$ whose all
other points belong to $\rho(\Ac)$.  Let $\vL$ denote a finite set of
isolated eigenvalues of $\Ac$.
Let $m(\lambda, \Ac)$ denote the
algebraic multiplicity of $\lambda$ as an eigenvalue of
$\Ac$. For all  $z \in \rho(\Ac)$, we set 
$m(z, \Ac)=0$.
Henceforth, we assume that $m(\lambda, \Ac) < \infty$ for each
eigenvalue $\lambda$ in the cluster~$\vL$.

We are interested in controlling the error in approximation of the
algebraic eigenspace  $E$ of the cluster of eigenvalues
$\vL$.  Let the set $\vL$ be enclosed within $\vG$, a positively
oriented, bounded, simple, closed contour that lies entirely in
$\rho(\Ac)$ and encloses no element of $\sigma(\Ac)$ other than those in $\vL$.
Then the Riesz projector associated to the cluster $\vL$ is
\begin{equation}
  S = \frac{1}{2\pi \ii} \oint_\vG
  R \parens{z} \diff{z}.
  \label{eq:feast-spectral}
\end{equation}
Hence the wanted eigenspace is the range of $S$, 
\[
  E = \ran S \subset \dom (\Ac),
\]
and 
\[
  \dim E = \sum_{\lambda \in \vL} m(\lambda, \Ac) < \infty.
\]

Before approximating $E$, we map the cluster using rational functions of the
form~\eqref{eq:feast-filter}, restricted further, for simplicity, to the form
\begin{equation}
    r(z) =  \og_0 +  \sum_{j=1}^\nquad 
    \frac{\omega_{j}}{z_j - z}
    \label{eq:feast-filter-2}
\end{equation}
for some $\og_j \in \CC$. To mimic the behavior of the operator-valued
integral~\eqref{eq:feast-spectral}, it usually suffices to consider
rational functions of the restricted form~\eqref{eq:feast-filter-2}
(as will be clear from Example~\ref{ex:Butterworth} later).
The set $\vL$ is mapped by $r$ to 
\begin{equation}
  \label{eq:mapped-ews}
  \vU = \{ r(\lambda): \lambda \in \vL\}.
\end{equation}
Let
\begin{equation}
\label{eq:Sr-defn}
  S_r = r(\Ac) = \og_0 + \sum_{j=1}^\nquad \omega_j R(z_j).
\end{equation}
Many algorithmic techniques construct specific rational functions $r$
of interest that make $\vU$ more easily capturable than $\vL$, e.g.,
rational functions $r$ that make $\vU$ into a set of ``dominant
eigenvalues'' of $S_r$ easily captured by power iterations.
Postponing discussion of such details,
we proceed making the following assumption.

\begin{assumption}[On Rational Functions]
  \label{asm:r-asm}
  Assume that $r$ is of the form~\eqref{eq:feast-filter-2}, 
  \begin{equation}
    \label{eq:r-asm}
    \og_0 \notin \vU \quad\text { and } \quad
    \bigcup_{\mu \in \vU} r_\mu^{-1}\cap \sigma(\Ac) \,\subseteq\, \vL
  \end{equation}
  where $r_\mu^{-1}$ is defined in \eqref{eq:r-mu-inv}
  and $\vU$ is as in~\eqref{eq:mapped-ews}.
\end{assumption}

Let $\vT$ be a simple closed
curve in the complex plane enclosing $\vU$ and not enclosing the
origin nor any point of $\sigma(S_r) \setminus \vU$. Define
\begin{align*}
  P_r & = \frac{1}{2\pi \ii} \oint_\vT( z - S_r)^{-1}\, \dz,
  \\
  E_r & = \ran P_r.
\end{align*}

\begin{lemma}
  \label{lem:E=Er}
  Suppose Assumption~\ref{asm:r-asm} holds. Then every $\mu \in \vU$ is an
  eigenvalue of $S_r$ of algebraic multiplicity
  \begin{equation}
    \label{eq:13-dim}
    m(\mu, S_r) = \sum_{\lambda \,\in\, r^{-1}_\mu}   m(\lambda, \Ac).
  \end{equation}
  Moreover, the spaces  $E_r$ and $E$ coincide.
\end{lemma}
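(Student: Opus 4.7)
The plan is to deduce both claims from the spectral mapping result Lemma~\ref{lem:rational-gen-eigenspaces}, combined with the Riesz projector decomposition into generalized eigenspaces. First, for each $\mu \in \vU$, Assumption~\ref{asm:r-asm} guarantees $\mu \ne \og_0$, so Lemma~\ref{lem:rational-gen-eigenspaces} yields
\[
  E_\mu(S_r) = \bigoplus_{\lambda \,\in\, r^{-1}_\mu} E_\lambda^\infty(\Ac).
\]
Since $r^{-1}_\mu\cap \sigma(\Ac) \subseteq \vL$ by Assumption~\ref{asm:r-asm}, the summands indexed by $\lambda \in r^{-1}_\mu \setminus \sigma(\Ac)$ are trivial (as $\lambda \in \rho(\Ac)$ gives $E_\lambda^\infty(\Ac)=\{0\}$ and $m(\lambda,\Ac)=0$ by the stated convention). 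So the effective sum is over $r^{-1}_\mu \cap \vL$. Taking dimensions and using $\dim E_\lambda^\infty(\Ac) = m(\lambda, \Ac)$ for isolated eigenvalues of finite multiplicity then gives the multiplicity formula~\eqref{eq:13-dim}. In particular, this shows each $\mu \in \vU$ is indeed an eigenvalue of~$S_r$, with $m(\mu,S_r)$ finite since $\vL$ is finite and each $m(\lambda,\Ac)$ is finite.

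For the equality $E_r = E$, I would invoke the standard fact that for a bounded operator $S_r$ and a contour $\vT$ enclosing only the eigenvalues in $\vU \subset \sigma(S_r)$, the associated Riesz projector $P_r$ satisfies
\[
  E_r = \ran P_r = \bigoplus_{\mu \,\in\, \vU} E_\mu(S_r),
\]
and analogously $E = \ran S = \bigoplus_{\lambda \in \vL} E_\lambda^\infty(\Ac)$, justified by the isolated eigenvalue assumption on each element of $\vL$ and the additivity of Riesz projectors over disjoint spectral subsets. Substituting the decomposition of each $E_\mu(S_r)$ obtained in the first paragraph,
\[
  E_r = \bigoplus_{\mu \,\in\, \vU} \bigoplus_{\lambda \,\in\, r^{-1}_\mu \cap \vL} E_\lambda^\infty(\Ac).
\]
Since $\vU = r(\vL)$, every $\lambda \in \vL$ appears exactly once on the right (in the $\mu = r(\lambda)$ summand), and Assumption~\ref{asm:r-asm} ensures no additional $\lambda$'s enter. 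Hence the double direct sum reduces to $\bigoplus_{\lambda \in \vL} E_\lambda^\infty(\Ac) = E$.

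The main thing to be careful about is justifying the two direct sum decompositions coming from the Riesz projectors---specifically, that $\vU \subseteq \sigma(S_r)$, that no other points of $\sigma(S_r)$ lie inside $\vT$ (guaranteed by the hypothesis on $\vT$), and that the hypothesis on $\vG$ analogously gives the $\vL$-decomposition for $S$. A minor bookkeeping point is to check that distinct elements of $\vL$ mapping to the same $\mu \in \vU$ are absorbed consistently into the $r^{-1}_\mu \cap \vL$ summand---no eigenvalue is double-counted because the outer sum is over the \emph{set} $\vU = r(\vL)$, not over $\vL$. Once these bookkeeping items are handled, the two parts of the lemma follow immediately from Lemma~\ref{lem:rational-gen-eigenspaces} and the Riesz projector calculus; no new analytical machinery is needed.
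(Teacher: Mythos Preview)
Your proposal is correct and follows essentially the same approach as the paper: both arguments apply Lemma~\ref{lem:rational-gen-eigenspaces} to each $\mu\in\vU$ (using $\mu\ne\og_0$ from Assumption~\ref{asm:r-asm}), reduce the direct sum to $r_\mu^{-1}\cap\vL$, count dimensions for~\eqref{eq:13-dim}, and then sum over $\mu\in\vU$ to identify $E_r$ with $E$. Your write-up is slightly more explicit about invoking the Riesz projector decompositions $E_r=\bigoplus_{\mu\in\vU}E_\mu(S_r)$ and $E=\bigoplus_{\lambda\in\vL}E_\lambda^\infty(\Ac)$, but the substance is the same.
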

\begin{proof}
  Let $\mu \in \vU$.  By~\eqref{eq:r-asm}, $\mu \ne \og_0$.  Hence,
  Lemma~\ref{lem:rational-gen-eigenspaces} applies and \eqref{eq:13}
  holds.    By counting dimensions on both sides
  of~\eqref{eq:13}, the stated equality~\eqref{eq:13-dim} follows.

  Clearly, $\mu$ is an eigenvalue of $S_r$ if and only if its
  algebraic eigenspace $E_\mu(S_r)$ is nontrivial.  The
  equality~\eqref{eq:13} implies that the eigenspace $E_\mu(S_r)$ is
  nontrivial if and only if there is a $\lambda$ in $r_\mu^{-1}$ such
  that $E_\lambda^\infty(\Ac)$ is nontrivial, i.e., if and only if
  there is a $\lambda$ in $r_\mu^{-1} \cap \sigma(\Ac).$ Since
  $r_\mu^{-1} \cap \sigma(\Ac)$ is contained in $\vL$
  by~\eqref{eq:r-asm} of Assumption~\ref{asm:r-asm}, the
  equality~\eqref{eq:13} implies the sum of the eigenspaces of all
  $\mu$ in $\vU$ satisfies   
  \[
    \sum_{\mu \in \vU}
    E_\mu(r(\Ac)) =
    \sum_{\mu \in \vU}\;
    \bigoplus_{\lambda \,\in\, r^{-1}_\mu \cap \sigma(\Ac) }
    E_\lambda^\infty (\Ac)
    =
    \sum_{\lambda \in \vL} E_\lambda^\infty (\Ac).
  \]
  The left and right hand sides equal $E_r$ and $E$, respectively, so $E_r = E$.
\end{proof}

\begin{example}
  Reviewing the case of the $r(z)$ and $\Ac$ of Example~\ref{ex:3x3},
  putting $\vL = \{-1/2, 1/2\}$, we have $\vU = \{-4/5\}$ (which
  clearly does not contain $\og_0=0$).  Then~\eqref{eq:3x3r-inv} shows
  that Assumption~\ref{asm:r-asm} holds.
\end{example}

\begin{example}[The Cayley transform revisited]
  Reviewing the case of the Cayley transform $r(z)$ and $\Ac$ of
  Example~\ref{ex:cayley}, putting $\vL = \{10 - \ii, 10 + \ii\}$, we have
  $\vU = \{ (5 - \ii)/5, (25 - 5\ii)/26\}$. Clearly $\og_0 = 1$ for this $r(z)$, which is not in contained in $\vU$, so the  first condition
  in~\eqref{eq:r-asm} holds.  The second
  condition in~\eqref{eq:r-asm} also holds since the inverse images
  of the two elements of $\vU$ are exactly the two eigenvalues in
  $\vL$.
\end{example}

\begin{example}[Rational function in inverse iterations]
  When a single isolated eigenvalue $\lambda_1 \in \CC$ of a linear
  operator $\Ac$ on $\Hc$ is to be targeted, the cluster
  $\vL =\{ \lambda_1\}$ is a singleton.  A standard approach in this
  case is to perform an inverse iteration to capture the eigenspace of
  $\lambda_1$. This iteration consists of repeated application of
  powers of $r(\Ac)$ on some initial subspace for an $r$ of the form
  \[
    r(z) = \frac{1}{ z_1 -z},
  \]
  using a ``guess'' $z_1 \in \CC$ that is close but not equal to
  $\lambda_1$.  In this case the nonzero complex number
  $\mu = r(\lambda_1) = (z_1 - \lambda_1)^{-1}$ is the only element of
  $\vU$.  We also immediately see that $\mu = r(\lambda)$ if and only
  if $\lambda = \lambda_1$, i.e., $r_\mu^{-1} = \vL$.  Clearly
  Assumption~\ref{asm:r-asm} holds in this case.
\end{example}

\begin{example}[A rational function in FEAST
  iterations] \label{ex:Butterworth} The so-called ``FEAST'' contour integral
  eigensolver~\cite{Pol2009-1,GopGruOva2020-1} uses rational functions
  obtained by replacing the contour integral over $\vG$
  in~\eqref{eq:feast-spectral} by a quadrature. Consider the case of a
  circular contour $\vG$ centered at some $O \in \CC$ of radius $R>0$,
  parameterized by $O + R \phi\,e^{\ii \theta}$ with some initial
  phase factor $\phi$ of unit magnitude.
  Rewriting the contour integral over $\vG$ as an integral over $[0, 2\pi]$ 
  based on this parameterization,  and applying
  the trapezoidal rule with $N$ points, we obtain a rational function of
  the form~\eqref{eq:feast-filter} with
  \begin{equation}
    \label{eq:Butterworth-z-w}
    \og_j = \hat\og_N^{j-1} R \phi/N, \qquad z_j = R \phi\,\hat\og_N^{j-1} + O
  \end{equation}
  where $\hat\og_N = e^{\ii 2 \pi/N}$. A commonly made choice is
  $\phi = \pm e^{\ii \pi/N}.$ (The case  $N=2$ with $O=0$ and $R=1$ yields
  \eqref{eq:feast-N2} in Example~\ref{ex:3x3}.)  
  Such rational functions are often known
  as ``Butterworth filters'' in signal processing~\cite{Ham1998-1}.
  Note that the powers of $\hat \og_N$ appearing above, namely
  $\hat\og_N^j$ for $j = 0, \dots, N-1$, are the $N$th roots of
  unity. By a partial fraction expansion (see
  \cite[Example~2.2]{GopGruOva2020-1}) one can show that with the
  settings in~\eqref{eq:Butterworth-z-w}, we have
  \begin{equation}
    \label{eq:rz-two-forms}
    r(z) =  \sum_{j=1}^\nquad 
    \frac{\omega_j}{z_j - z} =
    \left[1 - \left( \frac{z - O}{R\phi} \right)^N   \right]^{-1}.
  \end{equation}
  Let $D = \{ z \in \CC: | z - O | < R \}$ be the disk enclosed by
  $\vG$. Now suppose we are in the setting described in the beginning
  of this section, where we have an eigenvalue cluster of interest
  $\vL$ that lies in $D$ and
  \begin{equation}
    \label{eq:4intersect}
    \sigma(\Ac) \cap D = \vL. 
  \end{equation}
  Then we claim that Assumption~\ref{asm:r-asm} holds. Indeed, since
  $|z - O| < |R \phi|$, the second expression for $r(z)$ in
  \eqref{eq:rz-two-forms} and triangle inequality yields
  \[
    |r(z)| > \frac 1 2 \qquad \text{ for all } z \in D.
  \]
  Hence none of the elements of  $\vU$ can equal $\og_0 =0$. Next, consider 
  any $\mu \in \vU$. There must be a $\lambda \in \vL$
  such that $\mu = r(\lambda)$. Any $z $ in $r_{\mu}^{-1}$ must therefore 
  satisfy $r(z) = r(\lambda)$, or equivalently
  $(z - O)^N = (\lambda - O)^N$. Hence $z$ must equal one of the
  complex numbers
  \[
    \zeta_\ell = O +  (\lambda - O)\, \hat\og_N^\ell,
    \qquad
    \ell=0, \ldots, N-1,
  \]
  all of which lie in $D$, i.e., we have shown that
  $r_\mu^{-1} = \{ \zeta_\ell: \ell=0, \ldots, N-1\} \subset
  D$. By~\eqref{eq:4intersect}, the only elements of the spectrum of
  $\Ac$ in $D$ are those in $\vL$, so  it follows that
  $r_\mu^{-1} \cap \sigma(\Ac) \subseteq \vL$, thus
  verifying~\eqref{eq:r-asm}.
\end{example}

\bigskip

Next, we proceed to approximate $S_r$ and study how the eigenspaces
change under such approximations. Throughout we suppose that there is a
Hilbert space $V$, normed by $\| \cdot \|_{V}$, that is continuously
embedded into $\Hc$ such that $\dom(\Ac ) \subseteq V$, $E \subset V$, and that $V$
is an invariant subspace of the resolvent operator $R(z)$ for all
$z \in \rho(\Ac)$. Typical
examples of $V$ include the whole space $\Hc,$ or the set $\dom (\Ac)$
after making into a Hilbert space using the graph norm of $\Ac$, or
the domain of the a sesquilinear form from which $\Ac$ arises. Such
examples were discussed in~\cite{GopGruOva2020-1}, where the next
assumption can also be found. 
The operator norm induced by the norm on~$V$ is also denoted by
$\| \cdot \|_V$, e.g., since $V$ is an invariant subspace of the resolvent
$R(z)$, it can be considered as an operator on $V$ whose operator norm is 
\[
  \| R(z) \|_V = \sup_{0 \ne v \in V} \frac{ \| R(z) v\|_V }{ \| v \|_V }.
\]
This operator  norm appears in the next assumption.

\begin{assumption}[On Resolvent Approximations]
  \label{asm:resolvent-approx}
  We assume that there are
  finite-dimensional subspaces $V_h \subset V$ (indexed by a
  discretization parameter $h$ approaching zero) and finite-rank
  operators $R_h(z) : \Hc \to V_h$ such that
  \begin{equation}
    \label{eq:1}
    \lim_{h \to 0} \| R_h(z_k) - R(z_k) \|_V = 0
  \end{equation}
  for every $k = 1, \dots, N.$
\end{assumption}

Under this assumption, the operator   
\begin{equation}
  \label{eq:Srh-defn}
  \Srh = \og_0 + \sum_{j=1}^\nquad \omega_j R_h(z_j)
\end{equation}
is an approximation of $S_r$. Note that
Assumption~\ref{asm:resolvent-approx}, in particular, restricts us to
consider only operators $\Ac$ with compact resolvent (since $V_h$ is
finite dimensional).  Equation~\eqref{eq:1} allows us to define a
natural approximate eigenspace as follows. It 
implies that $\|\Srh - S_r\|_V$ converges to
$0$ as $h$ approaches $0$. Hence, given any open disc enclosing a single 
isolated eigenvalue $\mu \in \vU$ of $S_r$, for sufficiently small
$h$, there are exactly as many eigenvalues of $\Srh$ in the same disc
as the algebraic multiplicity $m(\mu, S_r).$ In particular, this
implies that, for sufficiently small $h$, the contour $\vT$ is in the
resolvent set of $\Srh$ and encloses as many eigenvalues of joint
multiplicity equal to $\dim E_r.$ Hence, the integral
\[
  \Prh  = \frac{1}{ 2 \pi \ii} 
  \oint_\Theta \left(z - \Srh \right)^{-1} \,\dz
\]
is well defined and its range 
\begin{equation}
  \label{eq:Eh-defn}
    E_h = \ran \Prh 
\end{equation}
has the same dimension as the space $E_r$ it intends to approximate: 
\begin{equation}
  \label{eq:2}
  \dim E_h = \dim E_r.
\end{equation}
We tacitly assume throughout that {\em $h$ has been made small enough
for~\eqref{eq:2} to hold and for the above definitions to make sense.}
In practice, one computes (a basis for) the approximate eigenspace $E_h$
in~\eqref{eq:Eh-defn} through subspace iterations---see, e.g., \cite[Theorem~3.4]{GopGruOva2020-1} which shows how certain subspace iterates approach $E_h$,
or see 
Section~\ref{sec:numerical-results}, where we use 
a subspace iteration based on operator-valued contour integrals to compute~$E_h$.

Comparison of the spaces $E$ and $E_h$ is done through the gap metric in $V$. To
define it, first let
\[
  \delta(U, W) =
  \sup_{u \in U,\; \|u \|_V=1 } \dist_V(u, W)  = \| (I - Q_W) Q_U \|_V
\]
for any two closed subspaces \(U\) and \(W\) of $V$. Here
\(\dist_V(u, V) = \inf_{v \in V} \norm{u - v}_V\),
$I$ denotes the identity on $V$,
and $Q_W$ denotes the $V$-orthogonal projection into $W$.
Symmetrizing this, we define the gap by
\[
  \gap_V(U, W) 
  = \max \Big( \delta(U, W), \;\delta(W, U) \Big).
\]
The argument of the next lemma is similar to the proof of
\cite[Theorem~4.1]{GopGruOva2020-1}, but the key difference is that
the supremum appearing in Lemma~\ref{lem:gap-bound} is taken over the
approximating space $E_h$, not the exact eigenspace~$E$.

\begin{lemma}
  \label{lem:gap-bound}
  If Assumption~\ref{asm:resolvent-approx} holds, 
  then there exist $C_r>0$ and $h_0>0$ such
  that for all $h < h_0$,
  \begin{equation}
    \label{eq:gapEEh}
    \gap_V( E_r, E_h) \le C_r
    \sup_{ e_h \in E_h, \; \| e_h \|_V = 1 }
    \left\| ( S_r - \Srh) e_h \right\|_V.
  \end{equation}
\end{lemma}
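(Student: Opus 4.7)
The plan is to bound $\delta(E_h, E_r)$ directly using a resolvent identity combined with the invariance of $E_h$ under $\Srh$, and then to recover the full gap using the symmetry of the directional distance for equal-dimensional subspaces in a Hilbert space.

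First, I would use the second resolvent identity to express the projector difference as
\[
\Prh - P_r
=
\frac{1}{2\pi\ii}\oint_\vT \bigl[(z-\Srh)^{-1} - (z-S_r)^{-1}\bigr]\,\dz
=
\frac{1}{2\pi\ii}\oint_\vT (z-S_r)^{-1}(S_r-\Srh)(z-\Srh)^{-1}\,\dz.
\]
For any $e_h\in E_h$ with $\|e_h\|_V=1$, since $\Prh e_h = e_h$ and $P_r e_h\in E_r$, we obtain
\[
\dist_V(e_h, E_r) \le \|e_h - P_r e_h\|_V = \|(\Prh-P_r)e_h\|_V.
\]

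Next, I would exploit that $E_h$ is invariant under $\Srh$ and that, for $h<h_0$, the contour $\vT$ lies in $\rho(\Srh)$ and encloses exactly the eigenvalues of $\Srh|_{E_h}$. Hence $(z-\Srh)^{-1}e_h \in E_h$ for every $z\in\vT$. A standard Neumann series argument combined with Assumption~\ref{asm:resolvent-approx} shows that $\|(z-\Srh)^{-1}\|_V$ is bounded uniformly in $z\in\vT$ and in $h<h_0$ by some constant; call it $\tilde C_r$. Writing $\eta_h := \sup_{e_h\in E_h,\;\|e_h\|_V=1}\|(S_r-\Srh)e_h\|_V$, we have $(z-\Srh)^{-1}e_h\in E_h$, so
\[
\|(S_r-\Srh)(z-\Srh)^{-1}e_h\|_V \le \eta_h\,\|(z-\Srh)^{-1}e_h\|_V \le \tilde C_r\,\eta_h.
\]
Combining with a uniform bound of $\|(z-S_r)^{-1}\|_V$ on $\vT$ and the length $|\vT|$, the contour integral yields $\|(\Prh-P_r)e_h\|_V \le C_r\,\eta_h$ for some $C_r>0$. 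Taking the supremum over $e_h$ gives $\delta(E_h,E_r)\le C_r\,\eta_h$, which is the desired bound with the supremum already taken over $E_h$.

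To finish, I would invoke the fact that $V$ is a Hilbert space and that $\dim E_r = \dim E_h < \infty$ by Lemma~\ref{lem:E=Er} and~\eqref{eq:2}. For closed subspaces of equal finite dimension in a Hilbert space one has $\delta(E_r,E_h) = \delta(E_h,E_r)$ (a standard consequence of the symmetry of principal angles, or equivalently, of $\|P^\perp_{E_r}P_{E_h}\| = \|P^\perp_{E_h}P_{E_r}\|$ when the dimensions agree). Therefore $\gap_V(E_r,E_h) = \delta(E_h,E_r) \le C_r\,\eta_h$, which is~\eqref{eq:gapEEh}. The main technical obstacle is the uniform bound on $\|(z-\Srh)^{-1}\|_V$ on $\vT$ for small $h$; this is where Assumption~\ref{asm:resolvent-approx} (norm resolvent convergence at the interpolation nodes $z_j$, transferred to norm convergence of $\Srh \to S_r$) is essential, since it ensures that the contour avoids $\sigma(\Srh)$ and that the resolvent norms along $\vT$ do not blow up as $h\to 0$. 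The equal-dimension symmetry is a standard, if slightly delicate, ingredient that plays a central role in cleanly obtaining a one-sided supremum over~$E_h$.
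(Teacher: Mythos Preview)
Your proposal is correct and follows the paper's argument almost verbatim through the resolvent identity, the invariance $(z-\Srh)^{-1}E_h\subseteq E_h$, and the resulting bound $\delta(E_h,E_r)\le C_r\,\eta_h$. The only genuine difference is in the final step, where the paper invokes Kato's Theorem~I.6.34 to extract a subspace $\tilde E_r\subseteq E_r$ with $\gap_V(E_h,\tilde E_r)=\delta(E_h,E_r)<1$ and then argues $\tilde E_r=E_r$ by comparing dimensions, whereas you appeal directly to the Hilbert-space identity $\delta(E_r,E_h)=\delta(E_h,E_r)$ for closed subspaces of equal finite dimension (equivalently, symmetry of principal angles, since $P_{E_r}|_{E_h}$ and $P_{E_h}|_{E_r}$ are mutual adjoints with the same singular values). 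Your route is shorter and avoids the detour through an auxiliary subspace; the paper's route has the mild advantage of not relying on the Hilbert structure beyond what is already implicit in the setup. One small correction: you should not cite Lemma~\ref{lem:E=Er} for the dimension equality, since that lemma requires Assumption~\ref{asm:r-asm}, which is not among the hypotheses here; the equality $\dim E_r=\dim E_h$ is exactly~\eqref{eq:2}, established in the text from Assumption~\ref{asm:resolvent-approx} alone.
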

\begin{proof}
  We begin by noting that 
  \begin{align*}
    P_r - \Prh 
    & =
      \frac{1}{2\pi \ii} \oint_\vT
    \Big[ (z - S_r)^{-1} - (z - \Srh)^{-1}   \Big] \, \dz
    \\
    & =
      \frac{1}{2\pi \ii} \oint_\vT
      (z - S_r)^{-1}
      \big[  S_r - \Srh   \big]  (z - \Srh)^{-1} \, \dz.
  \end{align*}
  Since generalized eigenspaces of any operator are invariant
  subspaces of its resolvent,
  \[
    (z - \Srh)^{-1} E_h \subseteq E_h.
  \]
  Consequently,   for any $e_h \in E_h$, we  have
  \begin{align*}
    \| (\Prh& - P_r) e_h \|_V
     =      
      \frac{1}{2\pi}
      \left\|
      \oint_\vT
      (z - S_r)^{-1}
      (S_r - \Srh  ) Q_{E_h}  (z - \Srh)^{-1} e_h \, \dz
      \right\|
    \\
    & \le      
      \left(\frac{1}{2\pi}
      \oint_\vT
      \|(z - S_r)^{-1}\|_V
      \left\| (z - \Srh)^{-1} \right\|_V \,
      \dz
      \right)
      \| (S_r - \Srh  )Q_{E_h} \|_V  \| e_h\|_V
  \end{align*}
  Bound the term within parentheses above by $C_r$ and note that we
  may choose such a bound independently of $h$ when $h$ is
  sufficiently small due to~\eqref{eq:1} of Assumption~\ref{asm:resolvent-approx}.
  We use the above estimate
  to conclude that
  \begin{align}
    \nonumber 
    \delta(E_h, E_r)
    & = \sup_{e_h \in E_h, \; \| e_h \|_V = 1 } \dist_V(e_h, E_r)
    \\ \nonumber 
    & \le  \sup_{e_h \in E_h, \; \| e_h \|_V = 1 }\| e_h - P_r e_h \|_V
    \\ \nonumber 
    & =  \sup_{e_h \in E_h, \; \| e_h \|_V = 1 }\| (\Prh - P_r) e_h \|_V
    \\ \label{eq:6}
    & \le C_r \| (S_r - \Srh  ) Q_{E_h} \|_V.
  \end{align}
  This implies, by~\eqref{eq:1}, that there is an $h_0>0$ such that
  $\delta(E_h, E_r) = \| (I - Q_{E_r} ) Q_{E_h}\|_V < 1$ for all
  $h < h_0$. Then the two alternatives
  of~\cite[Theorem~I.6.34]{Kat2013-1} apply: they imply that there is
  a subspace $\tilde E_r \subseteq E_r$ such that
  \begin{equation}
    \label{eq:3}
    \delta(E_h, E_r) = \delta(E_h, \tilde E_r) = \gap(E_h, \tilde E_r) < 1.
  \end{equation}
  Note that
  \begin{align*}
    \dim \tilde{E}_r
    & = \dim E_h
    && \text{ by~\eqref{eq:3}}
    \\
    & = \dim E_r
    && \text{ by~\eqref{eq:2}}.
  \end{align*}
  Since $\tilde E_r \subseteq E_r$, this implies that
  $E_r = \tilde E_r$. Hence $\gap(E_h, \tilde E_r) = \gap(E_h, E_r)$,
  which by~\eqref{eq:3} also equals $ \delta(E_h, E_r)$. Therefore,
  \eqref{eq:6} shows that
  $ \gap(E_h, \tilde E_r) =
  \gap(E_h, E_r) =  \delta(E_h, E_r)\le C_r \| (S_r - \Srh)
  Q_{E_h} \|_V$, proving the result.
\end{proof}

\bigskip

The lemma now leads to our main result, which produces an error
estimator for the gap between the true and the approximate eigenspace
by suitably combining error estimators for certain linear source
problems.  Suppose there is a Hilbert space $Y$, with
norm~$\| \cdot \|_{Y}$, into which such error estimators for source
problems can be computed---they are denoted by $\EE$ in the next
assumption.

\begin{assumption}[On Source Error Estimators]
  \label{asm:source-ee}
  There is a $C_0>0$ and a linear
  operator $\EE: V_h \to Y$ (whose application to any $v \in V_h$ is easily computable) satisfying
  \begin{equation}
    \label{eq:EE}
    \| S_r v_h - \Srh v_h \|_V \le C_0 \| \EE v_h\|_{Y}, \qquad v_h \in V_h.
  \end{equation}
\end{assumption}

To understand the relevance of this assumption, subtract~\eqref{eq:Srh-defn} from~\eqref{eq:Sr-defn} to get 
\[
  S_r v_h - \Srh v_h =
  \sum_{j=1}^\nquad \omega_j \big( R(z_j) v_h - R_h(z_j) v_h\big).
\]
Each summand above can be estimated by observing that
$u_h = R_h(z_j) v_h$ is an approximation to the solution $u$ of the
source problem $ (z - \Ac) u = v_h$. This is why we referred to
$\EE$ as an error estimator from source problems.

Our goal is to repurpose such source problem error estimators to
estimate the error in the eigenspace approximation~$E_h$.  Assuming
that a basis $ e_h^i$, $i=1, \dots, L, $ for the eigenspace
approximation $E_h$ has been computed, we define the $L \times L$
matrices
\begin{equation}
\label{eq:5}
G_{ij} = ( \EE e_h^j, \EE e_h^i)_Y, \qquad
M_{ij} = (e_h^j, e_h^i)_V
\end{equation}
using the inner products of $Y$ and $V$, respectively.
Consider the small $L \times L$ Hermitian generalized eigenproblem
$G x = \lambda Mx$.
Let its maximal eigenvalue be denoted by
$\hat \lambda$, i.e.,
\[
  \hat\lambda = \max \sigma(M^{-1}G),
\]
and let
$\hat x$ denote an eigenvector of eigenvalue $\hat \lambda$.
Let 
\begin{equation}
\label{eq:ehmax}
\tilde e_h = \sum_{i=1}^L \hat x_i e_h^i, \qquad
\hat e_h = \frac{\tilde e_h } { \| \tilde e_h \|_V}.
\end{equation}
The latter function in $E_h$ serves as an eigenspace error indicator
after an application of $\EE$, whose global reliability is proved 
next. The proof also makes it clear that $\hat e_h$ does not depend on the
specific choice of $e_h^j$ but only on the space $E_h$.

\begin{theorem}[Eigenspace Gap Estimator]
  \label{thm:espace-gap-estimator}
  Suppose Assumptions~\ref{asm:r-asm},~\ref{asm:resolvent-approx}
  and~\ref{asm:source-ee} hold.  Then, there is an $h_0>0$ and $C>0$
  such that for all $h < h_0$,
  \[
    \gap_V(E_h, E) \le C \| \EE \hat e_h\|_Y
  \]
  where the estimator $\EE \hat e_h$ is computed using (only) the approximate eigenspace~$E_h$.
\end{theorem}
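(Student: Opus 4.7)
The plan is to chain the previously established results (Lemmas~\ref{lem:E=Er} and~\ref{lem:gap-bound}) with Assumption~\ref{asm:source-ee}, and then to recognize the resulting supremum over the approximate eigenspace as the maximal Rayleigh quotient of a small Hermitian generalized eigenproblem.

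First, since Assumption~\ref{asm:r-asm} holds, Lemma~\ref{lem:E=Er} gives $E=E_r$, so $\gap_V(E_h,E)=\gap_V(E_h,E_r)$, and Lemma~\ref{lem:gap-bound} applies under Assumption~\ref{asm:resolvent-approx} to yield
\[
  \gap_V(E_h,E) \;\le\; C_r \sup_{e_h\in E_h,\ \|e_h\|_V=1}\|(S_r-\Srh)e_h\|_V
\]
for all sufficiently small $h$. The next step is to apply Assumption~\ref{asm:source-ee} pointwise to each normalized $e_h\in E_h$. For this, I need to justify that $E_h\subseteq V_h$ so that $\EE e_h$ is defined. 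This follows because any eigenvector $v$ of $\Srh$ for an eigenvalue $\mu\ne\omega_0$ satisfies $(\mu-\omega_0)v=\sum_j\omega_j R_h(z_j)v\in V_h$; by Assumption~\ref{asm:resolvent-approx}, the eigenvalues of $\Srh$ enclosed by $\vT$ converge to those of $S_r$, which lie in $\vU$ and are bounded away from $\omega_0$ by Assumption~\ref{asm:r-asm}, so shrinking $h_0$ if necessary ensures every such eigenvalue of $\Srh$ differs from $\omega_0$, whence $E_h\subseteq V_h$. Then Assumption~\ref{asm:source-ee} gives $\|(S_r-\Srh)e_h\|_V\le C_0\|\EE e_h\|_Y$, so
\[
  \gap_V(E_h,E) \;\le\; C_r C_0 \sup_{e_h\in E_h,\ \|e_h\|_V=1}\|\EE e_h\|_Y.
\]

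The remaining task is to identify this supremum with $\|\EE\hat e_h\|_Y$. Writing any $e_h\in E_h$ in the basis as $e_h=\sum_{i=1}^L x_i e_h^i$ with coefficient vector $x\in\CC^L$, the definitions~\eqref{eq:5} give
\[
  \|e_h\|_V^2 = x^* M x, \qquad \|\EE e_h\|_Y^2 = x^* G x,
\]
where both $M$ and $G$ are Hermitian and $M$ is positive definite (the $e_h^i$ are a basis). Hence
\[
  \sup_{e_h\in E_h,\ \|e_h\|_V=1}\|\EE e_h\|_Y^2
  \;=\; \max_{x\ne 0}\frac{x^* G x}{x^* M x}
  \;=\; \hat\lambda,
\]
by the min-max characterization of the largest eigenvalue of the generalized eigenproblem $Gx=\lambda Mx$, attained at the eigenvector $\hat x$. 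The maximizer is $\tilde e_h$ of~\eqref{eq:ehmax}, and normalizing gives $\hat e_h$ with $\|\hat e_h\|_V=1$ and $\|\EE\hat e_h\|_Y^2=\hat\lambda$. Thus
\[
  \sup_{e_h\in E_h,\ \|e_h\|_V=1}\|\EE e_h\|_Y \;=\; \|\EE\hat e_h\|_Y,
\]
and setting $C=C_r C_0$ completes the bound. That $\hat e_h$ depends only on $E_h$ (and not on the basis chosen) is immediate since it realizes the maximum of the basis-free Rayleigh quotient $\|\EE e_h\|_Y/\|e_h\|_V$ over $E_h$.

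The main obstacle is essentially bookkeeping rather than any substantial new estimate: the verification that $E_h\subseteq V_h$ for small $h$ must be handled carefully (using both parts of Assumption~\ref{asm:r-asm} together with the operator-norm convergence~\eqref{eq:1}), and one must be attentive that the Rayleigh-quotient reduction uses positive-definiteness of $M$, which holds precisely because $\{e_h^i\}$ is a basis and $\|\cdot\|_V$ is a norm on the finite-dimensional space $E_h$.
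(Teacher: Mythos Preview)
Your proof is correct and follows essentially the same route as the paper: invoke Lemma~\ref{lem:E=Er} to identify $E=E_r$, apply Lemma~\ref{lem:gap-bound}, feed Assumption~\ref{asm:source-ee} into the supremum, and recognize the resulting expression as the maximal generalized Rayleigh quotient attained at~$\hat e_h$. Your extra verification that $E_h\subseteq V_h$ is a welcome addition the paper leaves implicit; note only that your argument as written treats eigenvectors, whereas $E_h$ may contain generalized eigenvectors---the same identity $(\mu-\omega_0)v=\sum_j\omega_j R_h(z_j)v - w$ with $w=(S_{r,h}-\mu)v$ already in $V_h$ handles Jordan chains by induction, so the inclusion still holds.
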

\begin{proof}
  Since Assumption~\ref{asm:r-asm} holds, by Lemma~\ref{lem:E=Er}, we
  have $E = E_r$. Hence verifying the conditions of
  Lemma~\ref{lem:gap-bound} (by virtue of the remaining two
  assumptions), its estimate~\eqref{eq:gapEEh} implies
  \begin{align}
    \nonumber 
    \gap_V( E, E_h)
    & = \gap_V( E_r, E_h) \le C_r
    \sup_{ e_h \in E_h, \; \| e_h \|_V = 1 }
      \left\| ( S_r - \Srh) e_h \right\|_V
    \\ \label{eq:supEE}
    &
      \le C_rC_0 \sup_{ e_h \in E_h}
      \frac{\left\| \EE e_h \right\|_Y}{\| e_h \|_V}.
  \end{align}
  Writing any $e_h \in E_h$ as $e_h = \sum_{i=1}^L x_i e_h^i$, we find that
  \begin{align*}
    \frac{\left\| \EE e_h \right\|_Y^2}{\| e_h \|_V^2}
    = \frac{x^* G x}{ x^*M x}, 
  \end{align*}
  which is the Rayleigh quotient of the eigenproblem
  $G x = \lambda Mx$. It is maximized by an eigenvector $\hat x$ of
  its maximal eigenvalue $\hat \lambda$. Hence the supremum in
  \eqref{eq:supEE} is attained by $\hat e_h$ of~\eqref{eq:ehmax} and
  the result follows with $C = C_rC_0$.
\end{proof}

We conclude this section with a discussion of
Theorem~\ref{thm:espace-gap-estimator}, a  result  useful for
adaptive algorithms for eigenvalue clusters.  As seen
from the above proof, the computable source estimator $\EE \hat e_h$ depends
only on the computed eigenspace $E_h$, and not on individual
eigenvectors within it. In this sense, our estimator is cluster
robust.   Computing a supremum over a finite dimensional
approximate space for error estimation, as used in the above proof,
is a technique that can also be found in other works, e.g.,
in~\cite{LiuVej2022-1}, where the authors develop estimates of gap between exact and
approximate eigenspaces of a generalized selfadjoint eigenvalue
problem. The idea of using source problem error estimates for
eigenvalue adaptivity can also be found in the works
of~\cite{GruOva2009-1, BanGruOva2013-1,%
  GianiEtAl2016-1, GiaGruHakOva2021-1}, where residuals generated by
solution operators, applied to approximate eigenfunctions, feature
prominently.  Such residuals are then further estimated by using an
auxiliary space (e.g., appropiate bubble spaces in the finite element
context), which then provide control of relative eigenvalue errors.
Specific new examples of locally computable $\EE$ for a selfadjoint problem appear in the
next section (Section~\ref{sec:application}) and a nonselfadjoint example can be found in Section~\ref{sec:numerical-results}.



\section{Application to finite element discretizations} \label{sec:application}

In this section, we apply the framework for eigenspace error
estimators we developed in Section~\ref{sec:eigenspace-error-estimate},
specifically Theorem~\ref{thm:espace-gap-estimator}, to some finite element examples. We illustrate how to verify
Assumptions~\ref{asm:resolvent-approx} and~\ref{asm:source-ee} for a
typical partial differential operator. (Verification of the only other
assumption in the theorem, namely Assumption~\ref{asm:r-asm}, was
already illustrated in prior examples---see for instance 
Example~\ref{ex:Butterworth}.)

Throughout this section, we set
\[
  \Hc = L^2(\Omega), \qquad V = \mathring{H}^1(\Omega), \qquad \Ac = -\Delta,
\]
with $\dom (\Ac)= \{ u \in V: \Delta u \in L_2(\om)\} $.
Here $\om$ is a Lipschitz polygonal subset of $\RR^2$ and we have used
standard notation for Sobolev spaces, e.g., the space $L^2(\om)$
denotes the Hilbert space of square-integrable functions on \(\Omega\)
endowed with the standard inner product, and
\(\mathring{H}^1(\Omega)\) denote its Sobolev subspace functions with
square-integrable weak derivatives and vanishing trace on the boundary
\(\partial \Omega\).  Our interest is in approximating eigenspaces of
some targeted eigenvalue clusters of~$\Ac$.

Given any $f \in \Hc$, the application of the resolvent at a
$z \in \rho(\Ac)$ produces $u = R(z) f$ that solves the Dirichlet problem 
\begin{align}\label{eq:elliptic-resolvent}
  z u + \Delta u &= f \quad \text{in } \Omega, 
  & u & = 0 \quad \text{on } \partial \Omega.
\end{align}
Its weak formulation is to find  $u \in \Ho^1(\om)$ satisfying
\begin{equation}
  b_z(u, v) = (f, v)_{L^2}, \qquad \tForAll v \in \mathring{H}^1(\Omega),
  \label{eq:elliptic-weak}
\end{equation}
where the sesquilinear form \(b_z\) is defined by
\begin{equation}
  b_z(u, v) = z (u, v)_{L^2}
  -(\nabla u, \nabla v)_{L^2}
  \label{eq:elliptic-bilinear}
\end{equation}
for any $ u, v \in \mathring{H}^1(\Omega).$ Here and throughout,
$(\cdot, \cdot)_{L^2}$ and $\| \cdot \|_{L^2}$ denote the inner
product and norm, respectively, on $L^2(\om)$ or its Cartesian
products.  The inf-sup condition holds for $b_z(\cdot, \cdot)$, i.e., 
\begin{equation}
  \label{eq:Laplace-inf-sup'}
  \sup_{v \in \mathring{H}^1(\Omega)}
  \frac{b_z(w, v)}{\| \nabla v \|_{L^2}}
  \geq
  \frac{1}{\beta_z}
  \norm{\nabla w}_{L^2
  }, 
\end{equation}
holds for all $w \in \mathring{H}^1(\Omega) $ with
$ \beta_z = \sup_{\lambda \in \sigma(\Ac)} \left\lvert
  \frac{\lambda}{\lambda - z} \right\rvert$
as shown, e.g.,  in~\cite[Lemma 3.1]{GopGruOvaPar2019-1}.
 Moreover, by our
assumption on the boundary of $\om$, it consists of smooth segments
intersecting at finitely many vertices and there is some
$1/2 < \alpha$ such the largest interior angle subtended at such
vertices is $\pi/\alpha$.  Then for any $s < \alpha$, standard
elliptic regularity theory \cite{Grisv85} shows that there is a
constant $\Cregz >0$ such that
\begin{equation}
  \label{eq:regularity}
  \| R(z) f \|_{H^{1+s}(\om)} \le \Cregz \| f \|_{\Hc},
  \qquad \text{ for all }  f \in \Hc \text { and } z \in \rho(\Ac).
\end{equation}
This gives the stability of solutions of~\eqref{eq:elliptic-weak}.
Note that \(V=\Ho^1(\om)\) is endowed with the $H^1(\om)$-norm, so it is continuously embedded into \(\Hc\), since \(\|u\|_{\Hc} = \|u\|_{L_2} \leq \|u\|_{H^1(\Omega)}\)
for all \(u \in V\).
Moreover, \(R(z)V \subset V\) for all \(z \in \rho(\Ac)\), since   \(\dom (\Ac)\) is contained in \(V\) in this example.

We now proceed to discuss discretization by FOSLS (First-Order Systems
Least Squares) and DPG (Discontinuous Petrov-Galerkin) methods. The
FOSLS method was developed in~\cite{CaiEtAl1994-1, CaiEtAl1997-1} and
a detailed account of its development can be found in the
book~\cite{BocheGunzb09a}. The DPG method and its error estimators
were developed in~\cite{DemkoGopal11,
  DemkoGopalNiemi12,CarDemGop2014-1} and a recent
review~\cite{DemGop2025-1} summarizes these developments.  In both
cases, we use \(\Omega_h\), a conformal simplicial finite element mesh
of \(\Omega\), that is decomposed into different elements \(K\).
Given \(K \subset \Omega\) and an integer \(k \geq 0\), we denote by
\(P^k(K)\) the space of polynomials of degree at most \(k\) defined on
\(K\), and by \(P_\mathsf{H}^k(K)\) the space of homogeneous
polynomials of degree \(k\) defined on \(K\).  The local
Raviart-Thomas space on an element $K$ equals
\(RT^k(K) = P^k(K)^d \oplus P_\mathsf{H}^k(K) {x}\), where
\({x} = (x_1, \ldots, x_d)\) is the position vector in \(K\).  We
adopt the convention of writing $B_1 \lesssim B_2$ whenever the 
inequality $B_1 \le C B_2$ holds with some mesh-independent constant $C$
whose value at different occurrences may differ.

\subsection{Eigenspace error estimation using FOSLS estimators}
\label{ssec:FOSLS}

The FOSLS discretization of the resolvent operator of $\Ac$ starts
with a first-order reformulation of~\eqref{eq:elliptic-resolvent} by
introducing the flux $q = - \nabla u$.
Let $X:=  H(\diver) \times \mathring{H}^1(\Omega)$.
Then  defining  the first-order operator 
$  \Af_z: X \to 
  L_2(\Omega)^2 \times L_2(\Omega)
$ by 
\[
  \Af_z (q, u) = ( q + \nabla u,
  \;
  -\diver q + z u)
\]
the resolvent problem~\eqref{eq:elliptic-resolvent} takes the form
\begin{equation}
  \label{eq:14}
  (q, u) \in X: \qquad 
  \Af_z(q, u) = (0, f),
\end{equation}
for any $f \in \Hc$. This identifies, in addition to $R(z) f$, a mapping into the
flux component, which we denote by $R^q(z) f$, i.e., 
\[
  \begin{tikzcd}
    f 
    \ar[r, mapsto, "R(z)"]
    &
    {u\text{ solving~\eqref{eq:14}}},
    &
    \qquad
    &
    f
    \ar[r, mapsto, "R^q(z)"]
    & {q\text{ solving~\eqref{eq:14}.}}
  \end{tikzcd}
\]
For any integer 
\(k \geq 1\), consider the Lagrange and the
Raviart-Thomas finite element spaces given by
\begin{align*}
  V_h & =  \set*{
    v \in \mathring{H}^1(\Omega) : v\restriction_K \in P^k (K),
    \tForAll K \in \Omega_h
  }, \\
  RT_h & = \set*{
    q \in H(\diver,\Omega) : q\restriction_K \in RT^{k-1} (K),
    \tForAll K \in \Omega_h
  },
\end{align*}
respectively (see, e.g.,~\cite{ErnGue2021-2, BreFor2012-1}) and put
\(X_h = RT_h \times V_h\). 

The two ingredients needed in our framework, namely $R_h(z)$ and
$\EE$, are now set as follows.
Set the  approximate resolvent operator
$R_h(z): \Hc \to V_h$ as the second component of the unique
$(q_h, u_h) \in X_h$ that solves the FOSLS equation
\begin{equation}
  \label{eq:FOSLS-normal-eq}
  \left(
    \Af_z (q_h, u_h), \Af_z (r_h, v_h)
  \right)_{L_2}
  = \left(
    (0, f), \Af_z (r_h, v_h)
  \right)_{L_2}, 
\end{equation}
for all $ (r_h, v_h) \in X_h,$ i.e., $R_h (z) f := u_h$. Let us also
denote the flux component of the solution by $R_h^q(z) f := q_h$, i.e., 
\[
  \begin{tikzcd}
    f 
    \ar[r, mapsto, "R_h(z)"]
    &
    {u_h\text{ solving~\eqref{eq:FOSLS-normal-eq}}},
    &
    \qquad
    &
    f
    \ar[r, mapsto, "R^q_h(z)"]
    & {q_h\text{ solving~\eqref{eq:FOSLS-normal-eq}.}}
  \end{tikzcd}
\]
Next, to define the error estimator \(\EE:V_h \to Y\), setting
\(Y = \left[ L_2(\Omega)^2 \times L_2(\Omega) \right]^\nquad\), we
consider, for each pole $z_k$ of the rational function
in~\eqref{eq:feast-filter}, an operator
$\EE_k: V_h \to L_2(\Omega)^2 \times L_2(\Omega)$ defined by
\begin{subequations}
  \label{eq:FOSLS-EE}
  \begin{align}
    \EE_k v_h & = (0, v_h) - \Af_{z_k} (R_h^q(z_k) v_h, R_h(z_k) v_h)              
  \end{align}
  for any \(v_h \in V_h\) and set 
  \begin{equation}
    \label{eq:FOSLS-EE-defn}
    \EE v_h  = \left( \EE_1 v_h, \ldots, \EE_\nquad v_h \right) \in Y.
  \end{equation}
\end{subequations}
Note that
\( \norm{\EE v_h}_{Y}^2 = \sum_{k=1}^\nquad \norm{\EE_k
  v_h}^2_{L_2} 
\).  With these definitions in place, we now proceed to verify 
Assumptions~\ref{asm:resolvent-approx} and~\ref{asm:source-ee}.

\begin{lemma}
  \label{lem:Laplace-elliptic'}
  For any \(z \in \rho(\Ac)\), there exists \(\gamma_z > 0\) such that, for all
  \(u \in \mathring{H}^1(\Omega)\) and \(q \in H(\diver)\), we have
  \begin{equation}
    \label{eq:Laplace-elliptic'}
    \norm{q}_{L_2
    }^2 +
    \norm{u}_{H^1(\Omega)
    }^2 
    \leq \gamma_z \norm{\Af_z (q, u)}_{L_2}^2.
  \end{equation}
  Moreover, the operator \(\Af_z\) is a continuous bijection.
\end{lemma}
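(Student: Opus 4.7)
The plan is to reduce the first-order system encoded by $\Af_z$ to the scalar weak problem~\eqref{eq:elliptic-weak} and invoke the inf-sup bound~\eqref{eq:Laplace-inf-sup'} already established. Continuity of $\Af_z : X \to L^2(\Omega)^2 \times L^2(\Omega)$ is immediate from the triangle inequality, since $\norm{\Af_z(q,u)}_{L^2}^2 \le 2\norm{q}_{L^2}^2 + 2\norm{\nabla u}_{L^2}^2 + 2\norm{\diver q}_{L^2}^2 + 2\abs{z}^2 \norm{u}_{L^2}^2$. The substantive part is the one-sided estimate~\eqref{eq:Laplace-elliptic'}: from it, injectivity is immediate, and surjectivity will be produced as a by-product of the same construction.

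Given $(q, u) \in X$, I would set $(f_1, f_2) = \Af_z(q, u)$ so that $q = f_1 - \nabla u$ and $-\diver q + z u = f_2$. Testing the second relation against arbitrary $v \in \Ho^1(\Omega)$ and eliminating $q$ via the first, I obtain the weak equation
\[
  b_z(u, v) = (f_2, v)_{L^2} - (f_1, \nabla v)_{L^2}, \qquad \tForAll v \in \Ho^1(\Omega).
\]
The right-hand side is a bounded antilinear functional on $\Ho^1(\Omega)$ whose norm, measured against $\norm{\nabla v}_{L^2}$, is controlled by $\norm{f_1}_{L^2} + C_P \norm{f_2}_{L^2}$, with $C_P$ the Poincar\'e constant. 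Then~\eqref{eq:Laplace-inf-sup'} yields $\norm{\nabla u}_{L^2} \lesssim \beta_z \norm{(f_1, f_2)}_{L^2}$, which combined with Poincar\'e gives $\norm{u}_{H^1(\Omega)} \lesssim \beta_z \norm{(f_1, f_2)}_{L^2}$, and the identity $q = f_1 - \nabla u$ then forces $\norm{q}_{L^2} \le \norm{f_1}_{L^2} + \norm{\nabla u}_{L^2} \lesssim \beta_z \norm{(f_1, f_2)}_{L^2}$. Squaring and adding produces~\eqref{eq:Laplace-elliptic'} with $\gamma_z$ proportional to $\beta_z^2$. The same weak problem, read in reverse, supplies surjectivity: given any $(f_1, f_2)$, \eqref{eq:Laplace-inf-sup'} provides a unique $u \in \Ho^1(\Omega)$ solving it, after which $q := f_1 - \nabla u$ has $\diver q = z u - f_2 \in L^2(\Omega)$, so $q \in \hdiv{\Omega}$ and $\Af_z(q, u) = (f_1, f_2)$ holds by construction.

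The only real technical point is verifying that the antilinear form on the right-hand side lies in the dual space appropriate for~\eqref{eq:Laplace-inf-sup'} and tracking how its norm feeds into $\gamma_z$; beyond this, everything is bookkeeping of constants via the triangle inequality and Poincar\'e. I do not expect a genuine obstacle: the result is essentially an equivalent rewriting of the stability of~\eqref{eq:elliptic-weak} in first-order system form, packaged so that it can later be used to compare $R(z_k)$ with its FOSLS approximation $R_h(z_k)$.
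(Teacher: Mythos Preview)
Your proposal is correct and takes essentially the same approach as the paper: reduce to the scalar weak formulation by eliminating $q$, invoke the inf-sup condition~\eqref{eq:Laplace-inf-sup'} together with Poincar\'e to bound $\|u\|_{H^1}$, then read off $\|q\|_{L^2}$ from $q = f_1 - \nabla u$. The paper phrases the right-hand side of the weak equation as $(f + \diver g)(v)$ with $\diver g \in H^{-1}(\Omega)$ rather than your $(f_2, v) - (f_1, \nabla v)$, but these are the same functional after an integration by parts, and both arguments deduce surjectivity from well-posedness of~\eqref{eq:elliptic-weak}.
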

\begin{proof}
  Let \(u \in \mathring{H}^1(\Omega)\) and \(q \in H(\diver)\) and let
  \(g \in L_2(\Omega)^2\) and \(f \in L_2(\Omega)\) be such that
  \(\Af_z (q, u) = (g, f)\). This is a  system of two equations, from which 
  eliminating $q$, we obtain 
  $
    z u - \diver(g - \nabla u) = f.
  $
  Rearranging,
  \[
    (z + \Delta) u  = f + \diver g,
  \]
  which, due to the boundary condition on $u$, has the following
  associated weak formulation
  \[
    b_z(u,v) = (f + \diver g)(v), \qquad \tForAll v \in \mathring{H}^1(\Omega),
  \]
  where, since the distributional divergence of $g$ is in $H^{-1}(\om)$, we have treated $f + \diver g$ as a bounded linear functional on $\Ho^1(\om)$.
  By the inf-sup condition~\eqref{eq:Laplace-inf-sup'} and
  the Poincaré inequality, we have
  \begin{align*}
    \norm{ u}_{H^1(\om)}
     & \lesssim \sup_{v \in H^1(\Omega)}
     \frac{b_z(u, v)}{\norm{v}_{\mathring{H}^1(\Omega)}} 
       = \sup_{v \in H^1(\Omega)}
       \frac{(f + \diver g)(v)}{\| v \|_{H^1(\Omega)}}
    \\
     & \leq \norm{f}_{H^{-1}(\Omega)} + \norm{\diver g}_{H^{-1}(\Omega)}
       \lesssim \norm{f}_{L_2}+ \norm{g}_{L_2}
    \\
     & \lesssim \norm{\Af_z (q, u)}_{L_2}. 
  \end{align*}
  Since $\Ac(q, u) = (g, f)$ also  implies that $q + \nabla u = g$, the above inequality also implies
  \begin{align*}
    \norm{q}_{L_2}
&       \leq \norm{g}_{L_2} + \norm{\nabla u}_{L_2}
      \lesssim \norm{f}_{L_2}+ \norm{g}_{L_2}
    \\
    & 
      \lesssim \norm{\Af_z (q, u)}_{L_2}^2. 
  \end{align*}
  This proves~\eqref{eq:Laplace-elliptic'}.  In particular, it shows
  that the operator \(\Af_z\) is continuous and injective.  The
  equivalence of the weak formulation~\eqref{eq:elliptic-weak} and the
  boundary value problem~\eqref{eq:elliptic-resolvent} implies that
  \(\Af_z\) is also surjective.
\end{proof}

\begin{proposition}
  The FOSLS resolvent approximation $R_h(z)$ converges in operator norm on $V$,
  i.e., Assumption~\ref{asm:resolvent-approx} holds.
\end{proposition}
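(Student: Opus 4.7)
The plan is to exploit the fact that the FOSLS scheme is a Galerkin discretization with respect to the inner product $a_z((q,u),(r,v)) := (\Af_z(q,u), \Af_z(r,v))_{L_2}$ on $X := H(\diver,\om) \times \Ho^1(\om)$. By Lemma~\ref{lem:Laplace-elliptic'}, $a_z$ induces a norm equivalent to the canonical norm on $X$, so~\eqref{eq:FOSLS-normal-eq} is uniquely solvable and C\'ea's lemma yields the quasi-best approximation bound
\[
  \|(q - q_h, u - u_h)\|_X \,\lesssim\, \inf_{(r_h, v_h) \in X_h} \|(q - r_h, u - v_h)\|_X,
\]
where $(q,u) = (R^q(z)f, R(z)f)$ and $(q_h,u_h) = (R^q_h(z)f, R_h(z)f)$. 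Since $\|u - u_h\|_V \le \|(q - q_h, u - u_h)\|_X$, it suffices to show that the right-hand side tends to zero uniformly over the unit ball $\{f \in V : \|f\|_V \le 1\}$.

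For such $f$, I would apply the regularity estimate~\eqref{eq:regularity} with an exponent $s$ satisfying $1/2 < s \le \min(\alpha, 1)$, which is admissible because the standing assumption $\alpha > 1/2$ ensures that such $s$ exists. This yields $u = R(z)f \in H^{1+s}(\om)$ with $\|u\|_{H^{1+s}(\om)} \le \Cregz \|f\|_{\Hc} \le \Cregz \|f\|_V$, so $q = -\nabla u \in H^s(\om)^2$ with $\|q\|_{H^s(\om)} \lesssim \|f\|_V$, and $\diver q = zu - f \in H^1(\om)$ with $\|\diver q\|_{H^1(\om)} \lesssim \|f\|_V$, where the latter uses both $u \in H^1(\om)$ and $f \in V = \Ho^1(\om)$.

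Next I would take $(r_h, v_h) \in X_h$ to be the Scott--Zhang (or nodal Lagrange) interpolant $I_h u \in V_h$ for the primal variable together with the canonical Raviart--Thomas interpolant $\Pi_h q \in RT_h$ for the flux. Standard Bramble--Hilbert estimates (valid since $s > 1/2$) then yield $\|u - I_h u\|_{H^1(\om)} \lesssim h^s \|u\|_{H^{1+s}(\om)}$ and $\|q - \Pi_h q\|_{L_2} \lesssim h^s \|q\|_{H^s(\om)}$, while the commuting property $\diver \Pi_h q = P_h \diver q$ (with $P_h$ the $L_2$-projection onto $\diver RT_h$) combined with $\diver q \in H^1$ gives $\|\diver q - \diver \Pi_h q\|_{L_2} \lesssim h \|\diver q\|_{H^1(\om)}$. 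Assembling these bounds delivers $\|(q - q_h, u - u_h)\|_X \lesssim h^s \|f\|_V$, from which $\|R_h(z) - R(z)\|_V \lesssim h^s \to 0$ follows.

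The principal obstacle is controlling the divergence component of the $X$-norm of the interpolation error, since the classical Raviart--Thomas interpolant requires face-trace degrees of freedom that are only well-defined on $H^s$ for $s > 1/2$. This forces us to invoke the regularity estimate~\eqref{eq:regularity} with an exponent strictly greater than $1/2$---guaranteed by the standing assumption $\alpha > 1/2$---and to exploit the extra smoothness available from $f \in V$ so that $\diver q = zu - f$ belongs to $H^1(\om)$ rather than merely $L_2$. A more technical alternative would employ smoothed Raviart--Thomas quasi-interpolants defined on $L_2$, but the approach sketched above is adequate for the stated convergence.
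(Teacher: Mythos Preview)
Your proposal is correct and follows essentially the same route as the paper: Galerkin orthogonality for the least-squares form, the coercivity from Lemma~\ref{lem:Laplace-elliptic'} to pass to the $H^1$-error, the regularity estimate~\eqref{eq:regularity} for $u$ and hence $q=-\nabla u$, the identity $\diver q = zu - f$ together with $f\in V$ for the divergence piece, and standard Lagrange/Raviart--Thomas interpolation bounds. Two minor points: Lemma~\ref{lem:Laplace-elliptic'} as stated controls only $\|q\|_{L_2}$, not the full $H(\diver)$ norm (though the latter follows immediately from the second component of $\Af_z$), and the regularity exponent should satisfy $s<\alpha$ strictly rather than $s\le\alpha$.
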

\begin{proof}
  Let $f \in V$,  $u = R(z) f$,  and $u_h = R_h(z) f$.
  Since $V_h \subset V$, by Galerkin orthogonality
  \begin{align*}
    \norm*{\Af_z (q - q_h, u - u_h)}_{L_2}^2
    & =
      \left(
      \Af_z(q - q_h, u - u_h), \Af_z(q - r_h, u - w_h)
      \right)_{L_2}
    \\
    & \lesssim
      \| \Af_z(q - q_h, u - u_h)\|_{L_2}
      \Big(
      \| q - r_h \|_{H(\diver)}  + \| u - w_h \|_{H^1(\om)} 
      \Big).
  \end{align*}
  By standard finite element best approximation estimates for the Lagrange
  and Raviart-Thomas spaces, this leads to 
  \begin{equation}    
    \label{eq:7-Az}
    \norm*{\Af_z (q - q_h, u - u_h)}_{L_2}
    \lesssim h^r |u |_{H^{r+1}(\om} + h^r |q|_{H^r(\om)} + h^r | \diver q |_{H^r(\om)}
  \end{equation}
  for some $0 < r \le 1$. Choosing $r\le s$, where $s$ is as
  in~\eqref{eq:regularity}, the seminorms on the right hand side above can
  be bounded as follows:
  \begin{gather*}
    |u|_{H^{r+1}(\om)} + |q|_{H^{r}(\om)}
    \lesssim \| u \|_{H^{s+1}(\om)} \lesssim
      \| f \|_{\Hc} \lesssim \| f \|_V,
    \\
    |\diver q|_{H^{r}(\om)}
    = |z u  - f|_{H^r(\om)} \lesssim \| f \|_{H^r(\om)} \lesssim \| f \|_V,
  \end{gather*}
  where in the last step we have used that $r \le 1$ as well as the
  first bound for~$u$.  Using these estimates in~\eqref{eq:7-Az},
  \[
    \norm*{\Af_z (q - q_h, u - u_h)}_{L_2}
    \lesssim\, h^r \| f \|_V.
  \]
  By Lemma~\ref{lem:Laplace-elliptic'},
  $    \norm*{u - u_h}_{H^1(\Omega)}^2
  + \norm*{q - q_h}_{L_2}^2
  \leq
  \gamma_z \norm*{\Af_z (q - q_h, u - u_h)}_{L_2}^2,
  $
  so we conclude that for any $f \in V$, 
  \begin{align}
    \label{eq:8}
    \| R(z) f - R_h(z) f\|_V
    & = \| u - u_h \|_{H^1(\om)}
      \le \gamma_z^{1/2} \| \Af_z( q - q_h, u - u_h) \|_{L_2} 
      \lesssim h^r \| f \|_V,    
  \end{align}
  thus verifying Assumption~\ref{asm:resolvent-approx} as $h \to 0$. 
\end{proof}

\begin{proposition}
  The FOSLS error estimator $\EE$ in \eqref{eq:FOSLS-EE} satisfies
  Assumption~\ref{asm:source-ee}.
\end{proposition}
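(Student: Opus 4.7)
The plan is to reduce the verification of Assumption~\ref{asm:source-ee} to a residual identity together with the ellipticity estimate of Lemma~\ref{lem:Laplace-elliptic'}. For any $v_h \in V_h$, let $(q, u) = (R^q(z_k) v_h, R(z_k) v_h) \in X$ be the true first-order solution and $(q_h, u_h) = (R_h^q(z_k) v_h, R_h(z_k) v_h) \in X_h$ its FOSLS approximation. Since $\Af_{z_k}(q, u) = (0, v_h)$ by definition of the continuous resolvent, the estimator satisfies the crucial identity
\[
  \EE_k v_h \;=\; (0, v_h) - \Af_{z_k}(q_h, u_h) \;=\; \Af_{z_k}(q - q_h, \, u - u_h),
\]
so $\EE_k v_h$ is precisely the residual realized as $\Af_{z_k}$ applied to the FOSLS error.

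Next I would apply Lemma~\ref{lem:Laplace-elliptic'} with $z = z_k$ to the pair $(q - q_h, u - u_h) \in H(\diver) \times \mathring{H}^1(\Omega)$. This yields
\[
  \|u - u_h\|_{H^1(\Omega)}^2 + \|q - q_h\|_{L_2}^2 \;\le\; \gamma_{z_k}\,\|\Af_{z_k}(q - q_h, u - u_h)\|_{L_2}^2 \;=\; \gamma_{z_k}\,\|\EE_k v_h\|_{L_2}^2,
\]
and in particular, since $V = \mathring{H}^1(\Omega)$ is equipped with the $H^1$-norm,
\[
  \|R(z_k) v_h - R_h(z_k) v_h\|_V \;\le\; \gamma_{z_k}^{1/2}\,\|\EE_k v_h\|_{L_2}.
\]

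Finally I would sum over poles. Subtracting \eqref{eq:Srh-defn} from \eqref{eq:Sr-defn} and applying the triangle inequality followed by the Cauchy--Schwarz inequality gives
\[
  \|S_r v_h - \Srh v_h\|_V \;\le\; \sum_{j=1}^N |\omega_j|\,\gamma_{z_j}^{1/2}\,\|\EE_j v_h\|_{L_2} \;\le\; \Bigl(\sum_{j=1}^N |\omega_j|^2 \gamma_{z_j}\Bigr)^{1/2} \|\EE v_h\|_Y,
\]
which is precisely \eqref{eq:EE} with $C_0 = \bigl(\sum_{j=1}^N |\omega_j|^2 \gamma_{z_j}\bigr)^{1/2}$.

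There is no serious obstacle: the only substantive input is the inf-sup/coercivity bound already proved in Lemma~\ref{lem:Laplace-elliptic'}, and the rest amounts to recognizing the estimator as a residual and combining the per-pole bounds. The one point to state carefully is that $C_0$ depends on the poles $z_j$ and weights $\omega_j$ but not on $h$ or $v_h$, so the assumption is satisfied uniformly in the discretization.
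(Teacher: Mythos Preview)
Your proof is correct and follows essentially the same route as the paper: identify $\EE_k v_h$ with the residual $\Af_{z_k}(q-q_h,u-u_h)$, apply Lemma~\ref{lem:Laplace-elliptic'} per pole, and combine via the triangle inequality. The only cosmetic difference is that you make the constant explicit via Cauchy--Schwarz, whereas the paper is content with~$\lesssim$.
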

\begin{proof}
  Let $v_h \in V_h$ and $z \in \rho (\Ac)$. Then
  $(q, u) = (R^q(z) v_h, R(z) v_h) \in X$ satisfies
  \begin{equation}
    \label{eq:17}
      \Af_z(q, u) = (0, v_h).
  \end{equation}
  Let
  $(q_h, u_h) = (R_h^q(z) v_h, R_h(z) v_h) \in X_h$.  
  By Lemma~\ref{lem:Laplace-elliptic'}, as in~\eqref{eq:8}, 
  \begin{align*}
    \| R(z) f - R_h(z) f\|_V^2
    & = \| u - u_h \|_{H^1}^2
    \\
    &
      \le \gamma_z
      \left\| \Af_z( q - q_h, u- u_h) \right\|_{L_2}^2
    && \text{ by Lemma~\ref{lem:Laplace-elliptic'}}
    \\
    &
      = \gamma_z
      \left\| (0, v_h) -\Af_z( q_h, u_h) \right\|_{L_2}^2
    && \text{ by~\eqref{eq:17}}
    \\
    & = \gamma_z
    \left\| (0, v_h) -\Af_z\big( R_h^q(z) v_h, R_h(z) v_h\big) \right\|_{L_2}^2.      
  \end{align*}
  When this estimate is applied with $z = z_k$, the last term equals
  $\| \EE_k v_h \|_{L_2}^2$. Hence collecting the estimates for all
  the poles $z_k$, by triangle inequality, we obtain
  $ \| S_r v_h - \Srh v_h \|_V \lesssim \| \EE v_h\|_{Y}, $ which
  verifies Assumption~\ref{asm:source-ee}.
\end{proof}

\subsection{Eigenspace error estimation using DPG estimators}
\label{ssec:DPG}

To define the DPG discretization of the resolvent applied to an
$f \in \Hc$, reconsider the form $b_z(u,v)$
in~\eqref{eq:elliptic-bilinear} but now  computing derivatives of $v$ element-by-element and thereby extending it  to  $v$ in the space of piecewise polynomials of degree $k+3$ (without any interlement continuity),  namely for $v$ in
\[
  Y_h = \{ v \in L_2(\om): v\restriction_K
  \in P^{k+3}(K) \tForAll K \in \Omega_h\}.
\]
This is a subspace of the ``broken'' $H^1$ space $H^1(\om_h):= \prod_{K \in \om_h} H^1(K)$ where the inner product is 
\[
  (v, y)_{H^1(\om_h)} = \sum_{K \in \om_h} \int v\overline y\; dx
  +
  \int_K \nabla v \cdot  \nabla \overline y \; dx,
\]
for any $v, y \in Y_h$, where again, the derivatives are computed
element by element.  Let $n$ denote the unit outward normal vector on
element boundaries and let $\hat X_h = \{ \hat r_n: $ on the boundary
of every $K \in \om_h$,
$\hat r_n|_{\partial K} = (r \cdot n)|_{\partial K}$ for some
$r \in RT_h\}$.  Using the prior form $b_z(\cdot, \cdot)$ from \eqref{eq:elliptic-weak}, but now
extended to $V \times Y_h$ as mentioned above, we define
\[
  a_z((w, \hat r_n), v)
  = b_z(w, v) + \ip{\hat r_n, v}_h
\]
where
\[
  \ip{ \hat r_n, v}_h = \sum_{K \in \om_h} \int_{\partial K} \hat r_n \, \overline v \, ds.
\]
Given any \(f \in L_2(\Omega)\), the DPG discretization of the
resolvent problem $(z - \Ac) u = f$ finds a
$u_h \in V_h, \hat q_n \in \hat X_h$, and \(\varepsilon_h \in Y_h\)
such that
\begin{subequations}
\label{eq:elliptic-DPG}
\begin{align}
  \parens{\varepsilon_h, \delta_h}_{H^1(\Omega_h)} +
  a_z((u_h, \hat q_n), \delta_h)
  & = (f, \delta_h)_{L_2},
  \\
  \overline{a_z((w_h, \hat r_n), \varepsilon_h)}
  & = 0,
\end{align}
\end{subequations}
for all $\delta_h \in Y_h$ and
$ (w_h, \hat r_n) \in V_h \times \hat X_h.$ This system, known as the
primal DPG method, is well known to be uniquely solvable.  The mapping from $f$ 
to the solution component in $V_h$
defines the DPG resolvent approximation $R_h (z): \Hc \to V_h$. In addition, we also need the  map to the $\varepsilon_h$ solution component, which we denote
by $R_h^\varepsilon (z) f := \varepsilon_h$, i.e.,
\[
  \begin{tikzcd}
    f 
    \ar[r, mapsto, "R_h(z)"]
    &
    {u_h\text{ solving~\eqref{eq:elliptic-DPG}}},
    &
    \qquad
    &
    f
    \ar[r, mapsto, "R^\veps_h(z)"]
    & {\veps_h \text{ solving~\eqref{eq:elliptic-DPG}}}.
  \end{tikzcd}
\]
Using the resolvent approximation at each $z_k$, we define
the DPG error estimator by
\begin{equation}
  \label{eq:DPG-EE}
  \EE v_h  = \left( R_h^\varepsilon (z_1)  v_h, \ldots,
    R_h^\varepsilon (z_\nquad)  v_h \right).  
\end{equation}
With $Y$ set to the $N$-fold Cartesian product $Y = Y_h^N$, normed by
the product $H^1(\oh)$-norm defined by 
\begin{equation}
\label{eq:element-by-element-Y-dpg}
  \| (y_1, \dots, y_N)  \|_Y^2 = \sum_{k=1}^N \| y_k\|_{H^1(\oh)}^2
  = \sum_{k=1}^N\sum_{K \in \oh}  \| y_k\|_{H^1(K)}^2, 
\end{equation}
equation~\eqref{eq:DPG-EE}  defines $\EE: V_h \to Y$ and completes
the description of all DPG ingredients needed to fit the prior
framework.

\begin{proposition}
  The DPG resolvent approximation converges in operator norm on $V$
  and Assumption~\ref{asm:resolvent-approx} holds. Furthermore, the
  DPG error estimator $\EE$, set in \eqref{eq:DPG-EE}, satisfies
  Assumption~\ref{asm:source-ee}.
\end{proposition}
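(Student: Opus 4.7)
The plan is to handle the two claims in turn by exploiting the residual-minimization structure of the primal DPG method, with the enriched test space $Y_h$ of piecewise polynomials of degree $k+3$ playing the role that the Raviart--Thomas space played in the FOSLS analysis.

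For Assumption~\ref{asm:resolvent-approx}, I would invoke DPG quasi-optimality in the form developed in~\cite{DemGop2025-1,CarDemGop2014-1}. This reduces the proof to verifying two ingredients: first, boundedness and inf-sup stability of $a_z$ on $(V \times \hat X) \times H^1(\oh)$, where $\hat X$ is the normal trace space on the mesh skeleton, which follows by element-wise integration by parts together with~\eqref{eq:Laplace-inf-sup'} and~\eqref{eq:regularity}; and second, the existence of a uniformly bounded Fortin operator $\Pi_h : H^1(\oh) \to Y_h$ compatible with $a_z$, which is classical for the primal DPG method with test-space enrichment of degree $k+3$. These ingredients together yield the quasi-optimal bound
\[
  \| R(z_k) f - R_h(z_k) f \|_V \;\lesssim\; \inf_{(w_h, \hat r_n) \in V_h \times \hat X_h}
  \| (R(z_k) f - w_h,\; \hat q_n - \hat r_n) \|_{V \times \hat X},
\]
where $\hat q_n$ denotes the normal trace of $-\nabla R(z_k) f$. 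Standard approximation estimates in $V_h$ and $\hat X_h$, combined with~\eqref{eq:regularity} and the identity $\diver \nabla R(z) f = z R(z) f - f$ used to control divergence regularity exactly as in the FOSLS proof, give $\| R(z_k) f - R_h(z_k) f \|_V \lesssim h^r \| f \|_V$ for some $r \in (0, s]$, hence convergence in operator norm on $V$.

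For Assumption~\ref{asm:source-ee}, I would exploit the identity already built into~\eqref{eq:elliptic-DPG}: the component $\varepsilon_h = R_h^\varepsilon(z) v_h$ is precisely the $H^1(\oh)$-Riesz representative in $Y_h$ of the discrete residual functional $\delta_h \mapsto (v_h, \delta_h)_{L_2} - a_z((u_h, \hat q_n), \delta_h)$. With the inf-sup and Fortin ingredients from the previous paragraph in hand, the standard DPG a posteriori argument delivers reliability in the form
\[
  \| R(z) v_h - R_h(z) v_h \|_V \;\lesssim\; \| R_h^\varepsilon(z) v_h \|_{H^1(\oh)}.
\]
Applying this at each pole $z = z_k$, multiplying by $\omega_j$, summing over $j = 1, \dots, \nquad$, and invoking the definition~\eqref{eq:element-by-element-Y-dpg} of $\| \cdot \|_Y$ then yields $\| S_r v_h - \Srh v_h \|_V \lesssim \| \EE v_h \|_Y$, which is~\eqref{eq:EE}.

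The main obstacle I anticipate is the rigorous construction and uniform boundedness of the Fortin operator $\Pi_h$ into $Y_h$ when $a_z$ carries the complex shift $z$. For the unshifted Laplacian the construction is carried out explicitly in~\cite{CarDemGop2014-1,DemGop2025-1}; absorbing the shift only perturbs the continuity constants by factors depending on $z_k$ (reminiscent of $\Cregz$ and $\beta_z$) without altering the structural construction. Once this point is granted, the remainder of the argument---best-approximation estimates, use of~\eqref{eq:regularity}, and summation over the quadrature poles---is routine.
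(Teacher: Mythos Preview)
Your outline matches the paper's approach for both claims, and the treatment of Assumption~\ref{asm:resolvent-approx} via quasi-optimality, Fortin operator, and the regularity estimate~\eqref{eq:regularity} is essentially what the paper does by reference to~\cite{GopGruOvaPar2019-1}. For Assumption~\ref{asm:source-ee}, however, one specific step is missing.

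The reliability bound you write,
\[
  \| R(z) v_h - R_h(z) v_h \|_V \;\lesssim\; \| R_h^\varepsilon(z) v_h \|_{H^1(\oh)},
\]
is not what the standard DPG a~posteriori argument produces. Because $\varepsilon_h$ is the Riesz representative of the residual only in the \emph{discrete} test space $Y_h$, the Fortin-based reliability theorem (\cite[Theorem~6.4]{DemGop2025-1}, \cite{CarDemGop2014-1}) yields instead
\[
  \| R(z) v_h - R_h(z) v_h \|_V \;\lesssim\; \|\vpi\|\,\| R_h^\varepsilon(z) v_h \|_{H^1(\oh)} \;+\; \osc(v_h),
  \qquad
  \osc(v_h) = \sup_{y \in H^1(\oh)} \frac{(v_h,\, y - \vpi y)_{L_2}}{\|y\|_{H^1(\oh)}}.
\]
The paper eliminates this oscillation term by invoking the moment property of the Fortin operator, $\int_K (\vpi y - y)\,q\,dx = 0$ for all $q \in P^k(K)$: since the source $v_h \in V_h$ is elementwise in $P^k$, one gets $\osc(v_h) = 0$ exactly. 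This is precisely why repurposing the source estimator works cleanly here---the data in each resolvent solve is already discrete. Your proposal skips this point entirely; once you insert it, the rest of your argument (applying the bound at each pole $z_k$ and summing) goes through. Incidentally, the obstacle you anticipate about the shift $z$ in the Fortin construction is not really an issue: the same moment condition that kills the oscillation also absorbs the $z(u,v)_{L^2}$ term, so a single $z$-independent $\vpi$ suffices.
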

\begin{proof}
  The proof of the first statement is very similar to the proof of
  \cite[Lemma~3.4]{GopGruOvaPar2019-1} so we omit it. To prove the
  statement on $\EE$, we appeal to~\cite[Theorem~6.4]{DemGop2025-1}
  (see also~\cite{CarDemGop2014-1}), noting that all assumptions
  required for that result are also essentially verified there (see
  e.g., \cite[Example~4.2]{DemGop2025-1}). Hence the conclusion of
  that theorem on reliability of DPG error estimators give
  \begin{equation}
    \label{eq:19}
    \| R(z) v_h - R_h(z) v_h \|_{V} 
    \lesssim  \| \vpi \| \,\| R_h^\veps(z) v_h \|_{H^1(\om_h)} + \osc(v_h)
  \end{equation}
  for any $z \in \rho(\Ac)$ and $v_h \in V_h$, where 
  \[
    \osc(v_h) = \sup_{y \in H^1(\om_h)}
    \frac{ (v_h, y - \vpi y )_{L_2}}{ \| y \|_{H^1(\om_h)}}.
  \]
  Here $\vpi: H^1(\oh) \to Y_h$ is a Fortin operator satisfying
  $a_z( (w_h, \hat r_n), y - \vpi y )=0$ for all $y \in H^1(\oh)$, $w_h \in V_h$ and
  $ \hat r_n \in \hat X_h$, whose operator norm admits a
  meshsize-independent bound, i.e.,  $\|\vpi \| \lesssim 1$, and satisfies the
  moment condition
  \begin{align}
    \label{eq:20}
    \int_K (\vpi y - y) q \, dx & = 0, \qquad \text{ for all } q \in P^k(K),
  \end{align}
  on all elements $K \in \om_h$,
  as well as further moment conditions on  mesh edges---see e.g.,
  \cite[Theorem~5.4]{DemGop2025-1} or \cite{GopalQiu14} for further details.
  Observe that
  for any $v_h \in V_h$,
  \[
    (v_h, y - \vpi y )_{L_2} = 0 
  \]
  due to~\eqref{eq:20}. Hence $\osc(v_h) = 0$ and~\eqref{eq:19} implies
  \[
    \| R(z_k) v_h - R_h(z_k) v_h \|_{V} 
    \lesssim\| R_h^\veps(z_k) v_h \|_{H^1(\oh)}.
  \]
  Hence the definition of $\EE$ in~\eqref{eq:DPG-EE} and triangle
  inequality completes the verification of
  Assumption~\ref{asm:source-ee}.
\end{proof}

\section{Numerical results}

\label{sec:numerical-results}

In this section, we briefly present two numerical examples. The first
concerns  a selfadjoint Laplace eigenproblem on a interesting domain and falls
within what is covered by the theory in Section~\ref{sec:application}.
Here we report results on how the adaptive algorithm applied to two distinct
eigenvalue clusters with different regularity profiles captures distinct refinement patterns for each.
The second is a computational study of the proposed eigenspace error estimation
technique applied to a nonselfadjoint  Helmholtz eigenproblem for
leaky waveguide modes. Here, the adaptive algorithm finds an almost
circularly symmetric refinement pattern even though none  of
the two eigenmodes found have circular symmetry, indicating again
that refinement patterns target the cluster as a whole and not solely individual eigenfunctions.

In both examples, we consider  a discretization
using the DPG method described in
Section~\ref{ssec:DPG}. We use the element-by-element definition of
previously described eigenspace error estimator  to design an $h$-adaptive mesh refinement algorithm in a standard fashion  based on the
SOLVE$\rightarrow$ESTIMATE$\rightarrow$MARK$\rightarrow$REFINE paradigm (see, e.g.,~\cite{Ver1994-1}), described further below.
For adaptive mesh refinement, finite element assembly, and visualization, we use 
the NGSolve finite element library~\cite{Sch2025-1} and its implementation of
Lagrange finite elements of polynomial degree  \(k=5\).
For an eigensolver, 
we use a simple Python implementation of
the FEAST algorithm (available in~\cite{GopEtAl2025-1}), which 
generates a sequence of eigenspace iterates that converge
to the eigenspace approximation~$E_h$, represented computationally by
a set of basis vectors returned upon meeting a stopping criterion.


For the marking step in adaptivity,
we employ a greedy strategy, based on the error estimator
defined in Section~\ref{sec:eigenspace-error-estimate}.
It is necessary to  ``localize'' the error estimator \(\EE: V_h \to Y\) to each element \(K\) for this purpose. In all our examples, there is a natural
space $Y(K)$ on each $K$ such that 
\[
  Y = \prod_{K \in \oh} Y(K),
\]
allowing $\EE$ to be split into local contributions.  For instance,
\eqref{eq:element-by-element-Y-dpg} in the DPG case shows that the
norm of any $y = (y_1, \dots, y_N) \in Y$ with $y_k \in H^1(\oh)$ can
be written using
\[
  \| y \|_{Y(K)}^2 := \sum_{k=1}^N \| y_k \|_{H^1(K)}^2
\]
as $\| y\|_Y^2 = \sum_{K \in \oh} \| y\|_{Y(K)}^2$. Similarly, in the FOSLS 
case of~\eqref{eq:FOSLS-EE}, $y = \EE v_h$ has $N$ components $y_k = \EE_k v_h$, $k=1,\dots, N$, and we set $Y(K) = L_2(K)^N$ normed by 
\[
  \| y \|_{Y(K)}^2 := \sum_{k=1}^N \| y_k \|_{L_2(K)}^2.
\]
Using the local $Y(K)$ in each case, we define element-by-element
error indicators $\eta_K$ and accompanying quantities, namely let 
\begin{equation*}
  \eta_K = \norm*{\EE v_h}_{Y(K)}, \quad
  \eta_{\max} = \max_{K \in \Omega_h} \eta_K, \quad \tAnd
  \eta_{\ell^2} = \left( \sum_{K \in \Omega_h} \eta_K^2 \right)^{1/2},
\end{equation*}
be the local error indicator, the maximum error indicator, and the
\(\ell^2\)-norm of the error indicators, respectively.
Next, we say \(K\) is marked for refinement if, given a fixed ratio
parameter \(0 < \theta < 1\), \(\eta_K \geq \theta \ \eta_{\max}\).
Finally, we refine all marked elements using a standard conforming long-edge
bisection algorithm.
We set \(\theta = 0.9\).
Various other  marking strategies can be
employed, such as the Dörfler marking strategy~\cite{Dor1995-1}.
However, we note that the greedy strategy is simple to implement and
performs well in practice.


\subsection{Laplace eigenproblem on a Gordon-Webb-Wolpert drum}

Our first numerical example is the aforementioned Laplace eigenproblem on
one of the Gordon-Webb-Wolpert (GWW) isospectral
drums~\cite{GorWebWol1992-1, Dri1997-1}.  This domain is one of two
non-congruent planar domains that share the same Dirichlet Laplace
spectrum (providing a negative answer to the famous question ``Can one
hear the shape of a drum?'' posed by Kac~\cite{Kac1966-1}).
We begin by computing a cluster of eigenvalues surrounding the
ninth eigenvalue.
We define a circular contour \(\Gamma_\mathsf{cl}\) of
center \(c_\mathsf{cl} = 12.33\) and radius \(r_\mathsf{cl} = 1.00\)
which contains the eighth, ninth and tenth eigenvalues of the Laplace operator
associated with both GWW domains.
As reported in~\cite{Dri1997-1},
the eighth, ninth and tenth eigenvalues of the Laplace operator on both GWW
domains are approximately given by
\begin{align*}
  \lambda_8 & \approx 11.5413953956, &
  \lambda_9 & \approx 12.3370055014, & \quad \text{and} \quad
  \lambda_{10} & \approx 13.0536540557.
\end{align*}
We denote the cluster of eigenvalues contained in \(\Gamma_\mathsf{cl}\)
by \(\Lambda_\mathsf{cl} = \{\lambda_8, \lambda_9, \lambda_{10}\}\).
Similarly, we define a circular contour \(\Gamma_\mathsf{sg}\) of
center \(c_\mathsf{sg} = 12.33\) and radius \(r_\mathsf{sg} = 0.4\)
which contains only the ninth eigenvalue, and denote the corresponding
cluster by \(\Lambda_\mathsf{sg} = \{\lambda_9\}\).
Then we use the FEAST method that discretizes the contour integral by
a four-point trapezoidal quadrature rule producing a rational function
as in Example~\ref{ex:Butterworth} with $N=4$ (including the shift
$\phi$ mentioned there to avoid poles on the real axis).
Using the corresponding DPG error estimator $\EE$, we run the adaptive
algorithm.
For the purpose of benchmarking, we also ran the adaptive algorithm using
an explicit residual error estimator~\cite[Section 3.2]{Lar2000-1}
and a DWR error estimator~\cite[Equation (85)]{HeuRan2001-1} (with a
first-order weight given by the gradient of the computed eigenfunction).
Note that these estimators were not designed for a cluster of eigenvalues,
but we have adapted them to this setting by considering the
element-wise sum of the error indicators for each eigenpair
in the cluster.


The result, displayed in Figure~\ref{fig:meshes}, shows
that the method is capable of automatically finding the right
locations needing refinement.
The initial mesh in Figure~\ref{fig:mesh1} is a quasi-uniform mesh
with appoximate mesh size \(h = 0.3\).
The final mesh in Figure~\ref{fig:mesh_ref_cluster}
shows strong refinementnear the re-entrant corners of
the domain, where the eigenfunctions are known to be singular or to have
larger gradients (see Figure~\ref{fig:ews}).
In contrast, the final mesh in Figure~\ref{fig:mesh_ref_single}
shows uniform refinement in the whole domain, due to the regularity 
of the ninth eigenfunction (see Figure~\ref{fig:ew9}).


\begin{figure}
    \centering
    \begin{subfigure}[t]{0.32\textwidth}
        \centering
        \includegraphics[width=\textwidth]{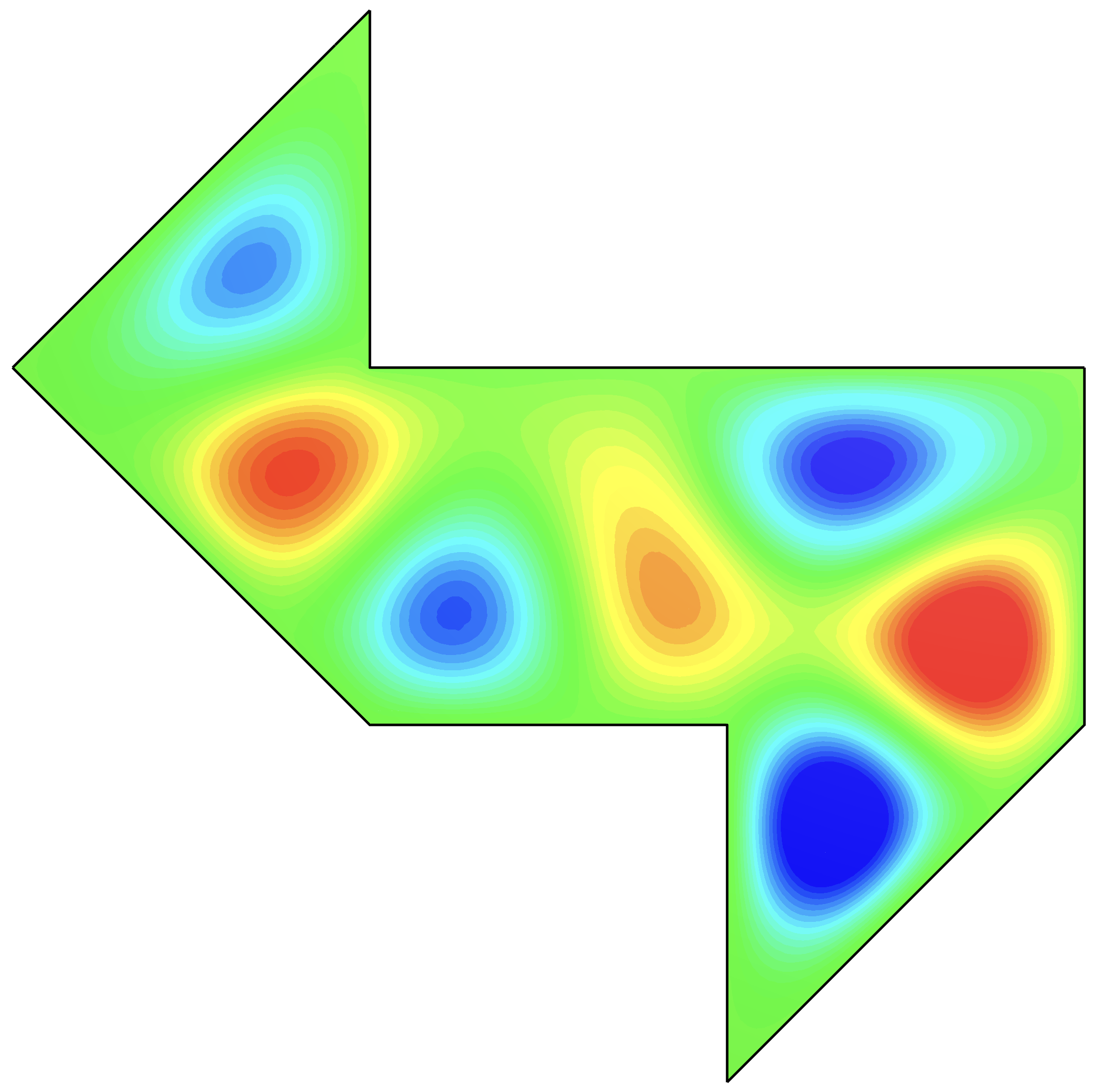}%
        \caption{Eighth eigenfunction.}
        \label{fig:ew8}
    \end{subfigure}%
    \hfill
    \begin{subfigure}[t]{0.32\textwidth}
        \centering
        \includegraphics[width=\textwidth]{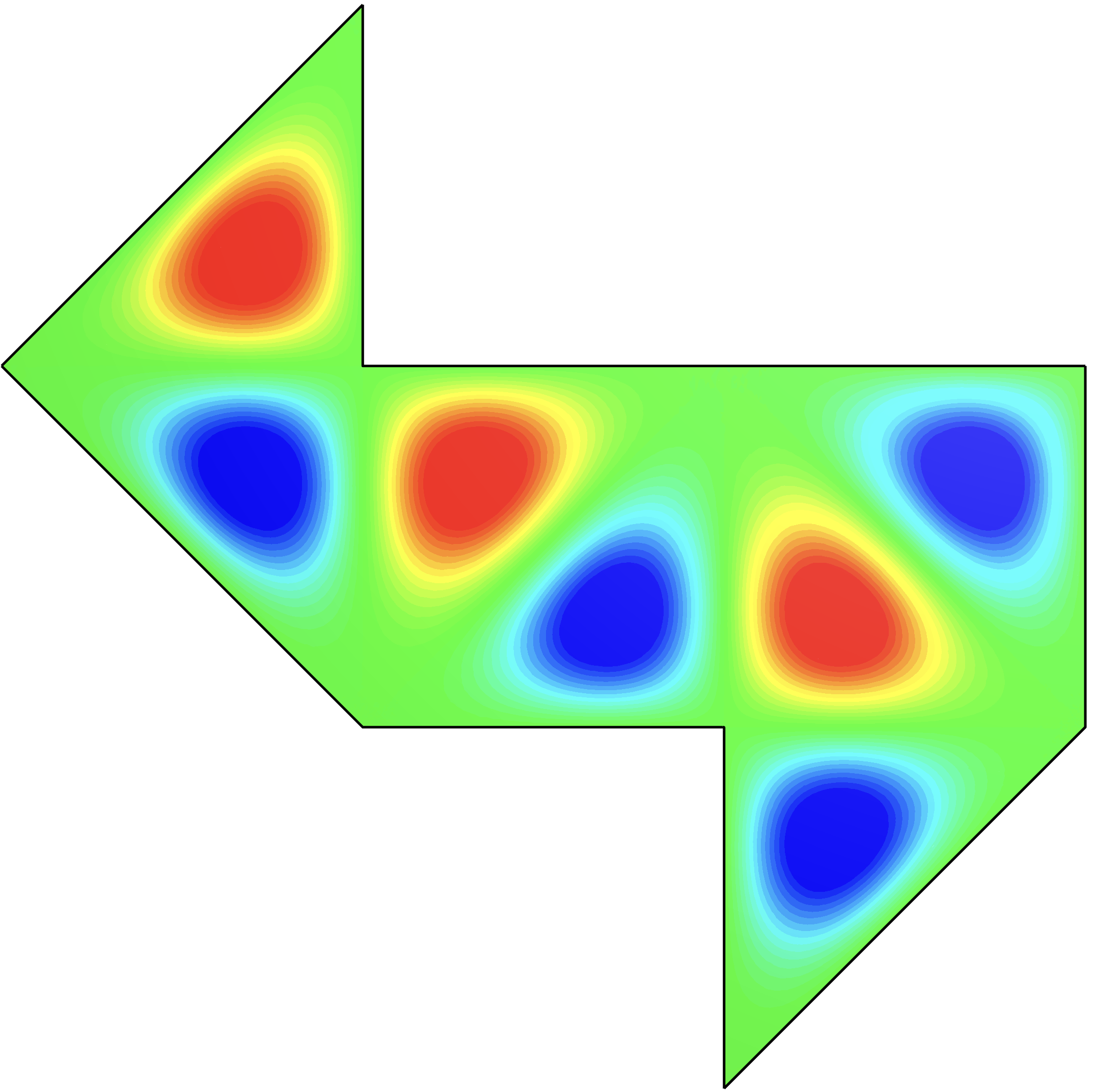}%
        \caption{Ninth eigenfunction.}
        \label{fig:ew9}
    \end{subfigure}%
    \hfill
    \begin{subfigure}[t]{0.32\textwidth}
        \centering
        \includegraphics[width=\textwidth]{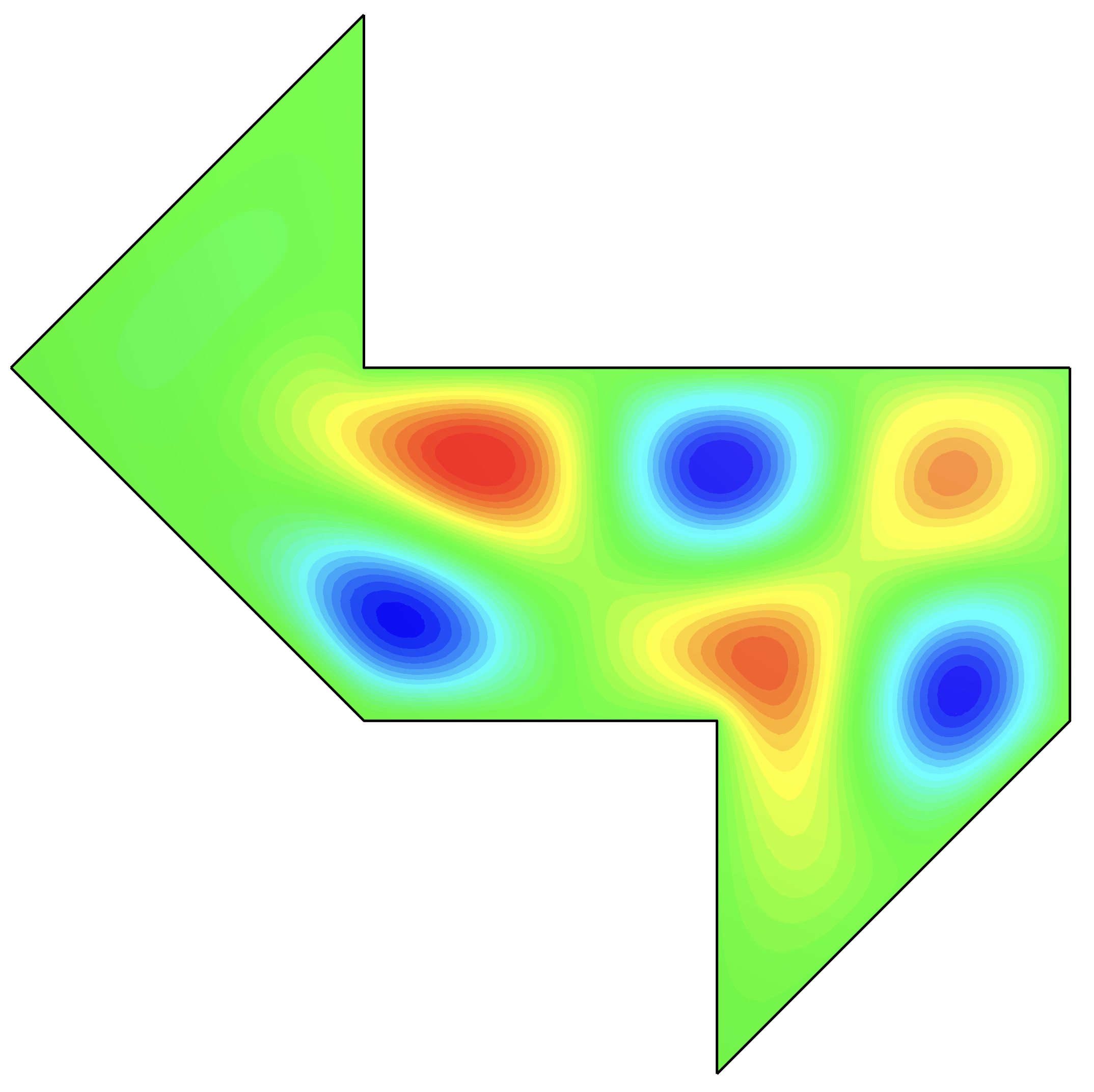}%
        \caption{Tenth eigenfunction.}
        \label{fig:ew10}
    \end{subfigure}%
    \caption{
        Eigenfunctions corresponding to the eighth, ninth and tenth eigenvalues
        ($\lambda_8$, $\lambda_9$ and $\lambda_{10}$) of the Laplace operator 
        a GWW isospectral drum.
        Scale varies between figures.
    }
    \label{fig:ews}
\end{figure}


\begin{figure}
    \centering
    \begin{subfigure}[t]{0.32\textwidth}
        \centering
        \includegraphics[width=\textwidth]{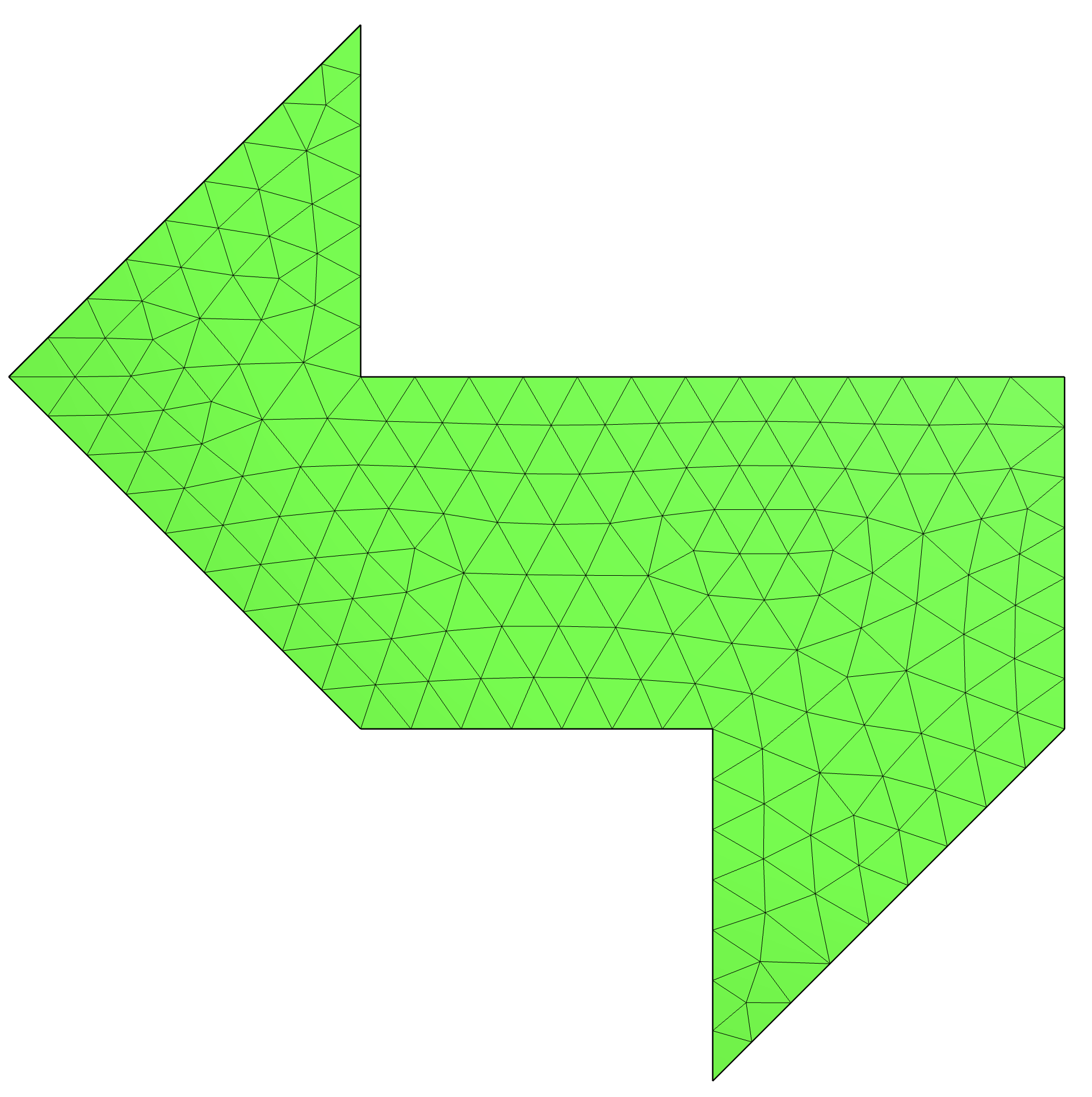}%
        \caption{Initial mesh.}
        \label{fig:mesh1}
    \end{subfigure}%
    \hfill
    \begin{subfigure}[t]{0.32\textwidth}
        \centering
        \includegraphics[width=\textwidth]{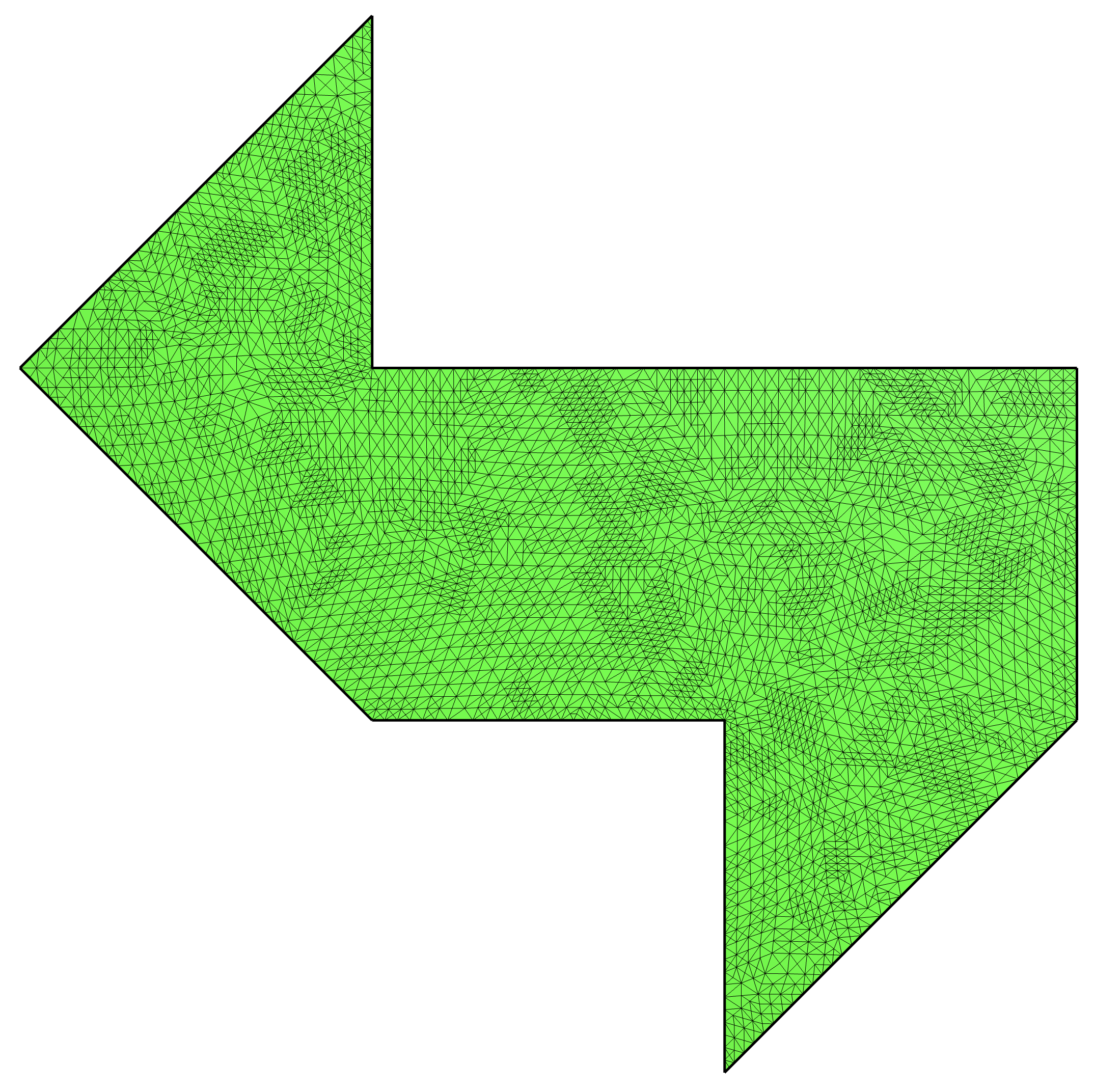}%
        \caption{Final mesh associated to \(\Gamma_\mathsf{sg}\).}
        \label{fig:mesh_ref_single}
    \end{subfigure}%
    \hfill
    \begin{subfigure}[t]{0.32\textwidth}
        \centering
        \includegraphics[width=\textwidth]{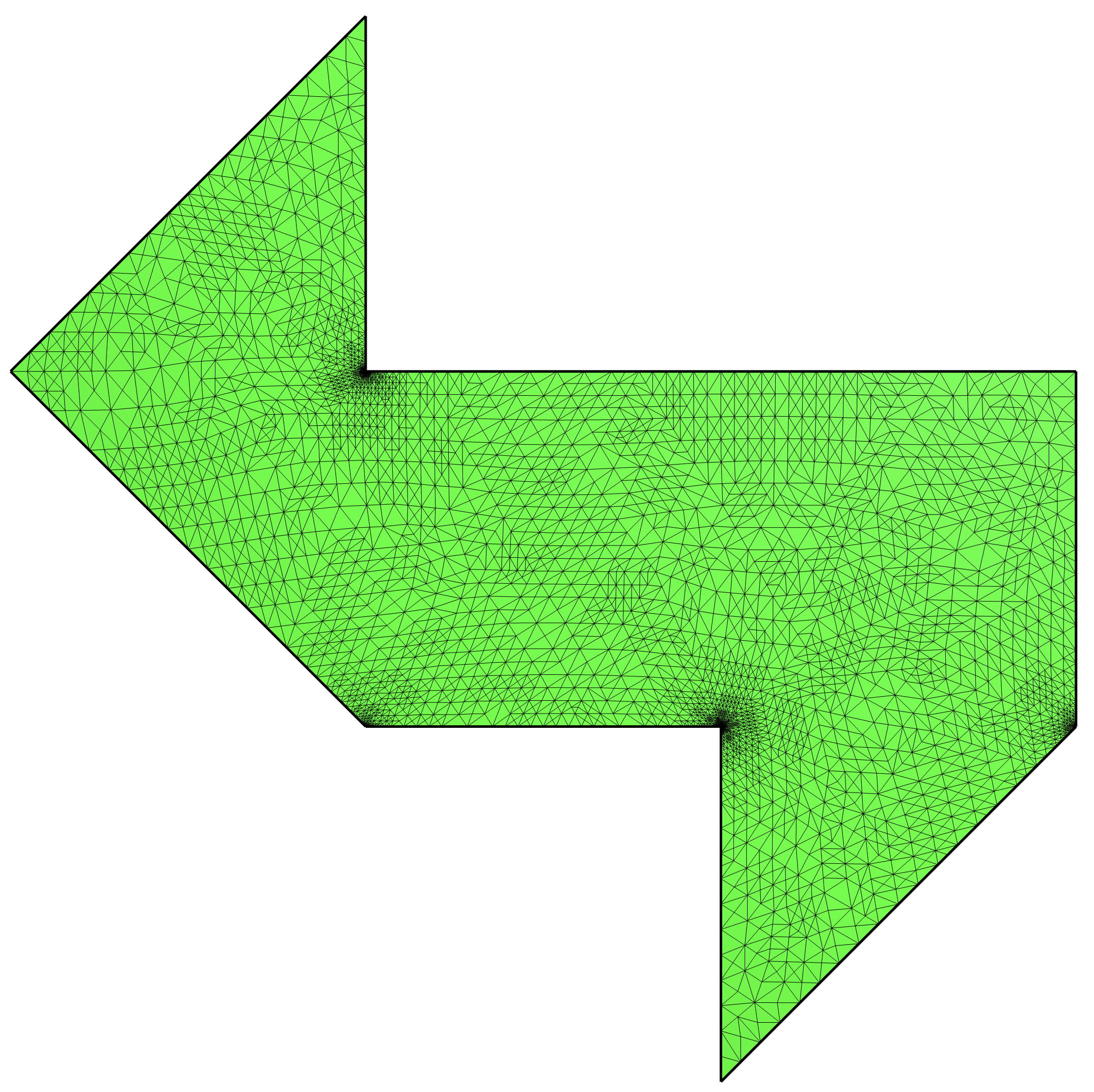}%
        \caption{Final mesh associated to \(\Gamma_\mathsf{cl}\).}
        \label{fig:mesh_ref_cluster}
    \end{subfigure}%
    \caption{
      On the left, initial mesh on a GWW isospectral drum.
      On the center and right, final meshes after the adaptive algorithm,
      using the DPG error estimator for computing 
      the single eigenvalue \(\lambda_9\) (center) and the cluster
      \(\Lambda_\mathsf{cl}\) (right).
    }
    \label{fig:meshes}
\end{figure}

\begin{figure}
  \centering
  \begin{tikzpicture}[scale=0.85]
    \centering
    \begin{loglogaxis}[
        cycle list name={three-by-two},
        xlabel={Number of \textsc{DOFs}},
        ylabel={Approximation error},
        legend columns=3,
        legend entries={%
            \(d_{\sf{cg-res}}\),
            \(d_{\sf{cg-dwr}}\),
            \(d_{\sf{dpg}}\),
            \(\eta_{\ell^2,\sf{cg-res}}\),
            \(\eta_{\ell^2,\sf{cg-dwr}}\),
            \(\eta_{\ell^2,\sf{dpg}}\),
        },
        table/x=ndofs,
    ]
      \addplot+[table/y=hausdorff, y filter/.expression={abs(y) < 0.5e-10 ? NaN : \pgfmathresult }] table {nine_single/cg_adapt_data.csv};
      \addplot+[table/y=hausdorff, y filter/.expression={abs(y) < 0.5e-10 ? NaN : \pgfmathresult }] table {nine_single/dwr_adapt_data.csv};
      \addplot+[table/y=hausdorff, y filter/.expression={abs(y) < 0.5e-10 ? NaN : \pgfmathresult }] table {nine_single/dpg_adapt_data.csv};
      \addplot+[table/y=l2_eta] table {nine_single/cg_adapt_data.csv};
      \addplot+[table/y=l2_eta] table {nine_single/dwr_adapt_data.csv};
      \addplot+[table/y=l2_eta] table {nine_single/dpg_adapt_data.csv};

      \draw[red, dashed, semithick] (axis cs:1e-2,1e-10) -- (axis cs:5e5,1e-10);

    \end{loglogaxis}
    \centering
  \end{tikzpicture}
  \caption{
    Hausdorff distance between the (approximated) ninth eigenvalue
    \(\Lambda_\mathsf{sg} = \{\lambda_9\}\) and its computed
    approximation, \(d_{\bullet}\),
    and \(\ell^2\)-norm of the error estimator
    \(\eta_{\ell^2,\bullet}\), for the different discretizations and error
    estimators
    (\(\bullet \in \{\mathsf{cg-res}, \mathsf{cg-dwr}, \mathsf{dpg}\}\)),
    against the number of \textsc{dofs}.
    The dashed red line indicates the level of accuracy of the reference eigenvalues.
  }
  \label{fig:poisson_gww1_single}
\end{figure}
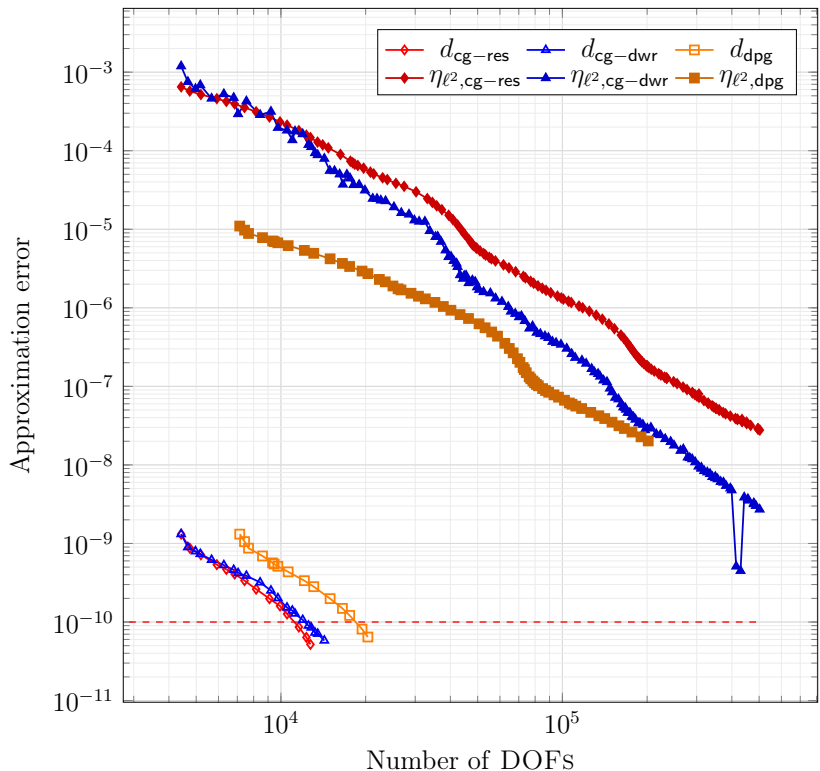

\begin{figure}
  \centering
  \begin{tikzpicture}[scale=0.85]
    \centering
    \begin{loglogaxis}[
        cycle list name={three-by-two},
        xlabel={Number of \textsc{DOFs}},
        ylabel={Efficiency},
        legend columns=3,
        legend entries={%
            \(\sf{cg-res}\),
            \(\sf{cg-dwr}\),
            \(\sf{dpg}\),
          },
        legend style={at={(0.5,1.2)}, anchor=north, font=\small},
        table/x=ndofs,
        xmax=14000,
        width=12cm,
        height=6cm,
    ]
      \addplot+[table/y expr={\thisrow{hausdorff}/\thisrow{l2_eta}}] table {nine_single/cg_adapt_data.csv};
      \addplot+[table/y expr={\thisrow{hausdorff}/\thisrow{l2_eta}}, restrict x to domain=0:18000] table {nine_single/dwr_adapt_data.csv};
      \addplot+[table/y expr={\thisrow{hausdorff}/\thisrow{l2_eta}}, restrict x to domain=0:25000] table {nine_single/dpg_adapt_data.csv};

    \end{loglogaxis}
    \centering
  \end{tikzpicture}
  \caption{
    Efficiency ratio for the single eigenvalue approximation \(\Lambda_\mathsf{sg} = \{\lambda_9\}\),
    with different discretizations and error estimators
    (\(\bullet \in \{\mathsf{cg-res}, \mathsf{cg-dwr}, \mathsf{dpg}\}\)),
    against the number of \textsc{dofs}.
    (\textsc{dofs} beyond 12,000 have been omitted for clarity.)
  }
  \label{fig:poisson_gww1_single_eff}
\end{figure}

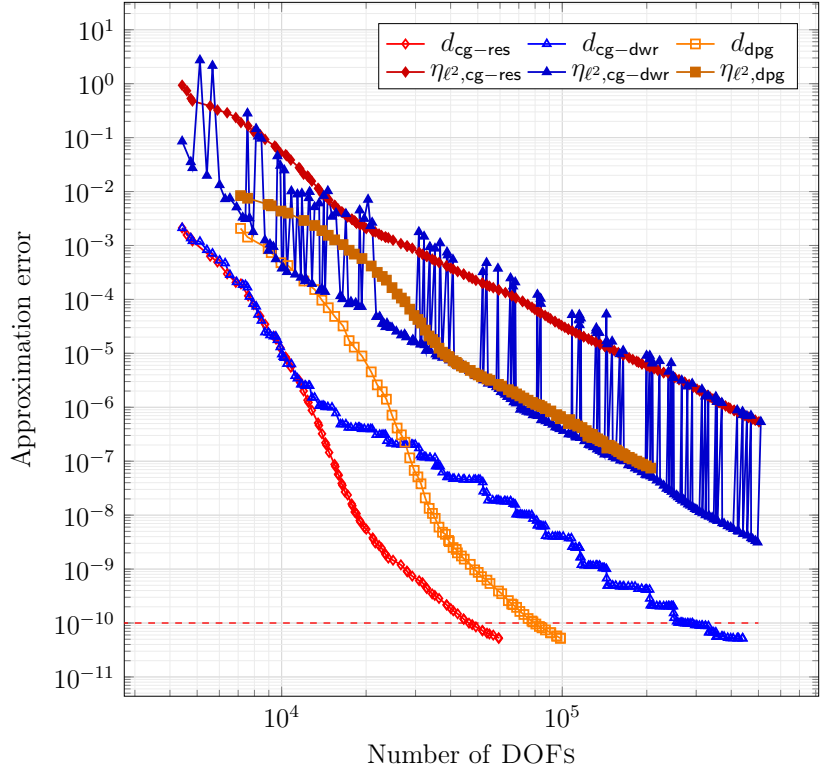
\begin{figure}
  \centering
  \begin{tikzpicture}[scale=0.85]
    \centering
    \begin{loglogaxis}[
        cycle list name={three-by-two},
        xlabel={Number of \textsc{DOFs}},
        ylabel={Approximation error},
        legend columns=3,
        legend entries={%
            \(d_{\sf{cg-res}}\),
            \(d_{\sf{cg-dwr}}\),
            \(d_{\sf{dpg}}\),
            \(\eta_{\ell^2,\sf{cg-res}}\),
            \(\eta_{\ell^2,\sf{cg-dwr}}\),
            \(\eta_{\ell^2,\sf{dpg}}\),
        },
        table/x=ndofs,
    ]
      \addplot+[table/y=hausdorff, y filter/.expression={abs(y) < 0.5e-10 ? NaN : \pgfmathresult }] table {nine_cluster/cg_adapt_data.csv};
      \addplot+[table/y=hausdorff, y filter/.expression={abs(y) < 0.5e-10 ? NaN : \pgfmathresult }] table {nine_cluster/dwr_adapt_data.csv};
      \addplot+[table/y=hausdorff, y filter/.expression={abs(y) < 0.5e-10 ? NaN : \pgfmathresult }] table {nine_cluster/dpg_adapt_data.csv};
      \addplot+[table/y=l2_eta] table    {nine_cluster/cg_adapt_data.csv};
      \addplot+[table/y=l2_eta] table    {nine_cluster/dwr_adapt_data.csv};
      \addplot+[table/y=l2_eta] table    {nine_cluster/dpg_adapt_data.csv};

      \draw[red, dashed, semithick] (axis cs:1e-2,1e-10) -- (axis cs:5e5,1e-10);

    \end{loglogaxis}
    \centering
  \end{tikzpicture}
  \caption{
    Hausdorff distance between the (approximated) cluster of known eigenvalues
    \(\Lambda_\mathsf{cl} = \{\lambda_8, \lambda_9, \lambda_{10}\}\)
    and the computed eigenvalues, \(d_{\bullet}\),
    and \(\ell^2\)-norm of the error estimator
    \(\eta_{\ell^2,\bullet}\), for the different discretizations and error
    estimators
    (\(\bullet \in \{\mathsf{cg-res}, \mathsf{cg-dwr}, \mathsf{dpg}\}\)),
    against the number of \textsc{dofs}.
    The dashed red line indicates the level of accuracy of the reference eigenvalues.
  }
  \label{fig:poisson_gww1_cluster}
\end{figure}

\begin{figure}
  \centering
  \begin{tikzpicture}[scale=0.85]
    \centering
    \begin{loglogaxis}[
        cycle list name={three-by-two},
        xlabel={Number of \textsc{DOFs}},
        ylabel={Efficiency},
        legend columns=3,
        legend entries={%
            \(\sf{cg-res}\),
            \(\sf{cg-dwr}\),
            \(\sf{dpg}\),
        },
        table/x=ndofs,
        xmax=40000,
        legend style={at={(0.5,1.2)}, anchor=north, font=\small},
        width=12cm,
        height=6cm,
    ]
      \addplot+[table/y expr={\thisrow{hausdorff}/\thisrow{l2_eta}}] table {nine_cluster/cg_adapt_data.csv};
      \addplot+[table/y expr={\thisrow{hausdorff}/\thisrow{l2_eta}}, restrict x to domain=0:18000] table {nine_cluster/dwr_adapt_data.csv};
      \addplot+[table/y expr={\thisrow{hausdorff}/\thisrow{l2_eta}}, restrict x to domain=0:25000] table {nine_cluster/dpg_adapt_data.csv};

    \end{loglogaxis}
    \centering
  \end{tikzpicture}
  \caption{
    Efficiency ratio for the cluster eigenvalue approximation \(\Lambda_\mathsf{cl} = \{\lambda_8, \lambda_9, \lambda_{10}\}\),
    with different discretizations and error estimators
    (\(\bullet \in \{\mathsf{cg-res}, \mathsf{cg-dwr}, \mathsf{dpg}\}\)),
    against the number of \textsc{dofs}.
    (\textsc{dofs} beyond 40,000 have been omitted for clarity.)
  }
  \label{fig:poisson_gww1_cluster_eff}
\end{figure}

In Figure~\ref{fig:poisson_gww1_cluster}, we show the convergence of the
Hausdorff distance between the (approximated) cluster of known
eigenvalues
\(\Lambda_\mathsf{cl} = \{\lambda_8, \lambda_9, \lambda_{10}\}\) and
their computed approximations \(\Lambda_{h,\mathsf{cl}}\).
Similarly, in Figure~\ref{fig:poisson_gww1_single}, we show the convergence of
the Hausdorff distance between the (approximated) known eigenvalue
\(\Lambda_\mathsf{sg} = \{\lambda_9\}\) and its computed
approximation \(\Lambda_{h,\mathsf{sg}}\).
Recall that the Hausdorff distance between two sets
\(\Upsilon_1, \Upsilon_2 \subset \mathbb{C}\) is defined by
$ d(\Upsilon_1, \Upsilon_2) = \max \{ \sup_{\mu_1 \in \Upsilon_1}
\inf_{\mu_2 \in \Upsilon_2} |\mu_1 - \mu_2|,$
$ \sup_{\mu_2 \in \Upsilon_2} \inf_{\mu_1 \in \Upsilon_1} |\mu_1 -
\mu_2| \}.  $ This distance is indicated by $d$ in
Figures~\ref{fig:poisson_gww1_single}, and~\ref{fig:poisson_gww1_cluster} where the 
legends are subscripted by the discretization method, 
e.g., ``$d_\mathsf{dpg}$'' denotes the method
of Subsection~\ref{ssec:DPG}, ``$d_{\mathsf{cg-dwr}}$'' denotes the
above-mentioned DWR method with the standard Lagrange space
discretization, and
``$d_{\mathsf{cg-res}}$'' denotes the Laplacian-specific explicit residual 
defined by Larson in~\cite{Lar2000-1}.
The plots in the figure show how the  error in eigenvalue cluster approximation
and the 
\(\ell^2\)-norm of the error estimator decays as a function of the
number of degrees of freedom (\textsc{DOFs}) as the mesh gets
adaptively refined.
The eigenvalue cluster error is measured by the Hausdorff distance
between the set of approximate eigenvalues and the previously
mentioned reference eigenvalues from~\cite{Dri1997-1}.  Since the
reference values are expected to be accurate to ten digits after the
decimal point, a level of accuracy indicated in the figures by a
dashed red line, eigenvalue errors lying below this line are not
shown.

The explicit residual error estimator
from~\cite{Lar2000-1} requires less degrees of freedom due to the nature of the
underlying conforming discretization, while the DPG method uses a larger
number of degrees of freedom due to the mixed formulation employed in our
simulations.
This leads to error curves
(in Figures~\ref{fig:poisson_gww1_single} and~\ref{fig:poisson_gww1_cluster})
that appear shifted to the right for the DPG method.
Nonetheless, the DPG error estimator is tighter than the explicit residual
error estimator and the (first-order) DWR error estimator, for both the cluster
and the single eigenvalue cases, as can be observed
in Figures~\ref{fig:poisson_gww1_single_eff} and~\ref{fig:poisson_gww1_cluster_eff}. There,
we plot an easily computable analogue of efficiency
(that uses the eigenvalue cluster errors, which are more easily computed than
the gap between eigenspaces),
denoted by \(\eta_{\ell^2, \bullet}\), defined 
as the ratio between the Hausdorff distance \(d_{\bullet}\) and the
\(\ell^2\)-norm of the error estimator \(\eta_{\ell^2, \bullet}\),
i.e.,
\begin{equation*}
  \text{Eff}_{\ell^2, \bullet}
  = \frac{d_{\bullet}}{\eta_{\ell^2, \bullet}},
\end{equation*}
for \(\bullet \in \{\mathsf{cg-res}, \mathsf{cg-dwr}, \mathsf{dpg}\}\).
Since the estimators $\eta_{\ell^2, \bullet}$ track the eigenspace error,
we have little reason to expect $\text{Eff}_{\ell^2, \bullet}$
to be close to the perfect value of one, yet it is interesting to see
how its values from our experiments.
Note from
Figures~\ref{fig:poisson_gww1_single_eff} and~\ref{fig:poisson_gww1_cluster_eff}
that the DPG efficiency tends to bound the remaining two efficiency
curves from above, indicating a tighter estimator, in the case of the
single eigenvalue (Figure~\ref{fig:poisson_gww1_single_eff}).
We have truncated the plot to a
maximum of 12,000 \textsc{DOFs} to avoid the region where the methods
may achieve greater accuracy than the reference eigenvalue.
In the case of the cluster, the DPG case initially provides a tighter estimator,
but as the number of \textsc{DOFs} increases, the DWR estimator becomes
slightly tighter (Figure~\ref{fig:poisson_gww1_cluster_eff}), yet it oscillates
more than the DPG estimator.
This phenomenon has been observed in our previous
work~\cite{GopGroPinVan2025-1} for a similar (Maxwell) DWR estimator for
a cluster of eigenvalues. 
Similar simulations for the Laplace eigenproblem on the GWW domain
were performed considering the DWR error estimator with second-order derivative
weights (not reported here), yielding efficiency curves
with similar oscillatory behavior.
A more cluster-robust DWR estimator might need to be developed
to overcome this issue (instead of our simple generalization of singleton 
DWR estimator).
In these plots, as in the single eigenvalue case, we have truncated the plot to a
maximum of 40,000 \textsc{DOFs} to avoid the region where the methods may 
achieve greater accuracy than the reference eigenvalues.

\subsection{Helmholtz eigenproblem on a Bragg fiber}

For our second numerical example, we consider the Helmholtz
eigenproblem for a leaky mode. Leaky modes are slightly lossy, yet are
very important practically to guide light through modern
microstructured fibers.  (For some examples of such fibers, see~\cite{GopGroPinVan2025-1}.)  These modes satisfy an outgoing
boundary condition at infinity permitting the leakage of energy which
makes the corresponding eigenproblem nonselfadjoint. Here we consider
a simple model fiber, the Bragg fiber, which is often used to get
intuition for more complex fibers. We handle the outgoing condition by
inserting a perfectly matched layer (PML)~\cite{Ber1994-1,
  ColMon1998-1, BraPas2007-1} which modifies the solution within that layer to a
decaying one (while leaving the solution unchanged in the remainder) allowing us to truncate the infinite-domain problem to a bounded
computational domain~$\Omega$. The purpose of this subsection is to present a
computational study of the performance of the proposed DPG error
estimator for this nonselfadjoint eigenproblem.  (A full verification
of Assumption~\ref{asm:resolvent-approx} for this case requires
further advances in DPG theory which will take us too far away from
the present focus.)

The equation for the Helmholtz eigenproblem on the transverse fiber
cross section, after subtracting off the homogenous refractive index
$n_0$ of the infinite air surround, and after a nondimensionalization
by a characteristic length scale~$L$, becomes (see
e.g.,~\cite[Eq.~(21a--b)]{GopParVan2022-1})
\begin{equation}
  \label{eq:before-pml}
\begin{aligned}
  - \Delta u + V u & = Z^2 u, \qquad \text{ in } \mathbb{R}^2, \\
  u \text{ is outgoing } & \text{ as } r = \sqrt{x_0^2 + x_1^2} \to \infty,
\end{aligned}  
\end{equation}
where \(V\) is the \emph{index well} defined by
\(V(x) = L^2 k^2 (n_0^2 - n^2(x))\), \(k\in \mathbb{R}\) is the
operational wavenumber, \(n = n(x)\) is the refractive index profile
of the
fiber, 
and \(Z^2\) is the nondimensionalized eigenvalue to be found.  In
terms of the physical ``propagation constant'' $\beta$, 
the original Helmholtz eigenvalue is  $\beta^2$, and  the nondimensional eigenvalue
is $Z^2 = L^2 (k^2 n_0^2 - \beta^2)$.

To truncate the domain using a PML, we perform a complex change of
variables \(\tilde{x} = \Phi(x)\) where
\(\Phi(x) := \frac{\eta(r)}{r} x\) with complex-valued function
\(\eta: \mathbb{R}_{\geq 0} \to \mathbb{C}\), defined shortly, of the
radial coordinate $r$.  The map $\Phi$ is the identity on the disk
$r < r_0$, while for $r>r_0$ it is designed to transform the solution
to an exponentially decaying function of~$r$.
It is standard (see e.g.,~\cite{ColMon1998-1}) to transform the
problem~\eqref{eq:before-pml} to \(\mathcal{A} u = Z^2 u\), where the
operator $\mathcal{A}$ now depends on $\Phi$, and then truncate the domain to
a finite domain $\om$ of some radius sufficiently greater than $r_0$
where  the transformed mode~$u$ has decayed enough to be indistinguishable from zero.
Using this
$\mathcal A$, the following modified DPG discretization of the
resolvent problem \((Z^2 - \mathcal{A}) u = f\) on $\Omega$ is obtained:
find \(u_h \in V_h\), \(\hat{q}_n \in \hat{X}_h\) such that
\begin{align}
  (\varepsilon_h, \delta_h)_{H^1(\Omega_h)} +
  a_{Z^2,\mathsf{PML}}((u_h, \hat{q}_n), \delta_h) &
  = (\det(J) f, \delta_h)_{L_2(\Omega)}, \\
  \overline{a_{Z^2,\mathsf{PML}}((w_h, \hat{r}_n), u_h)} &
  = 0,
\end{align}
for all \(\delta_h \in Y_h\) and \((w_h, \hat{r}_n) \in V_h \times \hat{X}_h\),
where
\begin{equation}\label{eq:a-z2-pml}
  \begin{aligned}
    a_{Z^2,\mathsf{PML}}((u, \hat{q}_n), v) 
    =
    & Z^2(\det(J) u, v)_{L_2}
      - (\gamma \nabla u, \nabla v)_{L_2}
    \\
    & - (\det(J) V u, v)_{L_2}
      + \langle \det(J) J^{-\top} \hat{q}_n, v \rangle_h    
  \end{aligned}
\end{equation}
for all \(v \in Y_h\),
where \(J\) denotes the Jacobian matrix of the transformation \(\Phi\), and
\(\gamma = J^{-1} J^{-\top} \det(J)\).

There are multiple ways to choose the mapped radius $\eta(r)$, as previously
observed in the literature~\cite{ColMon1998-1, GopParVan2022-1, KimPas2009-1, NanWes2018-1}.
We use a two-dimensional analogue of an expression in~\cite{KimPas2009-1}
(see also~\cite{GopGroPinVan2025-1}).
For simplicity, fix $\Omega$ to be a disk of radius $r_1$.
Furthermore, assume that support of the index well
$V$ is contained in a disk of radius
$r_0 < r_1$, and that a cylindrical PML is set in the annular region
\(r_0 < r < r_1\). 
Let \(0 < \alpha\) be the PML strength parameter. 
We set 
\begin{subequations}\label{eq:eta-defn}
\begin{equation}
  \eta(r)  = r(1 + \ii \phi(r)),
\end{equation}
where 
\begin{equation}
  \phi(r) =
  \begin{cases}
    0, & \text{ if } r < r_0, \\
    \alpha\, \displaystyle{
      \frac{\displaystyle{\int_{r_0}^r (s - r_0)^{2} (s - r_1)^{2} \,
      \mathrm{d}s}}{
        \displaystyle{
          \int_{r_0}^{r_1} (s - r_0)^{2} (s - r_1)^{2} \, \mathrm{d}s}
        },
      }
    & \text{ if } r_0 < r < r_1.
  \end{cases}
\end{equation}
\end{subequations}
This together with  $\zeta(r) = \eta(r) / r$ defines all quantities required
to compute the Jacobian \(J\) of the transformation \(\Phi\).

For the purpose of this example, we consider a simple Bragg fiber
(see~\cite[Section 4]{GopGroPinVan2025-1}, and references therein),
with coefficients defined by Table~\ref{tab:bragg-fiber-params}.  The
geometry of the fiber is illustrated in Figure~\ref{fig:geo_bragg}.
The initial mesh in Figure~\ref{fig:mesh_bragg_init} is a
quasi-uniform mesh with approximate element size \(h \approx 1.0\) for
the PML region, and \(h \approx 0.1\) for the inner fiber region.  We
approximate an exact eigenvalue of multiplicity two, i.e.,  
the exact cluster $\varLambda$ is a singleton containing
\(Z^2 = 2.0302671 + \ii\, 0.0024486\), a reference
value obtained by semianalytical computations.  (The corresponding two
eigenfunctions have patterns similar to the standard two-leaved
linearly polarized LP\textsubscript{11}-like guided modes within the core,
but they are leaky modes.)  The associated discrete cluster \(\varLambda_h\)
is computed using the FEAST method, with a circular contour \(\Gamma\)
centered at the exact eigenvalue and radius \(r_\Gamma = 0.1\), and usually has two elements, both close to the exact eigenvalue.

Figure~\ref{fig:mesh_bragg_final} displays the final mesh after
running the adaptive algorithm using the DPG error estimator for
approximating the second fundamental eigenvalue cluster.
Observe that the refinement is concentrated in the glass cladding of the
fiber.
This is in concordance with the oscillatory fine structures observed
in~\cite{VanGopGro2023-1}, and depicted in Figures~\ref{fig:mode1_rescaled_bragg}
and~\ref{fig:mode2_rescaled_bragg}, where the scalar components of the
eigenmodes are rescaled to better visualize the behavior in the glass ring.
Figures~\ref{fig:mode1_bragg} and~\ref{fig:mode2_bragg} display the same
eigenmodes without
rescaling, showing how the modes are mostly confined to the
air core of the fiber.
The performance of the DPG error estimator is illustrated in
Figure~\ref{fig:bragg_dpg_convergence}, where we plot the convergence of the
Hausdorff distance between the exact eigenvalue cluster \(\varLambda\) and
its computed approximation \(\varLambda_h\), as well as the
\(\ell^2\)-norm of the DPG error estimator, against the number of degrees of
freedom.

\begin{table}
  \centering
  \begin{tabular}{r r r}
    \toprule
    Name & Symbol & Value \\
    \midrule
    Nondimensionalization length scale & \(L\) & \(1.5 \times 10^{-5}\)~m \\
    Operating wavenumber & \(k\) & \(\frac{2 \pi}{1.7} \times 10^{6}~\text{m}^{-1}\) \\
    Refractive index in air regions & \(n_{\rm air}\) & \(1.00027717\) \\
    Refractive index in glass ring & \(n_{\rm glass}\) & \(1.43881648\) \\
    Air core radius & \(r_{\rm core}\) & \(2.7183\) \\
    Outer radius of glass ring & \(r_{\rm outer}\) & \(3.385\) \\
    Thickness of glass ring & \(t_{\rm ring}\) & \(0.66666667\) \\
    Outer radius of outer air region & \(r_{0}\) & \(4.385\) \\
    Thickness of outer air region & \(t_{\rm air}\) & \(1.0\) \\
    Outer radius of PML & \(r_{1}\) & \(8.05166666\) \\
    Thickness of PML & \(t_{\rm PML}\) & \(3.66666667\) \\
    PML strength & \(\alpha\) & \(10.0\) \\
    \bottomrule
  \end{tabular}
  \caption{Parameters defining the Bragg fiber used in the numerical
  experiments.}
  \label{tab:bragg-fiber-params}
\end{table}
\begin{figure}
  \centering
  \begin{subfigure}[t]{0.32\textwidth}
    \centering
    \begin{tikzpicture}[scale=0.03]
      \fill [blue, semitransparent, even odd rule] (0,0)
      circle[radius=70.8]
      circle[radius=38.0];
      \draw[red, <->] (0,0) -- (38.0,0)
      node [scale=0.75,black, midway, below]
        {$r_{\text{core}}$} ;
      \draw[red, <->] (-70.8,0) -- (-38.0,0)
      node [scale=0.75,black, midway, above]
      {$t_{\text{ring}}$} ;
      \draw[red, dashed] (0,0) circle [radius=82.8];
      \draw[red, <->] (0,0) -- (43.0,72.0)
      node [scale=0.75,black, midway, left]
      {$r_0$} ;
    \end{tikzpicture}
    \caption{Geometry of the Bragg fiber (not to scale).}
    \label{fig:geo_bragg}
  \end{subfigure}%
  \hfill
  \begin{subfigure}[t]{0.32\textwidth}
      \centering
      \includegraphics[width=\textwidth]{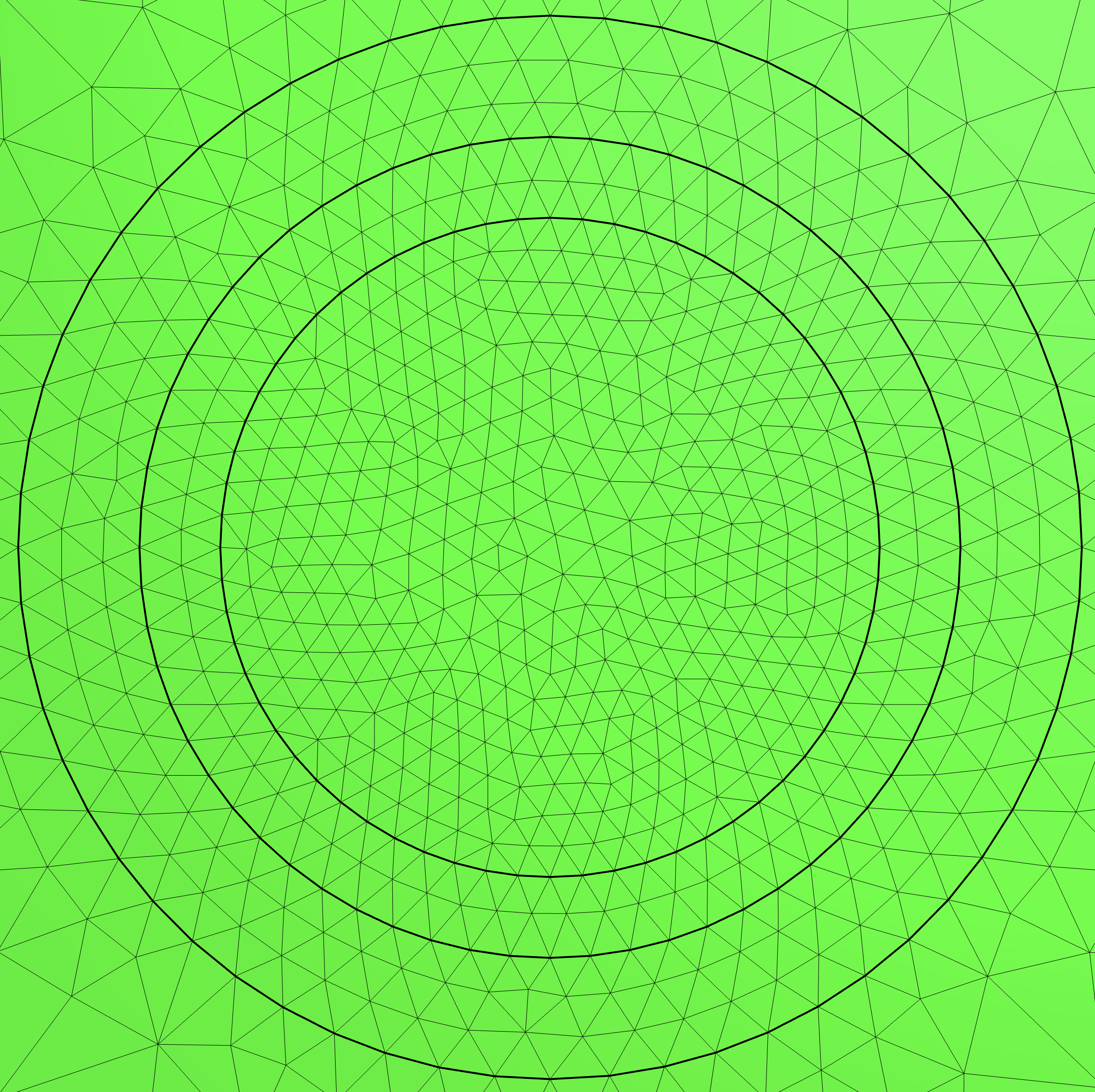}
      \caption{Initial mesh of the transverse fiber cross section.}
      \label{fig:mesh_bragg_init}
  \end{subfigure}%
  \hfill
  \begin{subfigure}[t]{0.32\textwidth}
      \centering
      \includegraphics[width=\textwidth]{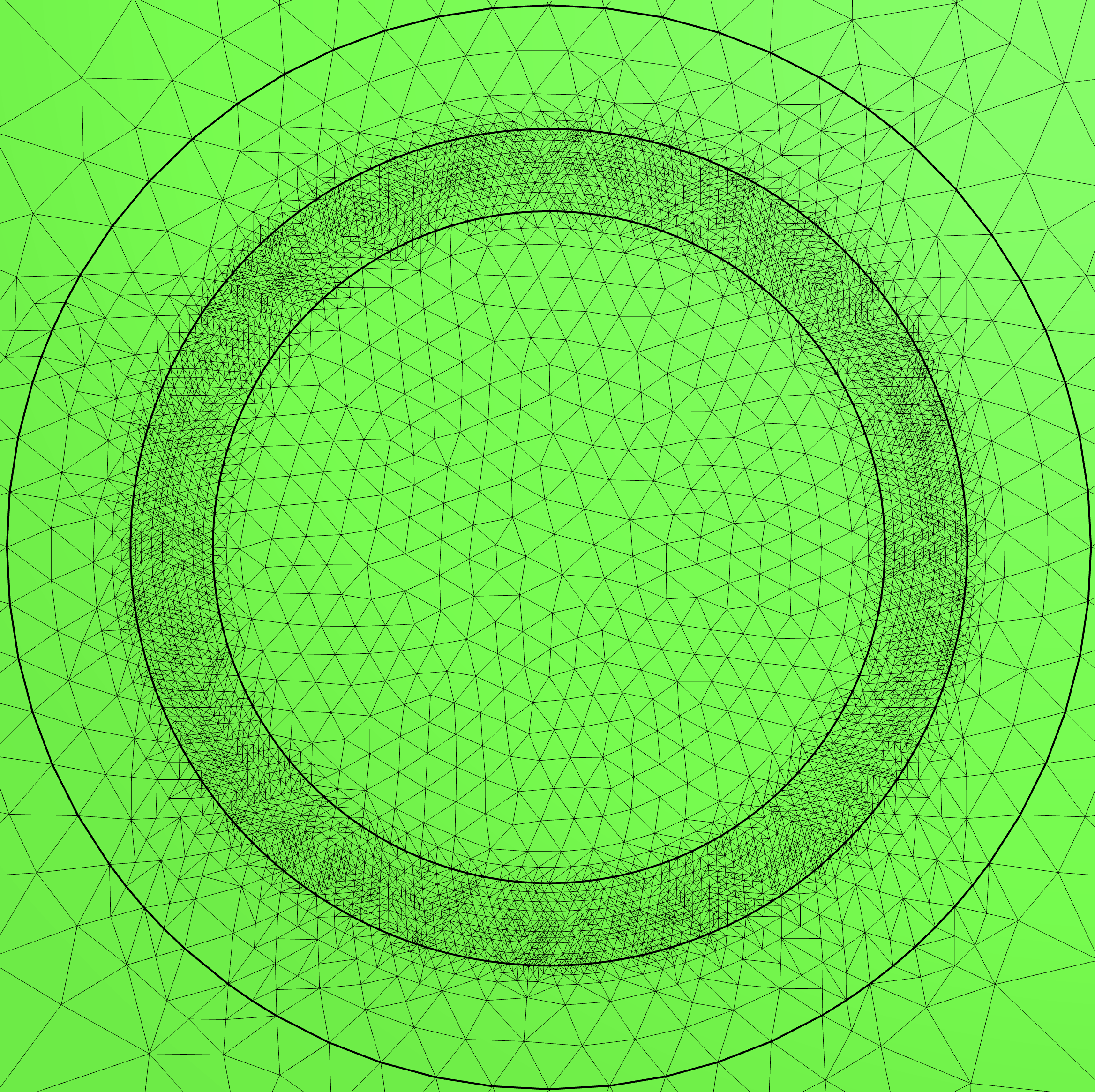}
      \caption{Final mesh after adaptive refinement.}
      \label{fig:mesh_bragg_final}
  \end{subfigure}%
    \caption{
      Geometry of the Bragg fiber (left),
      initial mesh (center), and final mesh after adaptive refinement (right)
      using the DPG error estimator for approximating the second fundamental
      eigenvalue cluster.
    }
\end{figure}

\begin{figure}
    \begin{subfigure}[t]{0.48\textwidth}
        \centering
        \includegraphics[width=\textwidth]{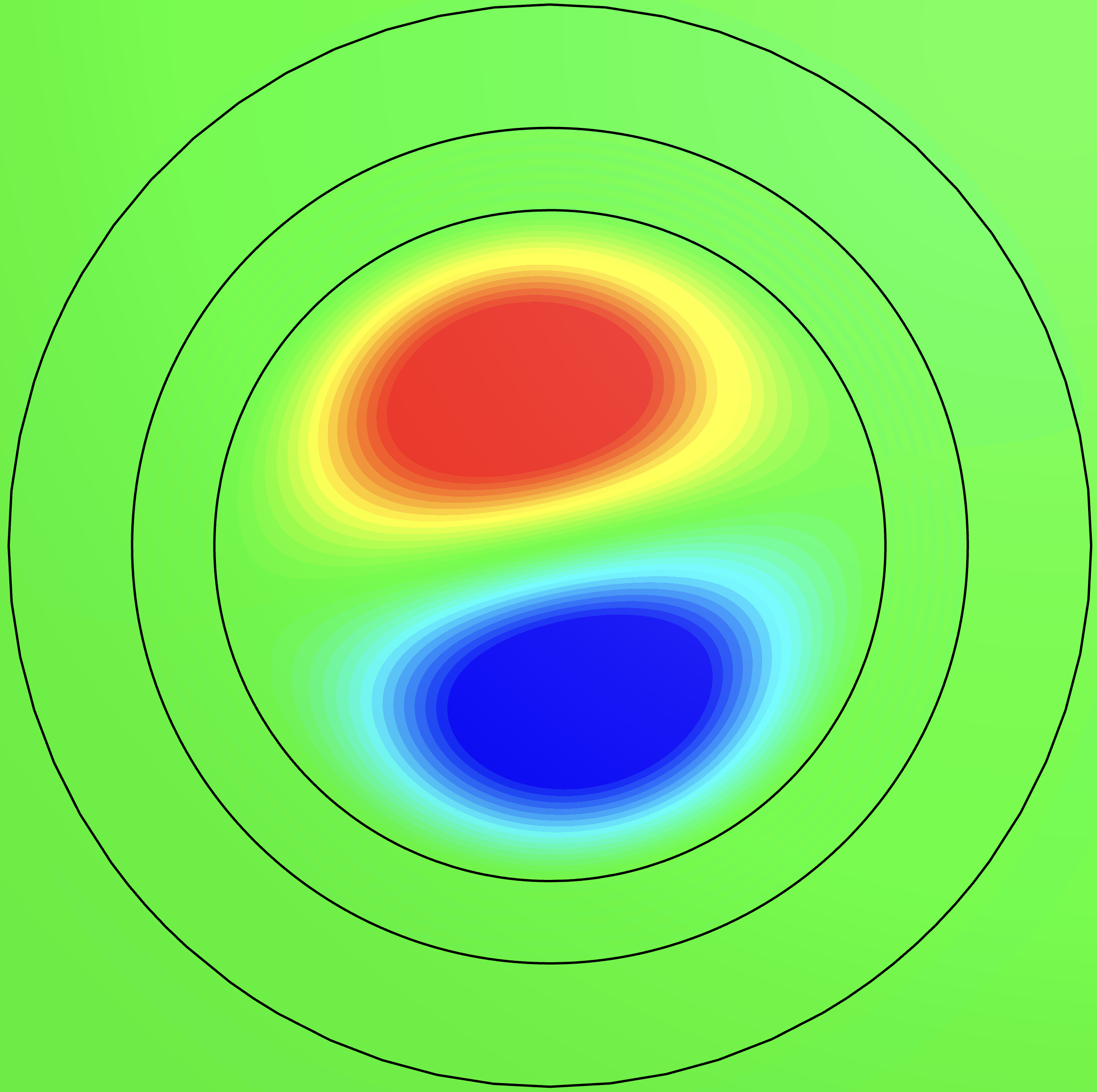}
        \caption{Scalar component of the first mode in the cluster.}
        \label{fig:mode1_bragg}
    \end{subfigure}%
    \hfill
    \begin{subfigure}[t]{0.48\textwidth}
        \centering
        \includegraphics[width=\textwidth]{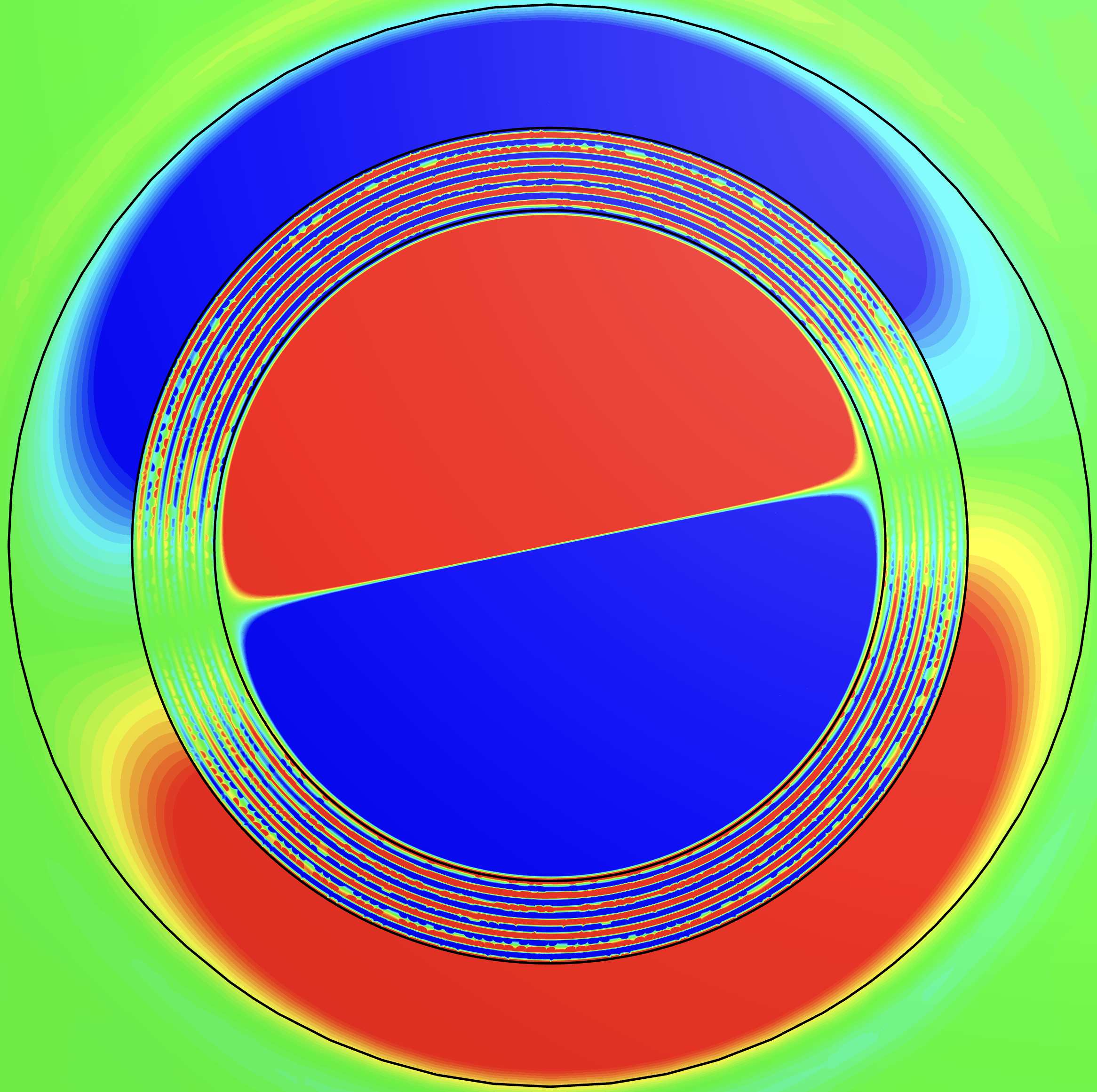}
        \caption{Rescaled scalar component of the first mode in the cluster.}
        \label{fig:mode1_rescaled_bragg}
    \end{subfigure}%
    \hfill
    \begin{subfigure}[t]{0.48\textwidth}
        \centering
        \includegraphics[width=\textwidth]{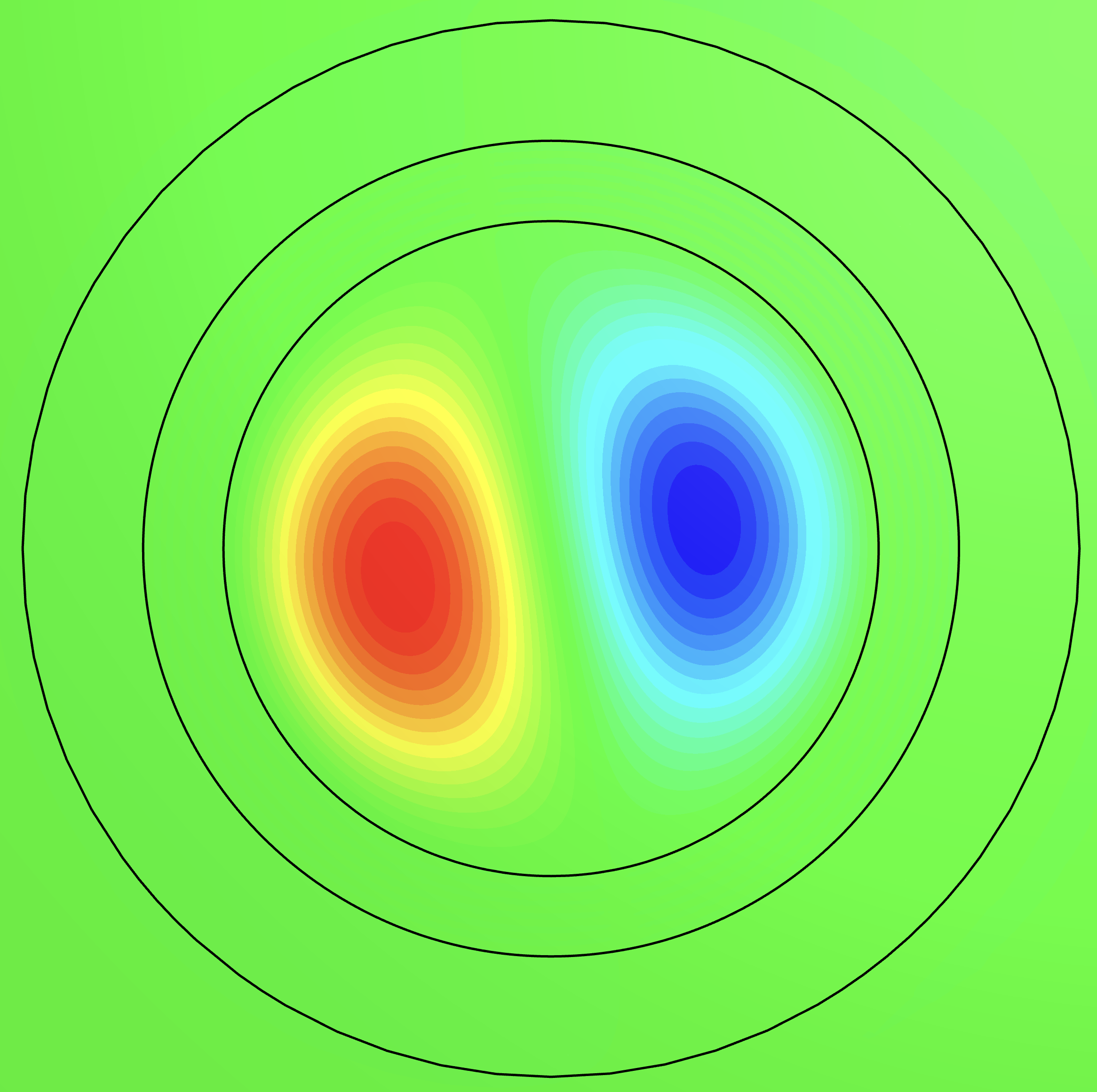}
        \caption{Scalar component of the second mode in the cluster.}
        \label{fig:mode2_bragg}
    \end{subfigure}%
    \hfill
    \begin{subfigure}[t]{0.48\textwidth}
        \centering
        \includegraphics[width=\textwidth]{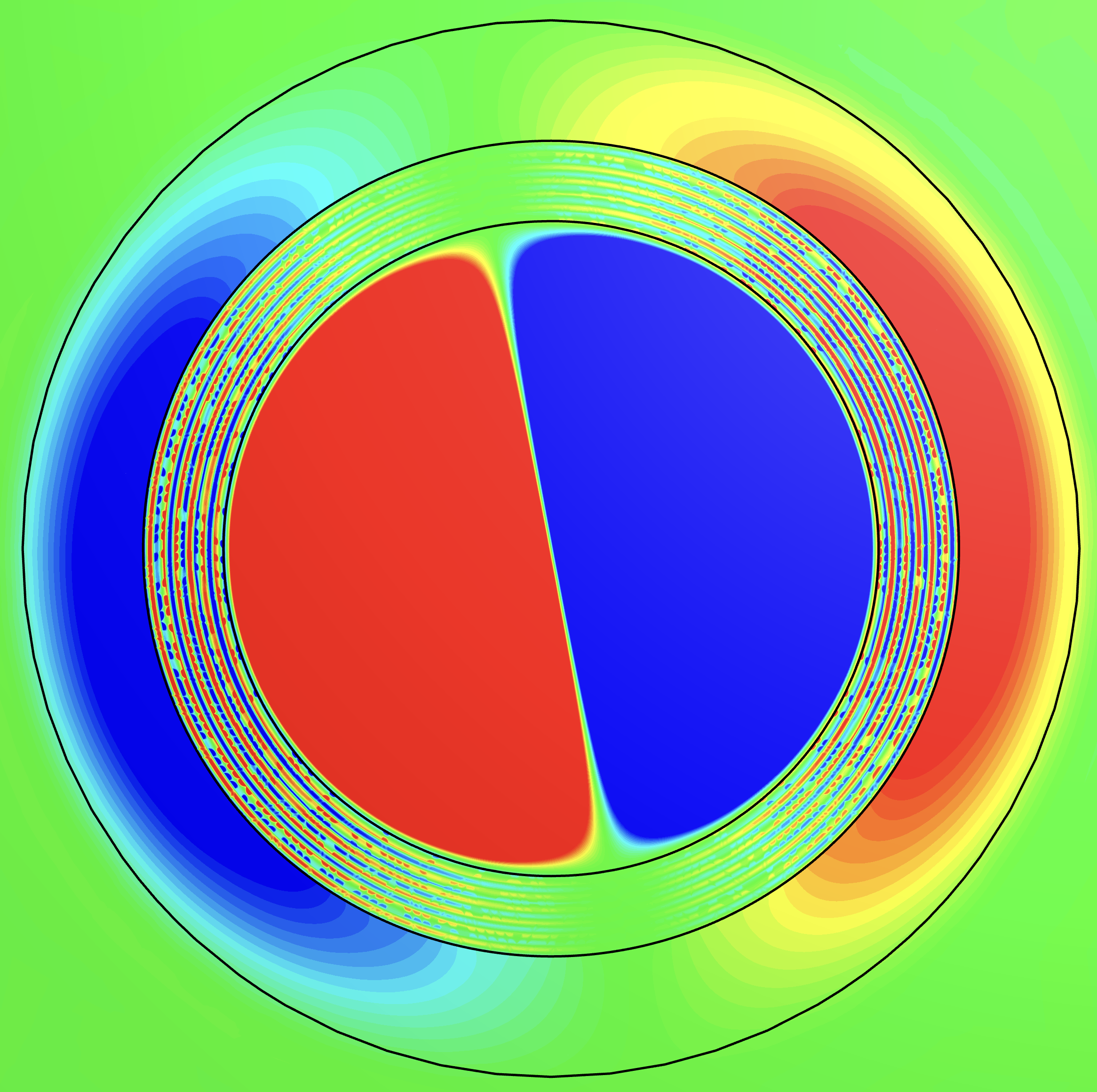}
        \caption{Rescaled scalar component of the second mode in the cluster.}
        \label{fig:mode2_rescaled_bragg}
    \end{subfigure}%
    \caption{
      Two Bragg eigenmodes of the computed cluster are shown in
      different color scales in the left column and right column. The color scale of right column is selected for better visualization of the 
      oscillatory features in the cladding region.
    }
    \label{fig:modes_bragg}
\end{figure}

\begin{figure}
  \centering
  \begin{tikzpicture}[scale=0.85]
    \centering
    \begin{loglogaxis}[
        cycle list name={one-by-two},
        xlabel={Number of \textsc{DOFs}},
        ylabel={Approximation error},
        legend columns=3,
        legend entries={%
            \(d_{\sf{dpg}}\),
            \(\eta_{\ell^2,\sf{dpg}}\),
          },
        height=10cm,                  
        table/x=ndofs,
    ]
      \addplot+[table/y=hausdorff_dist] table {bragg_dpg/lkm_dpg_adapt_results.csv};
      \addplot+[table/y=l2_eta] table {bragg_dpg/lkm_dpg_adapt_results.csv};


    \end{loglogaxis}
    \centering
  \end{tikzpicture}
  \caption{
    Hausdorff distance between the (approximated) eigenvalue
    cluster \(\Lambda = \{Z^2\}\) and its computed
    approximation, \(d_{\mathsf{dpg}}\),
    and \(\ell^2\)-norm of the DPG error estimator
    \(\eta_{\ell^2,\mathsf{dpg}}\), against the number of \textsc{dofs}.
  }
  \label{fig:bragg_dpg_convergence}
\end{figure}
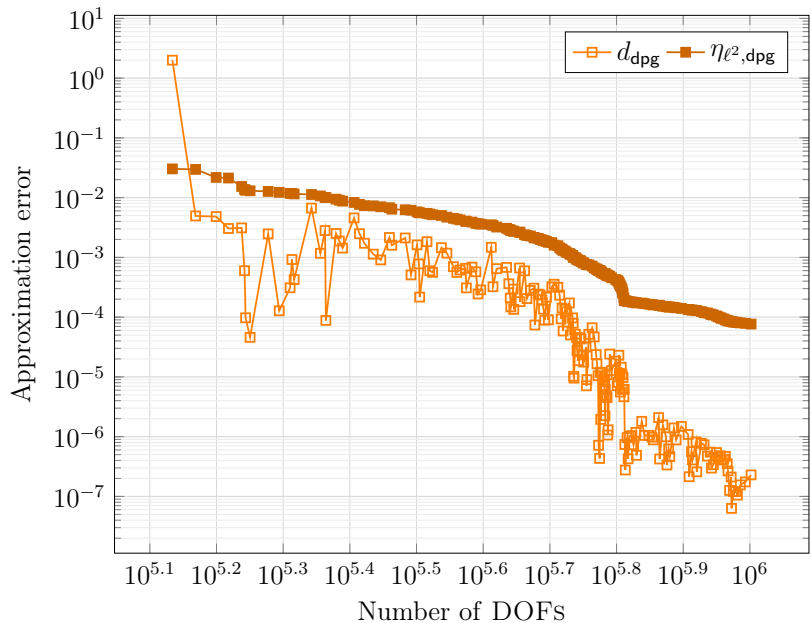



\subsection*{Acknowledgments}

This work was supported in part by the NSF grant DMS-2245077. 
It also benefited from activities organized under the auspices of NSF
RTG grant DMS-2136228.

\newpage 

\bibliographystyle{siam}
\bibliography{refs.bib}

\end{document}